\documentclass[11pt,leqno]{article}

\usepackage{amsfonts,latexsym,amsmath,amssymb,amsthm}
\usepackage{hyperref}
\usepackage{fullpage}
\usepackage{dsfont}
\newtheorem{theorem}{Theorem}[section]
\newtheorem{lemma}[theorem]{Lemma}
\newtheorem{proposition}[theorem]{Proposition}
\newtheorem{corollary}[theorem]{Corollary}
\newtheorem{remark}[theorem]{Remark}

\theoremstyle{definition}

\newtheorem*{note*}{Note}
\numberwithin{equation}{section}

\newcommand{\ls}{\leqslant}
\newcommand{\gr}{\geqslant}
\newcommand{\R}{\mathbb{R}}

\DeclareMathOperator{\vol}{vol}

\usepackage{setspace}
\begin{document}
\small

\title{\bf Geometry of the subgaussian body of an isotropic\\ convex body}

\author{Apostolos Giannopoulos, Minas Pafis and Natalia Tziotziou}

\date{}
\maketitle

\begin{abstract}\footnotesize 
For a centered convex body $K\subset\mathbb{R}^n$, let $\Psi_2(K)$ denote the symmetric convex body whose support function is given by the $\psi_2$-norms of 
linear functionals on $K$. The recent solution of Milman's problem on the existence of subgaussian directions by Letwin and Mikulincer 
naturally motivates the study of the geometry of this body.
We prove that $\Psi_2(K)$ has bounded volume ratio with respect to the $L_2$-centroid body $Z_2(K)$.
In the isotropic case, we also obtain sharp estimates for its mean width and the volume radii of its orthogonal projections, 
and derive consequences for the existence of subgaussian orthonormal bases. In particular, we construct orthonormal bases with quantitatively 
controlled subgaussian constants for every isotropic convex body.
\end{abstract}

%%%%%%%%%%%%%%%%%%%%%%%%%%%%%%%%%%%%%%%%%%%%%%%%%%%%%%%%%%%%%%%%%%%%%%%%%%%%%%%%%%%%%%%%%%%%%%%%%%%%%%%%%%%%%%%%%%%%%%%%%%%%%%%%%%%%%%%%%%%%%%%%%%%%%%%%%%%%%%%%%%%%%%%%%%%%
\section{Introduction}\label{section:1}
%%%%%%%%%%%%%%%%%%%%%%%%%%%%%%%%%%%%%%%%%%%%%%%%%%%%%%%%%%%%%%%%%%%%%%%%%%%%%%%%%%%%%%%%%%%%%%%%%%%%%%%%%%%%%%%%%%%%%%%%%%%%%%%%%%%%%%%%%%%%%%%%%%%%%%%%%%%%%%%%%%%%%%%%%%%%

Concentration properties of linear functionals on convex bodies play a central role in asymptotic convex geometry. In particular, directions along which linear functionals exhibit subgaussian behavior have been studied extensively since they provide strong control of moments and tail estimates. 

This motivates the following definition of a subgaussian direction. Let $K$ be a centered convex body in $\mathbb{R}^n$, that is, a convex body whose barycenter is the origin. Throughout the paper we assume that convex bodies
have volume $1$. A direction $v\in S^{n-1}$ is called {\it subgaussian} for $K$ with constant $b>0$ if
\begin{equation}\label{eq:subgaussian}\|\langle\cdot,v\rangle\|_{\psi_2}\ls b\,\|\langle\cdot,v\rangle\|_2.\end{equation}
Recall that the $\psi_2$-norm of a random variable $X$ on a probability space is defined by
\[\|X\|_{\psi_2}=\inf\left\{t>0:\mathbb E\exp\!\left((X/t)^2\right)\ls 2\right\}.\]
For bounded random variables $X$, 
\[\|X\|_{\psi_2}\approx\sup_{p\gr1}\frac{\|X\|_p}{\sqrt p}\approx\inf\left\{\alpha>0:\mathbb P(|X|>t)\ls2e^{-t^2/\alpha^2}\text{ for all }t\gr0\right\},\]
up to absolute constants. Uniform control of $\|X\|_p/\sqrt{p}$ is equivalent to a 
$\psi_2$-estimate, and hence to subgaussian tail decay (see, for example, \cite[Section~2.6]{Vershynin-book}). Throughout the paper, moments and 
$\psi_2$-norms of linear functionals are computed with respect to the uniform probability measure on $K$.

The following question was posed by V.~Milman.

\medskip

\noindent \textit{Question.} Does there exist an absolute constant $C>0$ such that every centered convex
body admits a subgaussian direction with constant $C$?

\medskip

Klartag~\cite{Klartag-2007} proved that every convex body admits a subgaussian direction, obtaining a polylogarithmic estimate for the constant in~\eqref{eq:subgaussian}. A different proof, yielding a slightly improved bound, was later given in~\cite{GPP07}. Subsequently, \cite{GPV11} showed that every convex body admits an $O(\sqrt{\ln(en)})$-subgaussian direction. Very recently, Letwin and Mikulincer~\cite{Letwin-Mikulincer-2026} answered Milman's question affirmatively. Their proof introduces several new ideas that play 
key roles in the present work.

While Milman's problem concerns the existence of a single subgaussian direction, its recent resolution naturally leads to the study of the geometry of the entire collection of such directions. This viewpoint is conveniently encoded in the \emph{subgaussian body}
$\Psi_2(K)$, defined as the symmetric convex body with support function
\[h_{\Psi_2(K)}(v)=\|\langle\cdot,v\rangle\|_{\psi_2}.\]
By definition, $\Psi_2(K)$ contains the ellipsoid $cZ_2(K)$, for an absolute constant $c>0$, where
\[h_{Z_2(K)}(v)=\|\langle\cdot,v\rangle\|_2.\]
It was proved in~\cite{GPV11} that
\[\left(\frac{{\rm vol}_n(\Psi_2(K))}{{\rm vol}_n(Z_2(K))}\right)^{1/n}\ls C\sqrt{\ln(en)},\]
where $C>0$ is an absolute constant.

Our first result strengthens this estimate by showing that the volume ratio of the subgaussian body with respect to $Z_2(K)$ 
is bounded by an absolute constant.

\begin{theorem}\label{th:vol-ratio-psi2-body}Let $K$ be a centered convex body in $\mathbb{R}^n$. Then
\[\left(\frac{{\rm vol}_n(\Psi_2(K))}{{\rm vol}_n(Z_2(K))}\right)^{1/n}\ls C,\]
where $C>0$ is an absolute constant.
\end{theorem}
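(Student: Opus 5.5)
We first reduce to the isotropic case. Both $\Psi_2(K)$ and $Z_2(K)$ are equivariant under linear maps: $\Psi_2(TK)=T\Psi_2(K)$ and $Z_2(TK)=TZ_2(K)$. Hence the ratio in the statement is invariant, and we may assume $K$ is isotropic, so that $Z_2(K)=L_KB_2^n$. Since ${\rm vol}_n(B_2^n)^{1/n}\approx 1/\sqrt n$, the claim becomes
\[{\rm vol}_n(\Psi_2(K))^{1/n}\ls C\,L_K/\sqrt n .\]
We prove this through the polar body. By the Bourgain--Milman inequality, it suffices to show
\[{\rm vol}_n(\Psi_2(K)^\circ)^{1/n}\gr c/(\sqrt n\,L_K).\]
Equivalently, in terms of volume radius, $\Psi_2(K)^\circ$ should contain a set of volume comparable to that of $\frac{c}{L_K}B_2^n$.

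The main step is to prove this lower bound by an averaging argument over Gaussian directions. The set $\{\theta:\ \mathbb E_x\exp(\langle x,\theta\rangle^2)\ls 2\}$ is contained in $\Psi_2(K)^\circ$, up to the normalization in the definition of the $\psi_2$-norm. Let $G$ be a standard Gaussian vector in $\R^n$ and put $\theta=sG/(\sqrt n L_K)$ with a small absolute constant $s$. Then $|\theta|\approx s/L_K$ with high probability. Fubini and the Gaussian computation $\mathbb E_G\exp(a\langle x,G\rangle^2)=(1-2a|x|^2)^{-1/2}$ give
\[\mathbb E_\theta\,\mathbb E_x \exp(\langle x,\theta\rangle^2)=\mathbb E_x\Bigl(1-\tfrac{2s^2|x|^2}{nL_K^2}\Bigr)^{-1/2}.\]
If $|x|\ls C\sqrt n L_K$ on all of $K$, this is at most $1+O(s^2)\ls 3/2$. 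Markov's inequality would then show that the good set occupies at least half of the Gaussian measure. Since that measure is concentrated on a ball of radius $\approx s/L_K$, we would get ${\rm vol}_n(\Psi_2(K)^\circ)^{1/n}\gr c/(\sqrt nL_K)$ via a comparison of Gaussian measure and volume (Gaussian density bound on that ball).

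The obstacle is that $K$ is not contained in $C\sqrt nL_K B_2^n$: points of $K$ can have norm of order $nL_K$, and the Gaussian average above blows up on them. The natural fix is to decompose $\mathbb E_x$ into the thin-shell part $|x|\ls C\sqrt nL_K$ and the tail. On the tail, Paouris' deviation inequality gives $\mathbb P(|x|\gr t\sqrt nL_K)\ls e^{-ct\sqrt n}$. Near the critical radius we would replace the crude Gaussian average by one-dimensional estimates: for fixed $x$, $\exp(\langle x,\theta\rangle^2)$ is huge only for $\theta$ in a slab of Gaussian measure roughly $\exp(-n/(s^2t^2))$. We would balance this against the Paouris tail, and then use Markov's inequality per level set after truncating $\exp(\cdot)$ at level $2$.

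If the truncation still loses, the fallback is to use $h_{\Psi_2(K)}(\theta)\approx\sup_{p\ls n}h_{Z_p(K)}(\theta)/\sqrt p$ together with Paouris' bound ${\rm vol}_n(Z_p(K))^{1/n}\ls C\sqrt{p/n}\,L_K$. We would then control the convex hull of the dyadic family $Z_{2^k}(K)/2^{k/2}$ by the same Gaussian-sampling argument, applied scale by scale on $\Psi_2(K)^\circ=\bigcap_k \sqrt{p_k}\,Z_{p_k}(K)^\circ$. This is where the ideas of Letwin--Mikulincer would be needed, to avoid the $\sqrt{\ln n}$ loss coming from the union over scales.
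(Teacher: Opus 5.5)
Your reduction to the isotropic case and to a lower bound ${\rm vol}_n(\Psi_2(K)^\circ)^{1/n}\gr c/(\sqrt n L_K)$ is fine, with one correction. The inequality that does the work is Blaschke--Santal\'o, ${\rm vol}_n(\Psi_2(K))\,{\rm vol}_n(\Psi_2(K)^\circ)\ls\omega_n^2$. Bourgain--Milman bounds this product from below, so it points the wrong way.

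The genuine gap is in the main step. You try to show that $\theta=sG/(\sqrt n L_K)$ lies in $\Psi_2(K)^\circ$ with probability at least $1/2$, i.e.\ that $h_{\Psi_2(K)}(G)\ls\sqrt n L_K/s$ for at least half of all Gaussian vectors. This is false in general, so no thin-shell/Paouris-tail bookkeeping or truncation can rescue it.

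For the isotropic cross-polytope $K_1$ one has $h_{\Psi_2(K_1)}(v)\approx\sqrt n\|v\|_\infty$. Hence $h_{\Psi_2(K_1)}(G)\approx\sqrt n\|G\|_\infty\approx\sqrt{n\ln n}$ with high probability, and the event $\|G\|_\infty\ls C/s$ has probability at most $e^{-c(s)n}$. More generally, a median bound $h_{\Psi_2(K)}(G)\lesssim\sqrt n$, combined with the $C\sqrt n$-Lipschitz property of $h_{\Psi_2(K)}$ and Gaussian concentration, would force $w(\Psi_2(K))=O(1)$; but $w(\Psi_2(K_1))\approx\sqrt{\ln n}$. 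The divergence of $\mathbb E_G\exp(\langle x,\theta\rangle^2)$ for $|x|\gtrsim\sqrt nL_K$ is a symptom of this, not a technical nuisance.

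What is true, and is all the volume comparison needs (the Gaussian density is at most $(2\pi)^{-n/2}$), is that the good set has Gaussian measure at least $e^{-Cn}$. An exponentially small lower bound for a rare event cannot come out of Fubini and Markov. It needs a product structure, and that is the idea your proposal is missing. The paper's argument runs as follows.
\begin{itemize}
\item It writes $\Psi_2(K)^\circ\approx\frac{1}{C_0\sqrt nL_K}\bigcap_{k=0}^{k_0}A_{2^k}$, with $A_p=C_0\sqrt{np}\,L_K\,Z_p(K)^\circ$ and $k_0=\lceil\log_2(c_0n)\rceil$. Scales above $c_0n$ are absorbed via $Z_n(K)\subseteq\frac{c}{c_0}Z_{c_0n}(K)$.
\item It uses Lemma~\ref{lem:2.6}, $\gamma_n(A_p)\gr e^{-C_1p}$. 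This is a consequence of the negative-moment equivalence $w_{-p}(Z_p(K))\approx w_{-2p}(Z_p(K))\approx\sqrt p\,L_K$ of Proposition~\ref{prop:2.4}, which rests on Bizeul's small-ball estimate.
\item The Gaussian correlation inequality then gives $\gamma_n\bigl(\bigcap_kA_{2^k}\bigr)\gr\prod_k e^{-C_12^k}\gr e^{-C_2n}$, because the exponents $2^k$ sum to $O(n)$.
\end{itemize}

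Your fallback does reach the intersection $\bigcap_k\sqrt{p_k}\,Z_{p_k}(K)^\circ$, but its proposed inputs do not close the argument.
\begin{itemize}
\item Individual volume bounds for $Z_p(K)/\sqrt p$ do not control the volume of their convex hull.
\item There is no ``union over scales'' to optimize. For large $p$ the events $\{\theta\in A_p\}$ themselves have exponentially small probability, so a union bound is not applicable at all.
\item What is required is positive correlation (Royen's theorem), combined with per-scale Gaussian measure bounds of the precise order $e^{-Cp}$.
\end{itemize}
Deferring exactly this step to ``the ideas of Letwin--Mikulincer'' leaves the core of the proof unaddressed.
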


Theorem~\ref{th:vol-ratio-psi2-body} and a standard argument imply the existence of a direction
$v\in S^{n-1}$ satisfying $h_{\Psi_2(K)}(v)\ls  Ch_{Z_2(K)}(v)$; equivalently,
\[\|\langle\cdot,v\rangle\|_{\psi_2}\ls C\,\|\langle\cdot,v\rangle\|_2.\]
Thus, Theorem~\ref{th:vol-ratio-psi2-body} provides another proof of the
affirmative answer to Milman's question. More importantly, however, it yields
a substantially stronger geometric statement by controlling the global size of
the body $\Psi_2(K)$ rather than merely guaranteeing the existence of a single
subgaussian direction.

Letwin and Mikulincer further proved in~\cite{Letwin-Mikulincer-2026}
that every centered convex body $K$ in $\mathbb{R}^n$
admits an orthonormal set
$\Theta=\{v_1,\ldots ,v_m\}\subset S^{n-1}$ with $|\Theta|=m\gr \frac{9n}{10}$ such that
\[c\sqrt p\,\|\langle\cdot,v\rangle\|_2\ls\|\langle\cdot,v\rangle\|_p\ls C\sqrt p\,\|\langle\cdot,v\rangle\|_2,\]
for every $v\in \Theta$, where the upper estimate holds for all $p\gr1$,
while the lower estimate is valid for $1\ls p\ls n$.
They also conjectured that, if $K$ is isotropic, then one may choose
$\Theta$ to be a complete orthonormal basis satisfying
\[c\sqrt p\ls\|\langle\cdot,v\rangle\|_p\ls C\sqrt p,\]
where again the upper estimate holds for all $p\gr1$, while the lower
estimate holds for $1\ls p\ls n$. In this case, $\|\langle\cdot,v\rangle\|_2=L_K\approx 1$ for every $v\in S^{n-1}$ (see Section~\ref{section:2} for
background information on isotropic convex bodies and the isotropic constant). Note that the upper estimate is equivalent to
\[\|\langle\cdot,v\rangle\|_{\psi_2}\ls C.\]

Our next theorem provides a quantitative partial answer to this conjecture.

\begin{theorem}\label{th:onb-psi2}
Let $K$ be an isotropic convex body in $\mathbb{R}^n$. Then there exists
an orthonormal basis $\{v_1,\ldots,v_n\}$ such that
\[\|\langle\cdot,v_k\rangle\|_{\psi_2}\ls C\sqrt{\frac{n}{\,n-k+1\,}}, \qquad 1\ls k\ls n,\]
where $C>0$ is an absolute constant.
\end{theorem}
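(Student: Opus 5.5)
The approach is a greedy construction driven by volume estimates for sections or projections of $\Psi_2(K)$. The abstract announces sharp bounds for the volume radii of orthogonal projections of $\Psi_2(K)$ in the isotropic case, and I would use them in the following form: for every $k$-dimensional subspace $F$,
\[
{\rm vr}(P_F\Psi_2(K)) := \left(\frac{{\rm vol}_k(P_F\Psi_2(K))}{{\rm vol}_k(B_2^k)}\right)^{1/k}\ls C\sqrt{\frac{n}{k}} .
\]
This is the natural generalization of Theorem~\ref{th:vol-ratio-psi2-body}. When $k=n$ it says exactly that $\Psi_2(K)$ has bounded volume radius, since $Z_2(K)=L_KB_2^n$ and $L_K\ls C$. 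For small $k$ the factor $\sqrt{n/k}$ is consistent with the trivial bound $\Psi_2(K)\subseteq C\sqrt n B_2^n$.

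To establish this bound I would pass to the dual. Fix $F$ with $\dim F=k$. We have $h_{P_F\Psi_2(K)}(v)=\|\langle\cdot,v\rangle\|_{\psi_2}$ for $v\in F$, and this is dominated by $\sup_{p\gr 1}\|\langle\cdot ,v\rangle\|_p/\sqrt p$. The idea is to compare with the centroid body $Z_p(\pi_F\mu_K)$ of the marginal of $\mu_K$ on $F$, taking $p\approx k$. The steps are:
\begin{itemize}
\item For $p\gr k$ one has $h_{Z_p}(v)/\sqrt p\ls \frac{p}{k}\, h_{Z_k}(v)/\sqrt p$, using $\|\cdot\|_p\ls (p/q)\|\cdot\|_q$ from Borell's lemma. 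This must be combined with $\psi_1$-control to handle the large moments.
\item For the range $p\ls k$, the known estimate ${\rm vr}(Z_p(\pi_F\mu_K))\ls C\sqrt p$ (Paouris, valid for $p\ls k$) bounds each body individually.
\item The supremum over $p$ has to be controlled as a single convex body. I would use the Letwin--Mikulincer idea, presumably the one behind Theorem~\ref{th:vol-ratio-psi2-body} applied to the $k$-dimensional log-concave marginal.
\end{itemize}
Applying Theorem~\ref{th:vol-ratio-psi2-body} to the marginal gives ${\rm vr}(\Psi_2(\pi_F\mu_K))\ls C\,{\rm vr}(Z_2(\pi_F\mu_K))=CL_K$. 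The difficulty is that this is the $\psi_2$ body of the marginal density, not of a convex body of volume $1$. After normalization the $\psi_2$-norms are the same, so one gets a bounded volume radius for $P_F\Psi_2(K)$. That is even better than $\sqrt{n/k}$, so at worst the weaker bound holds. I would verify that Theorem~\ref{th:vol-ratio-psi2-body} extends to centered log-concave measures, which is standard.

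Given the projection bound, or rather its consequence for sections via the inclusion $\Psi_2(K)\cap F\subseteq P_F\Psi_2(K)$, I build the basis backwards. Set $E_n=\R^n$. Suppose $E_j$ has dimension $j$. A volume argument shows that the body $P_{E_j}\Psi_2(K)$, whose volume radius is at most $C\sqrt{n/j}$, cannot have support function exceeding $C'\sqrt{n/j}$ on every unit vector of $E_j$. Indeed, a symmetric convex body with $h\gr R$ everywhere contains $RB_2^j$, since its polar is contained in $R^{-1}B_2^j$. So we can pick $u\in S(E_j)$ with
\[
\|\langle\cdot,u\rangle\|_{\psi_2}=h_{P_{E_j}\Psi_2(K)}(u)\ls C\sqrt{n/j},
\]
and set $E_{j-1}=E_j\cap u^\perp$. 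Labelling the vector chosen at stage $j$ as $v_{n-j+1}$ gives $\|\langle\cdot,v_k\rangle\|_{\psi_2}\ls C\sqrt{n/(n-k+1)}$, and the chosen vectors are orthonormal by construction.

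The main obstacle is the uniform projection estimate for every subspace, and in particular the normalization issues in applying Theorem~\ref{th:vol-ratio-psi2-body} to marginals. The greedy step itself is routine.
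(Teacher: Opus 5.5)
Your greedy construction is the same as the paper's, including the observation that a body with $h\gr R$ on the sphere contains $RB_2^j$. So everything rests on the projection estimate, and your argument for it fails where you apply Theorem~\ref{th:vol-ratio-psi2-body} to the marginal.

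It is true that $P_F\Psi_2(K)$ is exactly the $\psi_2$-body of $\pi_F(\lambda_K)$ for the genuine $\psi_2$-norm, since $\langle x,v\rangle=\langle P_Fx,v\rangle$ for $v\in F$. But the volume-ratio theorem does not extend to log-concave measures with the genuine $\psi_2$-norm. The conclusion you draw from it, a bounded volume radius for every projection ``even better than $\sqrt{n/k}$'', is false. For the isotropic cross-polytope $K_1$ and $F={\rm span}\{e_1\}$, one has $\|\langle\cdot,e_1\rangle\|_2\approx 1$ but $\|\langle\cdot,e_1\rangle\|_{\psi_2}\approx\sqrt n$, so ${\rm vrad}(P_F\Psi_2(K_1))\approx\sqrt n$. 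More generally, \eqref{eq:sharp} shows that $\sqrt{n/k}$ is attained for every $k$. Your claim with $k=1$ would make every direction of every isotropic body $C$-subgaussian. The underlying reason is that for a convex body in a $k$-dimensional space the moments $\|\langle\cdot,v\rangle\|_p$ stabilize once $p\gr k$. For a $k$-dimensional marginal of an $n$-dimensional body they may instead keep growing like $p$ up to $p\approx n$, as in the cross-polytope.

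The missing step is to split the two ranges of $p$, as the paper does.
\begin{itemize}
\item By \eqref{eq:convex-hull} and \eqref{eq:marginal-Zq}, $P_F\Psi_2(K)\approx{\rm conv}\{L_KZ_p(\pi_F\mu_K)/\sqrt p:\ p=2^j\ls n\}$. Only $p\ls n$ enters because of \eqref{eq:stable}, so no $\psi_1$-control is needed.
\item For $k<p\ls n$, \eqref{eq:zq-mu-inclusions} in dimension $k$ gives $Z_p(\pi_F\mu_K)\subseteq c\frac pk Z_k(\pi_F\mu_K)$. Hence $Z_p/\sqrt p\subseteq c\sqrt{n/k}\,Z_k/\sqrt k$, and so $P_F\Psi_2(K)\subseteq c\sqrt{n/k}\,L_K\Psi_2(\nu)$ with $\nu=\pi_F\mu_K$. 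Here $\Psi_2(\nu)$ is the \emph{truncated} body with $h_{\Psi_2(\nu)}(v)=\sup_{1\ls q\ls k}\|\langle\cdot,v\rangle\|_{L_q(\nu)}/\sqrt q$.
\item Only this truncated body satisfies the measure version of the volume-ratio bound (Theorem~\ref{th:vol-ratio-psi2-body-measure}). Proposition~\ref{prop:Zq-control} gives a convex body $T\subset F$ with $Z_q(\nu)\subseteq c_2Z_q(T)$ for $1\ls q\ls k$, hence $\Psi_2(\nu)\subseteq c_2\Psi_2(T)$. Theorem~\ref{th:vol-ratio-psi2-body} for $T$, together with $L_T\ls C$, then yields ${\rm vrad}(\Psi_2(\nu))\ls C$.
\end{itemize}
So the factor $\sqrt{n/k}$ is not slack to be discarded; it is precisely the cost of the range $k<p\ls n$. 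Your first bullet had the right inclusion, but the argument you actually wrote never uses it.
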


The proof of Theorem~\ref{th:onb-psi2} is based on the projection estimate below, which is also of independent interest.

\begin{theorem}\label{th:vk-psi2-body}
Let $K$ be an isotropic convex body in $\mathbb{R}^n$. For every $1\ls m\ls n$ and any $F\in G_{n,m}$,
\begin{equation*}{\rm vrad}(P_F(\Psi_2(K)))\ls C\sqrt{n/m},\end{equation*} 
where $C>0$ is an absolute constant. 
\end{theorem}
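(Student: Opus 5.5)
Since $K$ is isotropic, $Z_2(K)=L_KB_2^n$, so the claim says that every $m$-dimensional projection of $\Psi_2(K)$ has volume radius at most $C\sqrt{n/m}$. The plan is to compare $P_F(\Psi_2(K))$ with the $L_p$-centroid bodies and to exploit their good behaviour under projection. For $v\in F\cap S^{n-1}$ we have $h_{P_F(\Psi_2(K))}(v)=\|\langle\cdot,v\rangle\|_{\psi_2}\approx\sup_{p\gr 1}\|\langle\cdot,v\rangle\|_p/\sqrt p$. The $p$-th moments of $\langle\cdot,v\rangle$, $v\in F$, are exactly the moments of the marginal $\pi_F$ of the uniform measure on $K$, which is an isotropic log-concave measure on $F$. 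Hence $P_F(Z_p(K))=Z_p(\pi_F)$ and, at least for $p$ large, $\|\langle \cdot,v\rangle\|_p\approx\|\langle\cdot,v\rangle\|_{n}$.

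\textbf{Step 1 (splitting by $p$).} For $p\ls m$, I would use $P_F(\Psi_2(K))\subseteq C\,\mathrm{conv}\bigl(\bigcup_{p\ls m}Z_p(\pi_F)/\sqrt p\bigr)+\ldots$. The range $p\ls m$ is exactly what Theorem~\ref{th:vol-ratio-psi2-body} controls, applied in dimension $m$ to the log-concave measure $\pi_F$. I expect Theorem~\ref{th:vol-ratio-psi2-body} to extend to log-concave measures, or at least to its truncated version $\Psi_2^{(m)}$ with $\sup_{p\ls m}$. Applied to $\pi_F$, it gives a volume radius at most $C$ for the part with $p\ls m$.

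For $p\gr m$, I would use the bound $Z_p(\pi_F)\subseteq Z_n(\pi_F)\subseteq P_F(Z_n(K))\subseteq P_F(cnK')$; more concretely, $Z_p(K)\subseteq c\,\frac{p}{q}Z_q(K)$ for $p\gr q$. This gives
\[
\frac{Z_p(\pi_F)}{\sqrt p}\subseteq c\sqrt{\frac{p}{m}}\,\frac{Z_m(\pi_F)}{\sqrt{m}}\cdot\frac{\sqrt m}{\sqrt p}\cdots
\]
That crude route loses too much, so instead I would bound directly $\|\langle\cdot,v\rangle\|_p\ls \|\langle\cdot,v\rangle\|_\infty\ls h_K(v)$, and use $\|\langle \cdot,v\rangle\|_n\approx h_K(v)$. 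Then $\sup_{p\gr m}\|\cdot\|_p/\sqrt p\ls \sup_{m\ls p\ls n}\|\cdot\|_p/\sqrt p$ up to constants, and $\|\cdot\|_p\ls (p/m)\|\cdot\|_m$ yields
\[
\sup_{m\ls p\ls n}\frac{\|\cdot\|_p}{\sqrt p}\ls \sup_{m\ls p\ls n}\frac{\sqrt p}{m}\|\cdot\|_m\ls \frac{\sqrt n}{m}\|\cdot\|_m .
\]
Therefore this part is contained in $\frac{\sqrt n}{\sqrt m}\cdot\frac{Z_m(\pi_F)}{\sqrt m}$.

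\textbf{Step 2 (volume).} The convex hull of two bodies $A,B$ satisfies $\mathrm{conv}(A\cup B)\subseteq A+B$. Its volume radius can then be bounded by Rogers--Shephard/Brunn-type estimates, or more simply by the fact that $\mathrm{vrad}(A+B)\ls$ a constant times the maximum of the mean widths. Mean width may be too large, so I would prefer the estimate $\mathrm{vrad}(\mathrm{conv}(A\cup B))\ls C\max(\mathrm{vrad}(A),\mathrm{vrad}(B))$, valid when both contain $cB_2$, via polarity and Santaló/Bourgain--Milman. The key input is the known bound $\mathrm{vrad}(Z_m(\pi_F))\approx \sqrt m\,L_{\pi_F}$-free, namely $|Z_m(\mu)|^{1/m}\approx\sqrt{m/m}$: for an isotropic log-concave $\mu$ on $\R^m$, $\mathrm{vrad}(Z_m(\mu))\approx \sqrt m$ up to the isotropic constant, which is now known to be bounded by Klartag's/Bizeul's solution of the slicing problem. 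This gives $\mathrm{vrad}\bigl(\frac{\sqrt n}{m}Z_m(\pi_F)\bigr)\ls C\sqrt{n/m}$.

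\textbf{Main obstacle.} I expect the hardest part to be justifying the volume estimate for the convex hull of two bodies, or equivalently handling the supremum over $p$ uniformly. I would first try polarity: $\mathrm{conv}(A\cup B)^\circ=A^\circ\cap B^\circ$. Combined with the Bourgain--Milman inequality, this reduces the claim to a lower bound on $\mathrm{vol}(A^\circ\cap B^\circ)$. That lower bound would follow from the $\psi_2$-type volume estimate in Theorem~\ref{th:vol-ratio-psi2-body} if its proof in fact bounds $\mathrm{vol}(\Psi_2^\circ)$ from below.
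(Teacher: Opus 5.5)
Your Step~1 is the paper's own argument. The paper also writes $P_F(\Psi_2(K))$ as a convex hull of the bodies $Z_p(\pi_F)/\sqrt p$, using the projection identity for centroid bodies. For $p>m$ it uses exactly $\frac{Z_p(\pi_F)}{\sqrt p}\subseteq c\sqrt{p/m}\,\frac{Z_m(\pi_F)}{\sqrt m}\subseteq c\sqrt{n/m}\,\frac{Z_m(\pi_F)}{\sqrt m}$. The ``crude route'' you dismissed therefore loses nothing, and your computation with $\|\cdot\|_p\ls (cp/m)\|\cdot\|_m$ reproduces it. The measure version of Theorem~\ref{th:vol-ratio-psi2-body} that you anticipate is also used in the paper. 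It is proved separately as Theorem~\ref{th:vol-ratio-psi2-body-measure}, by comparing $Z_q(\mu)$ with $Z_q$ of Ball's body $T_\mu$.

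The gap is in Step~2. The principle $\mathrm{vrad}(\mathrm{conv}(A\cup B))\ls C\max\{\mathrm{vrad}(A),\mathrm{vrad}(B)\}$ for bodies containing $cB_2$ is false. In the plane, take $A=[-R,R]\times[-1,1]$ and $B=[-1,1]\times[-R,R]$. Both contain $B_2^2$ and have volume radius of order $\sqrt R$. But $\mathrm{conv}(A\cup B)$ contains the square with vertices $(\pm R,0),(0,\pm R)$, whose volume radius is of order $R$. Polarity does not rescue this: Santal\'o and Bourgain--Milman relate a body only to its own polar, and give no lower bound for $\mathrm{vol}(A^\circ\cap B^\circ)$ in terms of $\mathrm{vol}(A^\circ)$ and $\mathrm{vol}(B^\circ)$. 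The mean-width (Urysohn) route loses a factor $\sqrt{\ln m}$ in general; the cross-polytope already shows this. So as written Step~2 does not close, and your polar fallback is left conditional.

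The missing observation is that your two pieces are nested, which makes any convex-hull volume estimate unnecessary.
\begin{itemize}
\item On the $m$-dimensional space $F$, the body $\Psi_2(\pi_F)$ is defined with $\sup_{1\ls q\ls m}$, which includes $q=m$. Hence $Z_m(\pi_F)/\sqrt m\subseteq\Psi_2(\pi_F)$.
\item Your $p\gr m$ piece therefore satisfies $\frac{\sqrt n}{m}Z_m(\pi_F)\subseteq\sqrt{n/m}\,\Psi_2(\pi_F)$.
\item The $p\ls m$ piece lies in $\Psi_2(\pi_F)\subseteq\sqrt{n/m}\,\Psi_2(\pi_F)$, simply because $n\gr m$.
\end{itemize}
It follows that $P_F(\Psi_2(K))\subseteq c\sqrt{n/m}\,\Psi_2(\pi_F)$, absorbing $L_K\approx 1$. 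The single bound $\mathrm{vrad}(\Psi_2(\pi_F))\ls C$ then finishes the proof. This is precisely the paper's argument, and it does not need the input $\mathrm{vrad}(Z_m(\pi_F))\approx\sqrt m$ either.
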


The case $m=1$ shows that $\Psi_2(K)\subseteq C\sqrt{n}B_2^n$. The estimates of Theorem~\ref{th:vk-psi2-body} are sharp. Indeed, let
\[K_1={\rm vol}_n(B_1^n)^{-1/n}B_1^n,\]
the isotropic cross-polytope. In this case, as shown in \cite{Bobkov-Nazarov-2003}, 
\[\Psi_2(K_1)\approx\sqrt n\,B_1^n.\]
If $F_k$ is a coordinate $k$-dimensional subspace, then
\begin{align}\label{eq:sharp}
\max\bigl\{{\rm vrad}(P_F(\Psi_2(K_1))):\,F\in G_{n,k}\bigr\}
&\gr {\rm vrad}(P_{F_k}(\Psi_2(K_1)))\\
\nonumber &\approx \sqrt n\,{\rm vrad}(B_1^k)\approx\sqrt{\frac nk}.
\end{align}
In fact, Ivanov~\cite{Ivanov-2026} recently confirmed the conjecture that
among all $k$-dimensional projections of the cross-polytope, the coordinate
ones have maximal volume. Consequently, the first inequality in
\eqref{eq:sharp} is actually an equality.

Using the recent optimal $M$-estimate of Bizeul \cite{Bizeul-M-2026} for isotropic convex bodies,
we obtain a sharp upper bound for the mean width of the subgaussian body.

\begin{theorem}\label{th:mean-width}Let $K$ be an isotropic convex body in ${\mathbb{R}}^n$. Then,
\[w(\Psi_2(K))\ls C\sqrt{\ln(en)},\]
where $C>0$ is an absolute constant.
\end{theorem}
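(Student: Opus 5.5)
We bound the mean width $w(\Psi_2(K))=\int_{S^{n-1}}\|\langle\cdot,\theta\rangle\|_{\psi_2}\,d\sigma(\theta)$ directionwise, by relating the $\psi_2$-norm to the growth of the $L_p$-centroid bodies. Since $\|X\|_{\psi_2}\approx\sup_{p\gr1}\|X\|_p/\sqrt p$, and for $p\gr n$ the moments stabilise, namely $Z_p(K)\approx Z_n(K)\supseteq c\,{\rm conv}(K,-K)$ by Paouris, we have for every $\theta$
\[
h_{\Psi_2(K)}(\theta)\approx\sup_{1\ls p\ls n}\frac{h_{Z_p(K)}(\theta)}{\sqrt p}.
\]
The supremum over $p\gr n$ is dominated by $p=n$ up to constants, because $h_{Z_p}(\theta)\ls h_K(\theta)\approx h_{Z_n}(\theta)$. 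The plan is therefore to control the average over $\theta\in S^{n-1}$ of a supremum of the $\log n$-many quantities $h_{Z_{2^j}(K)}(\theta)/2^{j/2}$, $j\ls\log_2 n$. By log-concavity, $h_{Z_p}$ is within constant factors of $h_{Z_{2^j}}$ for $2^j\ls p\ls 2^{j+1}$.

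The key input is Bizeul's optimal $M$-estimate for isotropic convex bodies, which I take in the form $w(Z_p(K))\ls C\sqrt p$ for $1\ls p\ls n$ (equivalently, $M$-type bounds for $K$ whose dual form controls $w(Z_p)$). This already gives, for each fixed $j$, that the average of $h_{Z_{2^j}}(\theta)/2^{j/2}$ is $O(1)$. A naive union over $\log n$ scales would give $\log n$, so we need concentration on the sphere instead. The function $\theta\mapsto h_{Z_p}(\theta)$ is Lipschitz with constant $R(Z_p)=\max_\theta h_{Z_p}(\theta)\ls Cp$, using $\|\langle\cdot,\theta\rangle\|_p\ls Cp\|\langle\cdot,\theta\rangle\|_2$ by Borell's lemma. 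Hence $f_j(\theta)=h_{Z_{2^j}}(\theta)/2^{j/2}$ is $C2^{j/2}$-Lipschitz with mean $O(1)$, and by Lévy's concentration on $S^{n-1}$,
\[
\sigma\bigl(f_j>C_1+t\bigr)\ls 2\exp\bigl(-c\,n t^2/2^{j}\bigr).
\]

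We then estimate $\int\max_j f_j\,d\sigma\ls C_1+\int_0^\infty\sigma(\max_j f_j>C_1+t)\,dt$ and sum the tails over $j$. For scale $j$ the tail is Gaussian with variance parameter $2^j/n\ls1$, so $\mathbb E\max_j(f_j-C_1)_+\ls C\sqrt{\ln(\#\text{scales})}\cdot\max_j\sqrt{2^j/n}\ls C\sqrt{\ln\ln n}$. A cruder alternative bound is $C\sqrt{\ln n}$, coming from a Gaussian maximum of $\log n$ variables each with parameter at most $1$. Either way we get $w(\Psi_2(K))\ls C\sqrt{\ln(en)}$ as required. One can be more careful by weighting the scales, noting that only $2^j$ close to $n$ have non-negligible tails, but the stated bound only needs the crude version.

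The main obstacle is the first step: making sure Bizeul's $M$-estimate really yields $w(Z_p(K))\ls C\sqrt p$ uniformly for $p\ls n$ in the isotropic case, with no extra logarithmic factor. If it only gives the mean width of $Z_n(K)\approx{\rm conv}(K,-K)$, namely $w(K)\ls C\sqrt n$ up to the correct order, then I would interpolate. We have $w(Z_p)\ls C\sqrt p$ for $p\ls\sqrt n$ by known results (Paouris), and for larger $p$ we use $Z_p\subseteq C(p/q)Z_q$-type inclusions together with the $p=n$ estimate. Any residual loss can then be absorbed into the $\sqrt{\ln n}$ allowed by the concentration step. Finally, the lower end $p\approx1$ is trivial, since $Z_2(K)=L_KB_2^n$ with $L_K\approx1$.
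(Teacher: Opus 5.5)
There is a genuine gap, and it is in the step you flagged: the input $w(Z_p(K))\ls C\sqrt{p}$ for all $1\ls p\ls n$ is false. Bizeul's optimal estimate is $w(K)\ls C\sqrt{n\ln(en)}$, and the logarithm cannot be removed. For the isotropic cross-polytope $K_1\approx\frac{n}{2e}B_1^n$ one has $h_{Z_n(K_1)}(\theta)\approx h_{K_1}(\theta)\approx n\|\theta\|_\infty$, so $w(Z_n(K_1))\approx\sqrt{n\ln n}$. Hence your top-scale function $f_j(\theta)\approx\sqrt{n}\,\|\theta\|_\infty$ (with $2^j\approx n$) has spherical mean of order $\sqrt{\ln n}$, not $O(1)$. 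As a check, if your input were true, your own concentration step would give $w(\Psi_2(K))\ls C+C\sqrt{\ln\ln(en)}$, contradicting $w(\Psi_2(K_1))\approx\sqrt{\ln n}$.

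The proposed fallback does not repair this. From $w(Z_{\sqrt n}(K))\approx n^{1/4}$, the inclusions $Z_p\subseteq C\frac{p}{q}Z_q$, and $w(Z_p)\ls w(Z_n)\ls C\sqrt{n\ln n}$ you only obtain $w(Z_p)/\sqrt{p}\ls C\min\{\sqrt{p}\,n^{-1/4},\sqrt{n\ln n/p}\}$. This is of order $n^{1/8}$ at $p\approx n^{3/4}$, a polynomial loss that cannot be absorbed into $\sqrt{\ln n}$.

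The missing ingredient is a dimension reduction giving the correct bound $w(Z_q(K))\ls C\sqrt{q\ln(eq)}$ for all $1\ls q\ls n$ (Lemma~\ref{lem:Zq-width}).
For $F\in G_{n,q}$ one has $P_F(Z_q(\mu_K))=Z_q(\pi_F(\mu_K))$, with $\pi_F(\mu_K)$ isotropic on the $q$-dimensional space $F$. So $Z_q$ becomes the top-order centroid body of a $q$-dimensional measure. Via Ball's bodies, $Z_q(\pi_F(\mu_K))\approx Z_q(M)\subseteq{\rm conv}\{M,-M\}$ with $M=\overline{K_{q+1}}(\pi_F(\mu_K))$ almost isotropic. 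Bizeul's estimate in dimension $q$ then gives $w_F(P_F(Z_q(\mu_K)))\ls C\sqrt{q\ln(eq)}$, and averaging through $w(A)=\int_{G_{n,q}}w_F(P_F(A))\,d\nu_{n,q}(F)$ gives the claim.

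With this input every $f_j$ has mean at most $C\sqrt{\ln(en)}$. Your L\'evy concentration over the $O(\ln n)$ dyadic scales (Lipschitz constants $C2^{j/2}\ls C\sqrt{n}$, hence tails $e^{-ct^2}$) then does finish the proof, at the cost of an additional $C\sqrt{\ln\ln(en)}$.

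That last step is a legitimate alternative to the paper's. The paper dominates the maximum by an $\ell_s$-sum with $s=\ln(en)$, applies H\"older, and bounds $w_s(Z_{q_k}(K))$ by the Litvak--Milman--Schechtman formula $w_s(C)\approx\max\{w(C),R(C)\sqrt{s/n}\}$. Both rest on the same spherical concentration of $h_{Z_q}$: yours uses it through a union bound over scales, the paper's through moments.
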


Brazitikos and Hioni \cite{Brazitikos-Hioni-2015} had previously obtained the upper bound $w(\Psi_2(K))\ls C(\ln(en))^2$. Note that
$$w(\Psi_2(K_1))\approx w(\sqrt{n}B_1^n)\approx\sqrt{\ln(en)}$$
for the isotropic cross-polytope $K_1$, hence the estimate of Theorem~\ref{th:mean-width} is asymptotically sharp. 

A further simple observation is that \[\Psi_2(K)\supseteq c\,Z_2(K)=cL_KB_2^n\]
for every isotropic convex body $K$ in $\mathbb{R}^n$, which implies that
\[M(\Psi_2(K))=\int_{S^{n-1}}\|\xi\|_{\Psi_2(K)}\,d\sigma(\xi)\ls C.\]
Combining this estimate with Theorem~\ref{th:mean-width}, we obtain
\[M(\Psi_2(K))\,w(\Psi_2(K))\ls C\sqrt{\ln(en)}.\]
Thus, in the terminology of \cite[Chapter~6]{AGA-book}, the subgaussian body of an isotropic convex body is in the $\ell$-position.

\smallskip 

We next return to the conjecture of Letwin and Mikulincer. We obtain a complementary result by showing that one can always find a complete orthonormal
basis consisting entirely of subgaussian directions, with subgaussian constants growing at most logarithmically in the dimension. More
precisely, we prove the following theorem.

\begin{theorem}\label{th:partial-onb}Let $K$ be an isotropic convex body in ${\mathbb{R}}^n$. There exists
an orthonormal basis $\{v_1,\ldots ,v_n\}$ of $\mathbb{R}^n$ such that
\begin{equation*}\|\langle \cdot ,v_i\rangle\|_{\psi_2}\ls C\,\sqrt{\ln(en)}\end{equation*}
for all $i=1,\ldots ,n$. In fact,
$$\int_{O(n)}\max_{1\ls i\ls n} \|\langle \cdot,U(e_i)\rangle \|_{\psi_2}d\nu(U)  \ls C_1\sqrt{\ln (en)},$$
where $\nu$ is the Haar probability measure on $O(n)$ and $\{e_1,\ldots ,e_n\}$ is the standard orthonormal basis of $\mathbb{R}^n$.
\end{theorem}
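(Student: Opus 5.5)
The plan is to prove the integral estimate; the existence of the basis then follows, since some $U$ must do at least as well as the average. Write $h(v)=h_{\Psi_2(K)}(v)=\|\langle\cdot,v\rangle\|_{\psi_2}$. Since $U(e_i)$ is uniformly distributed on $S^{n-1}$ for each $i$, it is natural to use concentration of measure for the Lipschitz function $h$ on the sphere and then a union bound over the $n$ vectors. Two pieces of information are needed.

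First, $h$ is Lipschitz. It is a norm, and by the case $m=1$ of Theorem~\ref{th:vk-psi2-body} we have $\Psi_2(K)\subseteq C\sqrt n B_2^n$. Hence $|h(u)-h(v)|\ls C\sqrt n|u-v|$, so the Lipschitz constant is $b\ls C\sqrt n$.

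Second, the mean of $h$ is small. Theorem~\ref{th:mean-width} gives $\int_{S^{n-1}}h\,d\sigma=w(\Psi_2(K))/2\ls C\sqrt{\ln(en)}$; I take $w$ to be the usual mean width $2\int h\,d\sigma$, and in any case only the order matters.

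Now apply spherical concentration (L\'evy's inequality):
\[\sigma\bigl(\{v:\ h(v)\gr w(\Psi_2(K))/2+t\}\bigr)\ls C\exp(-cnt^2/b^2)\ls C\exp(-c't^2).\]
This is a tail with an absolute subgaussian constant, uniform in $n$. This is exactly where the sharp bound $b\ls C\sqrt n$ matters: with it, the $\sqrt{n}$ in the Lipschitz constant is cancelled by the dimension factor in the concentration exponent.

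Finally, use a union bound over $i=1,\dots,n$. Since each $U(e_i)$ is uniform on $S^{n-1}$,
\[\nu\bigl(\max_i h(U e_i)\gr C\sqrt{\ln(en)}+t\bigr)\ls Cn\,e^{-c't^2}.\]
Integrate this tail, splitting at $t_0=C''\sqrt{\ln(en)}$, beyond which $Cne^{-c't^2}\ls e^{-c''t^2}$. This gives
\[\mathbb E_\nu\max_i h(Ue_i)\ls C\sqrt{\ln(en)}+t_0+O(1)\ls C_1\sqrt{\ln(en)}.\]
Equivalently, the expected maximum of $n$ subgaussian variables with $O(1)$ constant exceeds their common mean by at most $O(\sqrt{\ln n})$.

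The argument is therefore routine once Theorems~\ref{th:mean-width} and~\ref{th:vk-psi2-body} are available. The only real obstacle is having both sharp inputs at once: the mean width of order $\sqrt{\ln n}$, which rests on Bizeul's estimate, and the diameter bound $\Psi_2(K)\subseteq C\sqrt n B_2^n$. A weaker diameter bound, for example one with an extra logarithmic factor, would spoil the uniform subgaussian tail and hence the final estimate.
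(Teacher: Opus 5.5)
Your proposal is correct and follows essentially the same route as the paper. The paper uses $R(\Psi_2(K))\ls C\sqrt n$ (Corollary~\ref{cor:radius}) to get a Lipschitz constant $C\sqrt n$, and spherical concentration around $w(\Psi_2(K))\ls C\sqrt{\ln(en)}$ to get an $O(1)$-subgaussian tail (Theorem~\ref{th:spherical}); a union bound over the $n$ columns $U(e_i)$ and integration of the tail from $t_0=C_0\sqrt{\ln(en)}$ then bound $\mathbb{E}\max_i$ (Proposition~\ref{prop:spherical}), with the only cosmetic difference that the paper normalizes $w(K)=\int_{S^{n-1}}h_K\,d\sigma$ without the factor $2$, so the mean of $h$ is $w(\Psi_2(K))$ itself.
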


Theorem~\ref{th:partial-onb} should be viewed as complementary to Theorem~\ref{th:onb-psi2}. The latter provides a basis in which the
subgaussian constants are uniformly bounded for most vectors and deteriorate only near the end of the basis, whereas
Theorem~\ref{th:partial-onb} yields a complete orthonormal basis whose subgaussian constants are bounded logarithmically in the dimension.

We conclude by identifying an important class of convex bodies for which the preceding methods yield optimal results. In particular, we prove that the Letwin–Mikulincer conjecture holds for every unconditional isotropic convex body. The proof combines our general approach with the sharp $\psi_2$-estimates of Bobkov and Nazarov.

\smallskip

The paper is organized as follows. In Section~\ref{section:2} we collect the necessary background and auxiliary results. 
Section~\ref{section:3} is devoted to the proofs of our main results: the volume estimate for the subgaussian body, 
the projection theorem and its consequences, the construction of orthonormal bases of subgaussian directions, and the unconditional case.
Section~\ref{section:4} contains further remarks and questions.

%%%%%%%%%%%%%%%%%%%%%%%%%%%%%%%%%%%%%%%%%%%%%%%%%%%%%%%%%%%%%%%%%%%%%%%%%%%%%%%%%%%%%%%%%%%%%%%%%%%%%%%%%%%%%%%%%%%%%%%%%%%%%%%%%%%%%%%%%%%%%
\section{Background and auxiliary facts}\label{section:2}
%%%%%%%%%%%%%%%%%%%%%%%%%%%%%%%%%%%%%%%%%%%%%%%%%%%%%%%%%%%%%%%%%%%%%%%%%%%%%%%%%%%%%%%%%%%%%%%%%%%%%%%%%%%%%%%%%%%%%%%%%%%%%%%%%%%%%%%%%%%%%%

We work in $\mathbb{R}^n$ equipped with the standard inner product $\langle \cdot, \cdot \rangle$.  
The associated Euclidean norm is denoted by $|\cdot|$, and $B_2^n$ and $S^{n-1}$ denote the Euclidean unit ball and unit sphere, respectively.  
Lebesgue measure on $\mathbb{R}^n$ is denoted by ${\rm vol}_n$, and we write $\omega_n = {\rm vol}_n(B_2^n)$.
The rotationally invariant probability measure on $S^{n-1}$ is denoted by $\sigma$, and $\nu$ is the Haar probability measure on $O(n)$ . 
The Grassmannian of $k$-dimensional subspaces of ${\mathbb{R}}^n$ is denoted by $G_{n,k}$ and is equipped with the Haar probability measure $\nu_{n,k}$. 

Throughout the paper, the symbols $C, c, c', c_1, c_2, \ldots$ denote absolute positive constants whose values may change from line to line.  
Whenever we write $a \approx b$ for positive quantities $a,b$, we mean that there exist absolute constants $c_1, c_2 > 0$ such that $c_1 a \ls b \ls c_2 a$. 
For convex bodies, $A\approx B$ means that $c_1A\subseteq B\subseteq c_2A$.

For background on isotropic convex bodies and log-concave measures, we refer the reader to~\cite{BGVV-book};  
for general background on asymptotic geometric analysis, see~\cite{AGA-book, AGA-book-2}.

\medskip

A {convex body} in $\mathbb{R}^n$ is a compact convex subset $K$ with nonempty interior.  
We say that $K$ is {symmetric} if $K = -K$, and {centered} if its barycenter
\[{\rm bar}(K)=\frac{1}{{\rm vol}_n(K)}\int_K x\,dx\] is the origin.

Let $K$ be a convex body with $0 \in {\rm int}(K)$. Its radial function is defined by 
$\rho_K(u) = \max\{ t > 0 : tu \in K \}$ for $u \in S^{n-1}$, 
and its {support function} is
$h_K(y) = \max \{ \langle x, y \rangle : x \in K \}$ for $y\in\mathbb{R}^n$. The mean width of $K$ is the average of its support function on $S^{n-1}$:
\[w(K)=\int_{S^{n-1}}h_K(\xi)\,d\sigma(\xi).\]
We note that if $T \in GL_n$, then
\[\frac{1}{\|T^{-1}\|_{{\rm op}}} w(K) \ls w(TK) \ls \|T\|_{{\rm op}} w(K).\]

The Minkowski functional of $K$ is $p_K(x) = \inf\{ t > 0 : x \in tK \}$. The mean norm of $K$ is the average of its Minkowski functional on $S^{n-1}$:
\[M(K)=\int_{S^{n-1}}p_K(\xi)\,d\sigma(\xi).\]
The outer radius, inradius and volume radius of $K$ are respectively $R(K) = \max\{ |x| : x \in K \}$, $r(K)=\max\{r>0:rB_2^n\subseteq K\}$,
and
\[{\rm vrad}(K) = \left( \frac{{\rm vol}_n(K)}{{\rm vol}_n(B_2^n)} \right)^{1/n}.\]
The {polar body} of $K$ is
\[K^{\circ} = \{ x \in \mathbb{R}^n : \langle x, y \rangle \ls 1 \text{ for all } y \in K \}.\]
For every convex body $K\subseteq \mathbb{R}^n$ we write $\overline{K}$ for the multiple of $K$ that has
volume $1$; in other words, $\overline{K}:={\rm vol}_n(K)^{-1/n}K$.

\medskip
An absolutely continuous probability measure $\mu$ on $\mathbb{R}^n$ is called {log-concave} if its density with respect
to Lebesgue measure is of the form
$f_\mu = e^{-\varphi}$, where $\varphi : \mathbb{R}^n \to \mathbb{R} \cup \{+\infty\}$ is convex.  
The uniform probability measure on a convex body is log-concave. The barycenter of a probability measure $\mu$ is
\[{\rm bar}(\mu) := \int_{\mathbb{R}^n} x f_\mu(x) \, dx,\]
and its {isotropic constant} is the quantity
\begin{equation*}L_\mu := \|f_\mu\|_{\infty}^{\frac{1}{n}}\det({\rm Cov}(\mu))^{\frac{1}{2n}},\end{equation*}
where
\[{\rm Cov}(\mu) := \int x \otimes x \,f_{\mu}(x)\, dx- \left( \int xf_{\mu}(x) \, dx \right) \otimes \left( \int xf_{\mu}(x) \, dx \right)\]
is the covariance matrix of $\mu$.  
A log-concave probability measure $\mu$ on $\mathbb{R}^n$ is called {isotropic} if ${\rm bar}(\mu) = 0$ and ${\rm Cov}(\mu) = I_n$. For an isotropic
log-concave probability measure, $L_{\mu}=\|f_{\mu}\|_{\infty}^{1/n}$ since $\det({\rm Cov}(\mu))=1$. 
\medskip

A convex body $K \subset \mathbb{R}^n$ is called {isotropic} if it has volume $1$, its barycenter is at the origin, and there exists a constant $L_K > 0$ such that
\[\int_K \langle x, \xi \rangle^2 \, dx = L_K^2\quad\text{for all } \xi \in S^{n-1}.\]
Every convex body $K$ admits an isotropic affine image, unique up to orthogonal transformations (see \cite{BGVV-book}).
The constant $L_K$ is an affine invariant of $K$. 

A central problem in asymptotic convex geometry, posed by Bourgain~\cite{Bourgain-1986},
is the hyperplane conjecture: whether there exists an absolute constant $C>0$
such that
\[L_n:=\max\{L_K:K\text{ is an isotropic convex body in }\mathbb{R}^n\}\ls C\]
for every $n\gr 1$. In fact, Ball~\cite{Ball-1988} showed that, in every
dimension,
\[\sup_\mu L_\mu\ls C\sup_K L_K,\]
where the suprema are taken over all log-concave probability measures $\mu$ and
all convex bodies $K\subseteq\mathbb{R}^n$, respectively.

The hyperplane conjecture was recently resolved affirmatively by
Klartag and Lehec~\cite{KL}, following major progress by Guan~\cite{Guan};
an alternative proof was subsequently obtained by Bizeul~\cite{Bizeul-2025}.
The latter approach relies on the following optimal small-ball estimate:
if $\mu$ is an isotropic log-concave probability measure on $\mathbb{R}^n$,
then for every $0<\varepsilon\ls c_0$ and every $y\in\mathbb{R}^n$,
\begin{equation}\label{eq:optimal-small-ball}
\mu\left(\{x\in\mathbb{R}^n:|x-y|^2\ls\varepsilon n\}\right)
\ls \varepsilon^{c_0 n},
\end{equation}
where $c_0>0$ is an absolute constant.

E.~Milman proved in \cite{EMilman-2014} that if $K$ is an isotropic convex body in ${\mathbb R}^n$, then
$$w(K) \ls C \sqrt{n} (\ln(en))^2 L_K \ls c_1 \sqrt{n} (\ln (en))^2,$$
where the second inequality uses the boundedness of $L_n$. Very recently, Bizeul \cite{Bizeul-M-2026} obtained the optimal upper bound
\begin{equation}\label{eq:Bizeul-M}
w(K) \ls C\sqrt{n\ln(en)}
\end{equation}
for any isotropic convex body $K$ in ${\mathbb R}^n$.

\medskip 

Let $\mu$ be a centered log-concave probability measure on $\mathbb{R}^n$. For $p\gr 1$, the
$L_p$-centroid body $Z_p(\mu)$ of $\mu $ is the symmetric convex body whose support function is
\begin{equation*} h_{Z_p(\mu)}(y):=\|\langle \cdot ,y\rangle\|_{L_p(\mu)}=\left(
\int_{\mathbb{R}^n} |\langle x,y\rangle|^p f_{\mu}(x)dx \right)^{1/p},\qquad y\in \mathbb{R}^n.
\end{equation*} The centroid bodies satisfy the covariance property $Z_p(T_{\ast}\mu)=T(Z_p(\mu ))$ for every $T\in GL_n$,
where $(T_{\ast}\mu)(A)=\mu(T^{-1}(A))$. If $\mu$ is isotropic then $Z_2(\mu)=B_2^n$ by the normalization ${\rm Cov}(\mu)=I_n$. Moreover, for every $1\ls p<q$,
\begin{equation}\label{eq:zq-mu-inclusions}Z_p(\mu)\subseteq Z_q(\mu)\subseteq \frac{cq}{p}Z_p(\mu),
\end{equation}
where $c>0$ is an absolute constant. The asymptotic theory of centroid bodies was developed by Paouris in
\cite{Paouris-GAFA,Paouris-TAMS}.

If $K$ is a centered convex body and $\lambda_K$ denotes the uniform probability measure on $K$, we write
$Z_p(K):=Z_p(\lambda_K)$. Note that
\[Z_p(T(K))=T(Z_p(K)),\qquad T\in SL_n.\]
If $K$ is isotropic then the probability measure $\mu_K$, with density $L_K^n\mathds{1}_{\frac{K}{L_K}}$, is 
isotropic, and $Z_p(K)=L_KZ_p(\mu_K)$. For every $p\gr n$ and every $\xi\in S^{n-1}$ one has 
\begin{equation}\label{eq:stable}\|\langle \cdot,\xi\rangle\|_p\approx \|\langle \cdot,\xi\rangle\|_{\infty}\end{equation} 
(see \cite[Lemma~3.2.8/Proposition~5.1.2]{BGVV-book}) and hence $Z_p(K)\supseteq c_2Z_{\infty }(K)$, where $c_2>0$ is an absolute constant and 
$Z_{\infty}(K)={\rm conv}\{K,-K\}$. 

Recall that $\Psi_2(K)$ is the symmetric convex body with support function
\[ h_{\Psi_2(K)}(\xi)=\|\langle \cdot,\xi\rangle\|_{\psi_2},\qquad \xi\in S^{n-1}.\]
An important consequence for the present paper is the following representation of the $\psi_2$-norm:
\[\|\langle \cdot ,\xi\rangle\|_{L_{\psi_2}(K)}\approx\sup_{p\gr1}\frac{\|\langle \cdot ,\xi\rangle\|_{L_p(K)}}{\sqrt p}\approx
\sup_{1\ls p\ls n}\frac{\|\langle \cdot ,\xi\rangle\|_{L_p(K)}}{\sqrt p}\]
for every centered convex body $K$ and any $\xi\in S^{n-1}$, where the second equivalence is immediate from 
the fact that $p\mapsto \|\langle \cdot ,\xi\rangle\|_{L_p(K)}/\sqrt p$ is decreasing on $[n,\infty)$ by \eqref{eq:stable}. This implies
\begin{equation}\label{eq:convex-hull}\Psi_2(K )\approx{\rm conv}\left\{\frac{Z_p(K)}{\sqrt{p}}: 1\ls p\ls n\right\}.\end{equation} 
For a subspace $F \in G_{n,k}$, let $P_F:\mathbb{R}^n\to F$ denote the orthogonal projection. The marginal of a log-concave probability
measure $\mu$ with respect to $F$ is defined by
\begin{equation*}\pi_F(\mu)(A):=\mu(P_F^{-1}(A))
\end{equation*}for every Borel subset $A$ of $F$. Since linear images preserve log-concavity, $\pi_F(\mu)$ is again a log-concave probability
measure on $F$. Moreover, if $\mu$ is centered, then $\pi_F(\mu)$ is centered, and if $\mu$ is isotropic then
$\pi_F(\mu)$ is isotropic.

A basic observation of Paouris \cite{Paouris-GAFA} (see also \cite[Theorem~5.1.12]{BGVV-book}) is that projections commute with centroid bodies. Namely, 
for every $1\ls k\ls n$ and any $F \in G_{n,k}$, and every $p\gr 1$,
\begin{equation}\label{eq:marginal-Zq}
P_F(Z_p(\mu)) = Z_p(\pi_F(\mu)).
\end{equation}
For $q\neq 0$ and $q>-n$, define the $q$-th moment of the Euclidean norm by
\[I_q(\mu ):=\left( \int_{\mathbb{R}^n} |x|^q d\mu (x)\right)^{1/q}.\]
Combining the small-ball estimate \eqref{eq:optimal-small-ball} of 
Bizeul \cite{Bizeul-2025} with the standard representation of negative moments through small-ball probabilities, one obtains
\[I_{-(n-1)}(\mu)\approx I_2(\mu)\] 
for every isotropic log-concave probability measure $\mu$ on $\mathbb{R}^n$ (see \cite{Dafnis-Paouris-2010} and \cite[Theorem~6.9]{Giannopoulos-Pafis-Tziotziou-2025}).

The above equivalence of negative moments was used by Letwin and
Mikulincer \cite[Proposition~2.4]{Letwin-Mikulincer-2026} to obtain the following result.

\begin{proposition}\label{prop:2.4}
There exist absolute constants $c_0 \in (0,1/8]$ and $c,C > 0$ such that the following holds.
If $K \subset \R^n$ is an isotropic convex body, then for every $1 \ls p \ls 2c_0n$,
\[w_{-p}(Z_p(K))\approx w_{-2p}(Z_p(K))\approx \sqrt{p}\,L_K.\]
\end{proposition}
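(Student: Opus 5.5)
The plan is to reduce the spherical negative moments of $h_{Z_p(K)}$ to volumes of polars of low-dimensional projections, and then to control those volumes via the marginals. Throughout I use $w_{-q}(A)=\bigl(\int_{S^{n-1}}h_A(\theta)^{-q}\,d\sigma(\theta)\bigr)^{-1/q}$.

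\textbf{Step 1: reduction to projections.} Fix an integer $k\ls n$. For $\theta\in F\cap S^{n-1}$ we have $h_{P_F(A)}(\theta)=h_A(\theta)$. Combining this with the polar-coordinates formula $\mathrm{vol}_k(B^\circ)=\omega_k\int_{S_F}h_B^{-k}\,d\sigma_F$ on $F$ gives
\[
w_{-k}(A)^{-k}=\int_{S^{n-1}}h_A^{-k}\,d\sigma=\int_{G_{n,k}}\int_{S_F}h_{P_F(A)}^{-k}\,d\sigma_F\,d\nu_{n,k}(F)=\int_{G_{n,k}}{\rm vrad}\bigl((P_F(A))^\circ\bigr)^{k}\,d\nu_{n,k}(F).
\]
The middle equality is the rotation invariance of $\sigma$. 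Consequently, if ${\rm vrad}((P_F(A))^\circ)^{-1}\approx a$ uniformly in $F\in G_{n,k}$, then $w_{-k}(A)\approx a$.

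\textbf{Step 2: Santal\'o and reverse Santal\'o.} By the Blaschke--Santal\'o inequality and the Bourgain--Milman reverse inequality (applied in $F$ to the symmetric body $P_F(A)$),
\[
{\rm vrad}\bigl((P_F(A))^\circ\bigr)^{-1}\approx{\rm vrad}(P_F(A)).
\]
Apply this with $A=Z_p(K)=L_KZ_p(\mu_K)$. By \eqref{eq:marginal-Zq}, $P_F(Z_p(\mu_K))=Z_p(\nu_F)$, where $\nu_F=\pi_F(\mu_K)$ is an isotropic log-concave measure on $F\cong\mathbb{R}^k$. It therefore suffices to prove
\[
{\rm vrad}(Z_p(\nu))\approx\sqrt p
\]
for every isotropic log-concave $\nu$ on $\mathbb{R}^k$ whenever $p\approx k$. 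I will take $k=\lfloor p\rfloor$ for $w_{-p}$ and $k=\lfloor 2p\rfloor$ for $w_{-2p}$. Non-integer exponents are handled by monotonicity of $q\mapsto w_{-q}$ together with $Z_p\subseteq Z_{2p}\subseteq cZ_p$ from \eqref{eq:zq-mu-inclusions}. The restriction $2p\ls 4c_0n\ls n$ guarantees that $k\ls n$ in both cases.

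\textbf{Step 3: volume of $Z_k(\nu)$ in dimension $k$.} This is the main step. Let $f$ be the density of $\nu$. I would compare $Z_k(\nu)$ with a level set of $f$, in both directions:
\[
c_1\{f\gr e^{-k}\|f\|_\infty\}\subseteq Z_k(\nu)\subseteq c_2\{f\gr e^{-c_3k}f(0)\}.
\]
\begin{itemize}
\item The left inclusion follows from the standard estimate that, for $x$ in this level set, the set where $\langle\cdot,x\rangle\gr c\langle x,x\rangle$ carries $\nu$-mass at least $e^{-Ck}$ (Paouris's argument). Since $\mathrm{vol}_k(\{f\gr e^{-k}\|f\|_\infty\})\gr c^k/\|f\|_\infty$ and $\|f\|_\infty^{1/k}=L_\nu\ls C$ by Klartag--Lehec, this gives ${\rm vrad}(Z_k(\nu))\gr c\sqrt k$.
\item The right inclusion comes from the tail decay of $f$ along rays together with \eqref{eq:stable}. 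The level set has volume at most $e^{Ck}/f(0)$, and $f(0)^{1/k}\approx L_\nu\gr c$, so ${\rm vrad}(Z_k(\nu))\ls C\sqrt k$.
\end{itemize}
These bounds are uniform in $\nu$, hence uniform in $F$.

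Feeding Step 3 into Steps 1--2 yields $w_{-p}(Z_p(K))\approx w_{-2p}(Z_p(K))\approx\sqrt p\,L_K$. I expect the two-sided level-set comparison in Step 3, with constants independent of $k$, to be the delicate point. If the upper inclusion proves awkward, an alternative is to obtain the lower bound on $w_{-2p}$ from the negative-moment equivalence $I_{-(n-1)}(\mu)\approx I_2(\mu)$ (via Bizeul's small-ball estimate \eqref{eq:optimal-small-ball}) through Paouris's formula relating $I_{-k}(\mu)$ to $\int_{G_{n,k}}{\rm vrad}(Z_k(\pi_F\mu)^\circ)$.
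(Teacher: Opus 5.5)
Your main route is correct in outline and differs from the one the paper relies on. The paper does not reprove the statement. It quotes Letwin--Mikulincer, who obtain it from the negative-moment equivalence $I_{-(n-1)}(\mu)\approx I_2(\mu)$, a consequence of Bizeul's small-ball estimate; this is essentially your fallback through Paouris's relation $I_{-k}(\mu)\approx\sqrt{n/k}\,w_{-k}(Z_k(\mu))$. Your Steps 1--2 are exactly the Grassmannian computation behind that relation, but you bypass $I_{-k}$ and feed the bounded isotropic constant of the $k$-dimensional marginals directly into ${\rm vrad}(Z_k(\pi_F(\mu_K)))$. Since $I_{-k}(\mu)^{-k}$ is a constant multiple of $\int_{G_{n,k}}f_{\pi_F(\mu)}(0)\,d\nu_{n,k}(F)$, both routes rest on the same input, namely the solution of the slicing problem. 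Yours is more self-contained and shows that only the lower bound $w_{-k}(Z_k(K))\gr c\sqrt{k}\,L_K$ needs $L_\nu\ls C$; the paper's is a one-line citation. Your reductions for non-integer $p$ and for $w_{-2p}$ are fine, since $2p\ls 4c_0n\ls n/2$.

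Step 3, however, contains a false claim. The inclusion $Z_k(\nu)\subseteq c_2\{f\gr e^{-c_3k}f(0)\}$ fails with absolute constants. The inclusion is invariant under linear maps, so take $\nu$ uniform on a centered simplex $\Delta\subset\R^k$. Every such level set equals $\Delta$, while $Z_k(\nu)\supseteq c\,{\rm conv}\{\Delta,-\Delta\}$ by \eqref{eq:stable}, and $-\Delta\subseteq t\Delta$ forces $t\gr k$.

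Your ray-decay argument is sound but proves less. Set $L=\{f\gr e^{-c_3k}f(0)\}$; then $x\notin sL$ with $s\gr1$ gives $f(x)\ls f(0)e^{-c_3ks}$. This yields $h_{Z_k(\nu)}(\theta)\ls C\max\{h_L(\theta),h_L(-\theta)\}$, that is, $Z_k(\nu)\subseteq C\,{\rm conv}\{L,-L\}\subseteq C(L-L)$. The volume bound you need then follows from Rogers--Shephard: ${\rm vol}_k(L-L)\ls 4^k{\rm vol}_k(L)\ls 4^ke^{c_3k}/f(0)$. Combine this with $f(0)^{1/k}\gr e^{-1}L_\nu\gr c$, or simply cite ${\rm vol}_k(Z_k(\nu))^{1/k}\approx f(0)^{-1/k}$.

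A smaller point concerns the left inclusion. A mass bound for $\{\langle\cdot,x\rangle\gr c|x|^2\}$ only gives $h_{Z_k(\nu)}(x/|x|)\gr c|x|$, not $x\in CZ_k(\nu)$. You need the bound for every $\theta$ with $\langle x,\theta\rangle>0$, and the same argument gives it: push $\{\langle y,\theta\rangle\gr0\}$ forward by $y\mapsto (x+y)/2$, use $f(\frac{x+y}{2})\gr e^{-k/2}f(y)$, and apply Gr\"unbaum's inequality.
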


Here, for a convex body $L$ and $q\neq 0$, the $q$-th mean width is defined by
\begin{equation}\label{eq:q-mean-width}w_q(L)=\left(\int_{S^{n-1}}h_L(\xi)^qd\sigma(\xi)\right)^{1/q}.\end{equation}
In particular, this gives precise control of the negative moments of the support function of $Z_p(K)$.

The previous facts allow us to convert information on the family of centroid bodies
$Z_p(K)$ into information on the subgaussian body $\Psi_2(K)$.
The following construction provides the main mechanism for doing so.
Let $K$ be an isotropic convex body in $\mathbb{R}^n$ and fix $1 \ls p \ls c_0n$, where $c_0$ is the constant from Proposition~\ref{prop:2.4}. Define
\begin{equation}\label{eq:3.1}
A_p(K)=\{y \in \R^n : \|\langle \cdot,y\rangle\|_p \ls C_0\sqrt{np}\,L_K\}.
\end{equation}
Equivalently,
\[A_p:=A_p(K) = C_0\sqrt{np}\,L_K\,Z_p(K)^\circ.\]
In view of \eqref{eq:convex-hull}, one of the main new ideas of Letwin and Mikulincer \cite{Letwin-Mikulincer-2026} is to 
estimate the Gaussian measure of finite intersections of the sets $A_p$ from below. Using Proposition~\ref{prop:2.4} they obtain
the next lower bound for each individual $A_p$.

\begin{lemma}\label{lem:2.6}
For every $1 \ls p \ls 2c_0n$, if the constant $C_0$ in \eqref{eq:3.1} is chosen sufficiently large, then
\[\gamma_n(A_p) \gr e^{-C_1p},\]
where $C_1>0$ is an absolute constant.
\end{lemma}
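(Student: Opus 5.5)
We have to show that $\gamma_n(A_p)\gr e^{-C_1p}$, where $A_p=C_0\sqrt{np}\,L_K\,Z_p(K)^\circ$. Write $G$ for a standard Gaussian vector in $\mathbb{R}^n$ and polarize: $G=|G|\theta$, with $|G|$ independent of $\theta$ and $\theta$ distributed according to $\sigma$. Then
\[G\in A_p\iff |G|\,h_{Z_p(K)}(\theta)\ls C_0\sqrt{np}\,L_K .\]
Since $\mathbb P(|G|\ls 2\sqrt n)\gr 1/2$, it suffices to prove
\[\sigma\bigl(\{\theta:\ h_{Z_p(K)}(\theta)\ls (C_0/2)\sqrt p\,L_K\}\bigr)\gr 2e^{-C_1p}.\]
In other words, we need a lower bound for the measure of the set of directions on which the support function of $Z_p(K)$ is at most a constant multiple of its "typical small value" $\sqrt p\,L_K$. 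Proposition~\ref{prop:2.4} controls exactly that quantity.

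To get the lower bound, put $f(\theta)=h_{Z_p(K)}(\theta)^{-1}$. Proposition~\ref{prop:2.4} gives
\[\|f\|_{L_p(\sigma)}=w_{-p}(Z_p(K))^{-1}\gr \frac{c}{\sqrt p\,L_K},\qquad \|f\|_{L_{2p}(\sigma)}=w_{-2p}(Z_p(K))^{-1}\ls \frac{C}{\sqrt p\,L_K}.\]
A Paley--Zygmund argument in $L_p$ then turns these two moment bounds into a bound on a level set. Let $t=\tfrac12\|f\|_p$ and $E=\{f>t\}$. Splitting the integral over $E$ and its complement and applying Cauchy--Schwarz on $E$,
\[\|f\|_p^p\ls t^p+\int_E f^p\,d\sigma\ls 2^{-p}\|f\|_p^p+\sigma(E)^{1/2}\|f\|_{2p}^{p}.\]
Since $1-2^{-p}\gr 1/2$, this gives
\[\sigma(E)\gr \tfrac14\left(\frac{\|f\|_p}{\|f\|_{2p}}\right)^{2p}\gr \tfrac14 (c/C)^{2p}=e^{-C'p}.\]
On $E$ we have $h_{Z_p(K)}(\theta)<2\|f\|_p^{-1}\ls (2/c)\sqrt p\,L_K$. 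Choosing $C_0\gr 4/c$ therefore puts all of $E$ inside the required set. Taking $C_1=C'+\ln 4$ absorbs the constants $\tfrac12$ and $\tfrac14$, using $p\gr1$.

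The only substantive input is the two-sided comparison $w_{-p}\approx w_{-2p}$ from Proposition~\ref{prop:2.4}; that is why both negative moments are recorded there. The rest is bookkeeping.
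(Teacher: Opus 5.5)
Your proof is correct. Writing $G=|G|\theta$ reduces the claim to a lower bound for $\sigma(\{h_{Z_p(K)}\ls (C_0/2)\sqrt p\,L_K\})$. The Paley--Zygmund step applied to $h_{Z_p(K)}^{-1}$ then gives $e^{-Cp}$; it uses both $w_{-p}(Z_p(K))\ls C\sqrt p\,L_K$ and $w_{-2p}(Z_p(K))\gr c\sqrt p\,L_K$, where the upper bound on $w_{-p}$ alone would only give $e^{-Cp\ln p}$. Any choice such as $C_1=2\ln(C/c)+\ln 8$ absorbs the factor $\tfrac12\cdot\tfrac14$.

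The paper does not reprove Lemma~\ref{lem:2.6}; it quotes it from Letwin--Mikulincer as a consequence of Proposition~\ref{prop:2.4}. Your argument is exactly such a derivation, with the two-sided negative-moment estimate as its only substantive input.
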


Then, Letwin and Mikulincer combine Lemma~\ref{lem:2.6} with the Gaussian correlation inequality.

\begin{theorem}[Gaussian correlation inequality]\label{th:2.5}
Let $\gamma_n$ be the standard Gaussian measure on $\R^n$. If $A,B$ are symmetric convex Borel sets in $\mathbb{R}^n$, then
\[\gamma_n(A \cap B) \gr \gamma_n(A)\gamma_n(B).\]
By induction, for every finite family of symmetric convex Borel sets $A_1,\dots,A_m$,
\[\gamma_n\left(\bigcap_{j=1}^m A_j\right)\gr\prod_{j=1}^m \gamma_n(A_j).\]
\end{theorem}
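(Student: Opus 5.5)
The statement is the Gaussian correlation inequality (Royen's theorem), quoted as a known tool. I will not reprove the two-set inequality from scratch; I will cite it as Royen's theorem. The only thing that needs an argument is the passage from two sets to finitely many, which is an induction on $m$.

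\textbf{Base cases.} For $m=1$ the claim is trivial, and $m=2$ is the two-set statement.

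\textbf{Inductive step.} Assume the inequality holds for $m-1$ sets. Put $B=\bigcap_{j=1}^{m-1}A_j$. I first check that $B$ is admissible for the two-set inequality:
\begin{itemize}
\item it is Borel, being a finite intersection of Borel sets;
\item it is convex, as an intersection of convex sets;
\item it is symmetric, since $x\in B$ gives $x\in A_j$ for every $j$, hence $-x\in A_j$ for every $j$, hence $-x\in B$.
\end{itemize}
Applying the two-set inequality to $B$ and $A_m$, and then the induction hypothesis to $B$, gives
\[
\gamma_n\Bigl(\bigcap_{j=1}^m A_j\Bigr)=\gamma_n(B\cap A_m)\gr\gamma_n(B)\,\gamma_n(A_m)\gr\prod_{j=1}^{m}\gamma_n(A_j).
\]
This closes the induction.

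\textbf{Where the difficulty lies.} The induction is routine. All the depth is in the two-set case, which I would quote rather than reprove. If one wanted to sketch it, Royen's argument runs as follows.
\begin{enumerate}
\item Reduce, by approximation, to symmetric slabs, i.e.\ sets of the form $\{|X_i|\ls t_i\}$ for a centered Gaussian vector $X$.
\item Interpolate between the covariance matrix $\Sigma$ and its block-diagonal version, via $\Sigma(\tau)=\tau\Sigma+(1-\tau)\Sigma_{\rm block}$.
\item Pass to the squared coordinates, which form a multivariate gamma distribution.
\item Show, through an explicit Laplace-transform representation, that the probability is monotone in $\tau$.
\end{enumerate}
The monotonicity in the last step is the main obstacle, and it is exactly the content of Royen's theorem.
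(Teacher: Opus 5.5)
Your proposal is correct and follows the same route as the paper. The paper proves nothing itself: it quotes the two-set inequality from Royen (citing also the exposition of Lata{\l}a and Matlak). It then obtains the finite-family version by the same induction you give, which works because finite intersections of symmetric convex Borel sets are again symmetric convex Borel sets.
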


The Gaussian correlation inequality was proved by Royen \cite{Royen-2014}; see also the exposition of Lata{\l}a and Matlak \cite{LM17}. 
We will use this inequality together with Lemma~\ref{lem:2.6} to obtain simultaneous control of the sets 
$A_p$, which is the key step in constructing large families of subgaussian directions.

%%%%%%%%%%%%%%%%%%%%%%%%%%%%%%%%%%%%%%%%%%%%%%%%%%%%%%%%%%%%%%%%%%%%%%%%%%%%%%%%%%%%%%%%%%%%%%%%%%%%%%%%%%%%%%%%%%%%%%%%%%%%%%%%%%%%%%%%%%%%%%%
\section{Geometry of the subgaussian body}\label{section:3}
%%%%%%%%%%%%%%%%%%%%%%%%%%%%%%%%%%%%%%%%%%%%%%%%%%%%%%%%%%%%%%%%%%%%%%%%%%%%%%%%%%%%%%%%%%%%%%%%%%%%%%%%%%%%%%%%%%%%%%%%%%%%%%%%%%%%%%%%%%%%%%%

In this section, we prove our main results concerning the geometry of the subgaussian body $\Psi_2(K)$ associated with a centered convex body $K$ in
$\mathbb{R}^n$. 

\medskip 

\noindent \textbf{\S~3.1. Volume estimates.} We begin with the proof of Theorem~\ref{th:vol-ratio-psi2-body}, which asserts that $\Psi_2(K)$ has uniformly bounded volume
ratio.

\begin{proof}[Proof of Theorem~$\ref{th:vol-ratio-psi2-body}$]
The subgaussian body $\Psi_2(K)$ satisfies the following affine invariance property: 
\[\Psi_2(T(K))=T(\Psi_2(K)),\qquad T\in SL_n.\]
Since the $L_p$-centroid bodies $Z_p(K)$ (and hence $Z_2(K)$) satisfy the same property, the volume ratio is affine invariant.
Consequently, we may assume that $K$ is isotropic.

Our starting point is \eqref{eq:convex-hull}:
\begin{equation*}\Psi_2(K )\approx {\rm conv}\left\{\frac{Z_p(K)}{\sqrt{p}}: 1\ls p\ls n\right\}.\end{equation*} 
By \eqref{eq:zq-mu-inclusions} we have $Z_{2p}(K )\approx Z_{p}(K)$ for all $p\gr 1$. This implies that the above
convex hull may be restricted, up to an absolute constant, to dyadic values
of $p$. Hence,
\begin{equation*}\Psi_2(K)\approx {\rm conv} \left\{\frac{Z_{2^k}(K )}{\sqrt{2^k}}: k=0,1,\ldots ,\lfloor\log_{2}n\rfloor \right\}. \end{equation*}
In fact, using the inclusion $Z_n(K)\subseteq \frac{c}{c_0}Z_{c_0n}(K)$, where $c_0$ is the absolute constant in Proposition~\ref{prop:2.4}, we see that
\begin{equation*}\Psi_2(K)\approx {\rm conv} \left\{\frac{Z_{2^k}(K )}{\sqrt{2^k}}: k=0,1,\ldots ,k_0 \right\}. \end{equation*}
where $k_0=\lceil \log_2(c_0n)\rceil$. Taking polars, we obtain
\[\Psi_2(K )^{\circ}\approx \bigcap_{k=0}^{k_0}\sqrt{2^k}Z_{2^k}(K)^{\circ}.\]
By the definition of the sets $A_p$ in \eqref{eq:3.1},
\[\sqrt{p}\,Z_p(K)^\circ=\frac{1}{C_0\sqrt n\,L_K}A_p .\]
Therefore,
\[\Psi_2(K)^\circ\approx\frac{1}{C_0\sqrt n\,L_K}\bigcap_{k=0}^{k_0}A_{2^k}.\]
Set
\[A=\bigcap_{k=0}^{k_0}A_{2^k}.\]
Each $A_{2^k}$ is a symmetric convex set. Therefore, the Gaussian correlation inequality (Theorem~\ref{th:2.5}) gives 
\[\gamma_n(A)=\gamma_n\left(\bigcap_{k=0}^{k_0 } A_{2^k}\right)\gr\prod_{k=0}^{k_0} \gamma_n(A_{2^k}).\]
Applying Lemma~\ref{lem:2.6}, we obtain
\[\gamma_n(A)\gr\prod_{k=0}^{k_0} e^{-C_12^k}=
\exp\left(-C_1\sum_{k=0}^{k_0}2^k\right)\gr e^{-C_2n}.\]
Since
\[\gamma_n(A)=(2\pi)^{-n/2}\int_Ae^{-|x|^2/2}dx\]
and $e^{-|x|^2/2}\ls 1$, we have
\[{\rm vol}_n(A)\gr \int_Ae^{-|x|^2/2}dx= (2\pi)^{n/2}\gamma_n(A) \gr e^{-C_3n}.\]
Consequently,
\[{\rm vol}_n(\Psi_2(K )^{\circ})\gr \left(\frac{c}{\sqrt{n}L_K}\right)^n.\]
By the Blaschke--Santal\'{o} inequality,
\[{\rm vol}_n(\Psi_2(K)){\rm vol}_n(\Psi_2(K )^{\circ})\ls\omega_n^2,\]
together with $\omega_n^{1/n}\approx\frac{1}{\sqrt{n}}$ and the boundedness of the isotropic constant $L_K\ls C$, we conclude that
\[{\rm vrad}(\Psi_2(K))\ls C.\]
This proves the theorem.
\end{proof}

For a centered log-concave probability measure $\mu$ on $\mathbb{R}^n$, we define the symmetric convex body $\Psi_{2}(\mu)$
by 
\begin{equation*} h_{\Psi_{2}(\mu)}(v ):= \sup_{1\ls q\ls n}\frac{\|\langle \cdot, v \rangle \|_{L_q(\mu)}}{\sqrt{q}}, \qquad v\in\mathbb{R}^n.\end{equation*} 
If $K$ is isotropic then the probability measure $\mu_K$, with density $L_K^n\mathds{1}_{\frac{K}{L_K}}$, satisfies $\Psi_2(K)\approx L_K\Psi_2(\mu_K)\approx \Psi_2(\mu_K)$.

We shall use an observation from \cite{GPV11}. A proof of the next proposition, in this exact formulation, can be found in \cite[Proposition~8.1.7]{BGVV-book}
(we also take into account the fact that all isotropic constants are now known to be of order $1$ up to an absolute constant).

\begin{proposition}\label{prop:Zq-control}
Let $\mu$ be an isotropic log-concave probability measure on $\mathbb{R}^n$. Then there exists a centered convex body $T=T_\mu$ in $\mathbb{R}^n$ with
the following property: For every $1\ls q\ls n$,
\begin{equation*}c_1Z_q(T)\subseteq Z_q(\mu)+\sqrt{q}B_2^n\subseteq c_2 Z_q(T),\end{equation*} 
where $c_1,c_2>0$ are absolute constants. In particular, $Z_q(\mu)\subseteq c_2Z_q(T)$ for every $1\ls q\ls n$.
\end{proposition}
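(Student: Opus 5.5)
The plan is to smooth $\mu$ by a Gaussian convolution and then to pass from the resulting log-concave measure to a convex body using K.~Ball's bodies $K_{n+1}(f)$.

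\emph{Step 1: the convolved measure.} Let $\nu=\mu*\gamma_n$, the law of $X+G$ with $X\sim\mu$ and $G\sim\gamma_n$ independent. By Pr\'ekopa--Leindler, $\nu$ is log-concave. It is centered, with ${\rm Cov}(\nu)=2I_n$.

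For $\theta\in\mathbb{R}^n$, Minkowski's inequality gives
\[
\|\langle X+G,\theta\rangle\|_q\ls \|\langle X,\theta\rangle\|_q+\|\langle G,\theta\rangle\|_q .
\]
For the reverse bound, condition on $X$. Jensen's inequality, using that $G$ is centered, gives $\mathbb{E}_G|a+\langle G,\theta\rangle|^q\gr |a|^q$ for every fixed $a$. Hence $\|\langle X+G,\theta\rangle\|_q\gr\|\langle X,\theta\rangle\|_q$, and symmetrically $\|\langle X+G,\theta\rangle\|_q\gr\|\langle G,\theta\rangle\|_q$.

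It follows that $h_{Z_q(\nu)}\approx h_{Z_q(\mu)}+h_{Z_q(\gamma_n)}$ up to a factor $2$. Since $Z_q(\gamma_n)\approx\sqrt q\,B_2^n$, this gives
\[
Z_q(\nu)\approx Z_q(\mu)+\sqrt q\,B_2^n\qquad\text{for all } q\gr1 .
\]

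\emph{Step 2: Ball's body.} Let $f$ be the density of $\nu$ and set $T_0=K_{n+1}(f)$. Ball's body is convex, and the standard comparison (see \cite{BGVV-book}, Chapter~2) gives
\[
Z_q(\overline{T_0})\approx f(0)^{1/n}\,Z_q(\nu)\qquad\text{for } 1\ls q\ls n .
\]
This comes from writing $\int_{T_0}|\langle x,\theta\rangle|^q\,dx$ in polar coordinates and comparing it with $\int|\langle x,\theta\rangle|^q f(x)\,dx$. One also uses $K_{n+q}(f)\approx K_{n+1}(f)$ for $q\ls n$ and ${\rm vol}_n(K_{n+1}(f))^{1/n}\approx f(0)^{-1/n}$.

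Since $\nu$ is centered, $f(0)^{1/n}\approx\|f\|_\infty^{1/n}$. The latter equals $L_\nu\det({\rm Cov}\,\nu)^{-1/2n}=L_\nu/\sqrt2$. Now $L_\nu\approx1$, by the boundedness of isotropic constants (Klartag--Lehec together with Ball's reduction). So every normalizing factor is an absolute constant.

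\emph{Step 3: centering.} $T_0$ need not be centered, so translate $\overline{T_0}$ to its barycenter to get $T$. For $q\ls n$, the $L_q$-centroid bodies of a convex body taken about $0$ and about its barycenter are comparable up to absolute constants. This holds because the barycenter of $K_{n+1}(f)$ lies in a bounded multiple of $Z_1$ when $f$ is centered, which is a standard fact. Combining Steps 1--3 gives
\[
c_1Z_q(T)\subseteq Z_q(\mu)+\sqrt q\,B_2^n\subseteq c_2Z_q(T)\qquad (1\ls q\ls n).
\]
The final assertion $Z_q(\mu)\subseteq c_2Z_q(T)$ is immediate from the right-hand inclusion.

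The main obstacle is Step~2: tracking the constants in the Ball-body comparison uniformly for all $1\ls q\ls n$. This covers both the comparison $K_{n+q}(f)\approx K_{n+1}(f)$ and the normalization through $f(0)^{1/n}$. It is exactly here that the range restriction $q\ls n$ and the boundedness of $L_\nu$ are used.
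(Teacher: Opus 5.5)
Your argument is correct and uses the same construction the paper relies on: the paper simply cites Proposition~8.1.7 of the BGVV book, with $T=\overline{K_{n+1}}(\mu\ast\gamma_n)$. Step~1 gives $Z_q(\mu\ast\gamma_n)\approx Z_q(\mu)+\sqrt q\,B_2^n$, and the Ball-body comparison $Z_q(\overline{K_{n+1}}(f))\approx f(0)^{1/n}Z_q(f)$ for $1\ls q\ls n$ finishes the proof. The uniformity you flagged comes from $K_{n+1}(f)$ and $K_{n+q}(f)$ agreeing up to a factor $e^{O(q/n)}$; a mere absolute-constant equivalence would cost $C^{(n+q)/q}$ after normalizing. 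Step~3 is unnecessary, since $K_{n+1}(f)$ is exactly centered whenever $f$ is: in polar coordinates, $\int_{K_{n+1}(f)}x\,dx=f(0)^{-1}\int xf(x)\,dx=0$, which is the reason for the exponent $n+1$. Likewise, $f(0)^{1/n}\approx 1$ needs no bound on isotropic constants: $\|f\|_\infty\ls(2\pi)^{-n/2}$ for a Gaussian convolution, while $f(0)\gr e^{-n}\|f\|_\infty\gr c^n$ for any centered log-concave density with covariance $2I_n$.
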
 

The convex body $T$ with these properties can be taken to be
\[T=\overline{K_{n+1}}(\mu\ast\gamma_n),\]
where $\gamma_n$ denotes the standard Gaussian measure on $\mathbb{R}^n$.
See~\cite[Chapter~2]{BGVV-book} for the construction of
K.~Ball's bodies $K_p(\mu)$.

\medskip

We now extend the volume estimate for $\Psi_2$ from convex bodies to
log-concave measures.

\begin{theorem}\label{th:vol-ratio-psi2-body-measure}
Let $\mu$ be a centered log-concave probability measure on $\mathbb{R}^n$. Then,
\begin{equation*}\left(\frac{{\rm vol}_n(\Psi_{2}(\mu))}{{\rm vol}_n(Z_{2}(\mu))}\right)^{1/n} \ls C,\end{equation*} 
where $C>0$ is an absolute constant. 
\end{theorem}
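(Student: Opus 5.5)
First, affine invariance: both $\Psi_2(\mu)$ and $Z_2(\mu)$ transform covariantly under $T\in GL_n$, since $Z_q(T_\ast\mu)=T(Z_q(\mu))$ for every $q$ and $h_{\Psi_2(\mu)}$ is a supremum of the $h_{Z_q(\mu)}/\sqrt q$. Hence the volume ratio is $GL_n$-invariant, and we may assume $\mu$ is isotropic. Then $Z_2(\mu)=B_2^n$, and it suffices to show ${\rm vrad}(\Psi_2(\mu))\ls C$. This amounts to ${\rm vol}_n(\Psi_2(\mu))^{1/n}\ls C/\sqrt n$.

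Next, transfer to a convex body using Proposition~\ref{prop:Zq-control}. Let $T=T_\mu$. For every $1\ls q\ls n$ we have $Z_q(\mu)\subseteq c_2Z_q(T)$, so
\[h_{\Psi_2(\mu)}(v)=\sup_{1\ls q\ls n}\frac{h_{Z_q(\mu)}(v)}{\sqrt q}\ls c_2\sup_{1\ls q\ls n}\frac{h_{Z_q(T)}(v)}{\sqrt q}\ls C\,h_{\Psi_2(T)}(v).\]
The last step uses the representation of the $\psi_2$-norm for centered convex bodies recalled before \eqref{eq:convex-hull}. Thus $\Psi_2(\mu)\subseteq C\Psi_2(T)$. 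Since $T$ is a centered convex body of volume $1$, Theorem~\ref{th:vol-ratio-psi2-body} gives
\[{\rm vol}_n(\Psi_2(T))^{1/n}\ls C\,{\rm vol}_n(Z_2(T))^{1/n}.\]

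It remains to show ${\rm vol}_n(Z_2(T))^{1/n}\ls C/\sqrt n$. For $q=2$, Proposition~\ref{prop:Zq-control} gives $c_1Z_2(T)\subseteq Z_2(\mu)+\sqrt2B_2^n=(1+\sqrt2)B_2^n$. Therefore $Z_2(T)\subseteq CB_2^n$, and so ${\rm vol}_n(Z_2(T))^{1/n}\ls C\omega_n^{1/n}\approx 1/\sqrt n$. Combining the three inclusions gives ${\rm vol}_n(\Psi_2(\mu))^{1/n}\ls C'/\sqrt n\approx C''\,{\rm vol}_n(Z_2(\mu))^{1/n}$, which is the claim.

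The only delicate point is the comparison of the support functions of $\Psi_2(T)$ with the supremum over $1\ls q\ls n$. This is exactly the equivalence stated for centered convex bodies in Section~\ref{section:2}, so no real obstacle remains. One alternative was to run the Gaussian correlation argument directly for $\mu$, which would require an analogue of Lemma~\ref{lem:2.6} for measures. I avoid this by passing through $T_\mu$.
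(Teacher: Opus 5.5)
Your proposal is correct and follows essentially the same route as the paper's proof. You reduce to isotropic $\mu$, use Proposition~\ref{prop:Zq-control} to get $\Psi_2(\mu)\subseteq C\,\Psi_2(T_\mu)$, and apply Theorem~\ref{th:vol-ratio-psi2-body} to $T_\mu$. The one cosmetic difference is in the last step: the paper controls ${\rm vol}_n(Z_2(T))=\omega_nL_T^n$ through the boundedness of $L_T$, whereas you read off $Z_2(T)\subseteq CB_2^n$ directly from the $q=2$ case of the same proposition, which is equally valid.
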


\begin{proof}By affine invariance, we may assume that $\mu$ is isotropic. Let
$T=T_\mu$ be the convex body given by Proposition~\ref{prop:Zq-control}.
From the definition of $\Psi_2$ and Proposition~\ref{prop:Zq-control}, we have
\begin{equation}\label{eq:body-measure}\Psi_2(\mu)\subseteq c_2\Psi_2(T).\end{equation} 
Since $\mu$ is isotropic, we have $Z_2(\mu)=B_2^n$. Moreover, ${\rm vol}_n(Z_2(T))=\omega_nL_T^n$ and hence,
\[{\rm vol}_n(Z_2(T))={\rm vol}_n(Z_2(\mu))L_T^n.\]
Combining the above and applying Theorem~\ref{th:vol-ratio-psi2-body} to the centered convex body $T$, we obtain
\[\left(\frac{{\rm vol}_n(\Psi_{2}(\mu))}{{\rm vol}_n(Z_{2}(\mu))}\right)^{1/n}
\ls c_2L_T\left(\frac{{\rm vol}_n(\Psi_2(T))}{{\rm vol}_n(Z_2(T))}\right)^{1/n}
\ls c_2CL_T.\]
Since $L_T$ is bounded by an absolute constant, this completes the proof. 
\end{proof}

\medskip 

We now prove Theorem~\ref{th:vk-psi2-body}, which gives an upper bound for
the volume of projections of $\Psi_2(K)$ in the isotropic case.

\begin{proof}[Proof of Theorem~$\ref{th:vk-psi2-body}$]
Let $F\in G_{n,m}$. By the projection property of centroid bodies, \eqref{eq:marginal-Zq}, we have
\begin{align*}P_F(\Psi_2(K))&\approx {\rm conv}\left\{\frac{P_F(Z_p(K))}{\sqrt p}:p=2^k,\ k=0,1,\ldots,\lfloor\log_2n\rfloor\right\}\\
&={\rm conv}\left\{\frac{L_KZ_p(\pi_F(\mu_K))}{\sqrt p}:p=2^k,\ k=0,1,\ldots,\lfloor\log_2n\rfloor\right\}.
\end{align*}
If $p>m$, the moment comparison inequality \eqref{eq:zq-mu-inclusions} in dimension $m$ gives
\[Z_p(\pi_F(\mu_K))\subseteq c\frac{p}{m}Z_m(\pi_F(\mu_K)).\]
Hence,
\[\frac{Z_p(\pi_F(\mu_K))}{\sqrt p}\subseteq c\sqrt{\frac pm}\frac{Z_m(\pi_F(\mu_K))}{\sqrt m}.\]
Since $p\ls n$, we obtain
\[\frac{Z_p(\pi_F(\mu_K))}{\sqrt p}\subseteq c\sqrt{\frac nm}\frac{Z_m(\pi_F(\mu_K))}{\sqrt m}.\]
Therefore,
\begin{align*}
P_F(\Psi_2(K))&\subseteq c\sqrt{\frac nm}\,L_K\,{\rm conv}\left\{\frac{Z_p(\pi_F(\mu_K))}{\sqrt p}: p=2^k,\ k=0,1,\ldots,\lfloor\log_2m\rfloor\right\}\\
&\subseteq c\sqrt{\frac nm}\,L_K\Psi_2(\pi_F(\mu_K)).
\end{align*}
The marginal $\pi_F(\mu_K)$ is isotropic on $F$. Applying Theorem~\ref{th:vol-ratio-psi2-body-measure} in dimension $m$ gives
\[{\rm vrad}(\Psi_2(\pi_F(\mu_K)))\ls C.\]
Consequently,
\[{\rm vrad}(P_F(\Psi_2(K)))\ls C\sqrt{\frac nm},\]
which proves the theorem.
\end{proof}

Choosing $m=1$ in Theorem~\ref{th:vk-psi2-body} we see that ${\rm vol}_1(P_F(\Psi_2(K)))\ls 2C\sqrt{n}$ for every $F\in G_{n,1}$. Since
$\Psi_2(K)$ is symmetric, this is equivalent to $h_{\Psi_2(K)}(v)\ls C\sqrt{n}$ for every $v\in S^{n-1}$. Hence, we obtain the next fact.

\begin{corollary}\label{cor:radius}
Let $K$ be an isotropic convex body in $\mathbb{R}^n$. Then $R(\Psi_2(K))\ls C\sqrt{n}$, where $C>0$ is an absolute constant.
\end{corollary}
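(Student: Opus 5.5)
The corollary follows from the case $m=1$ of Theorem~\ref{th:vk-psi2-body}, combined with symmetry and the elementary duality between support functions and outer radius. Fix $v\in S^{n-1}$ and let $F={\rm span}\{v\}\in G_{n,1}$. The projection $P_F(\Psi_2(K))$ is a symmetric segment in $F$. Its endpoints are $\pm h_{\Psi_2(K)}(v)\,v$, since $\max_{x\in\Psi_2(K)}\langle x,v\rangle=h_{\Psi_2(K)}(v)$ and the body is symmetric. Hence
\[{\rm vol}_1(P_F(\Psi_2(K)))=2h_{\Psi_2(K)}(v),\qquad \omega_1=2,\]
so ${\rm vrad}(P_F(\Psi_2(K)))=h_{\Psi_2(K)}(v)$. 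Theorem~\ref{th:vk-psi2-body} with $m=1$ then gives $h_{\Psi_2(K)}(v)\ls C\sqrt n$ for every $v\in S^{n-1}$.

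Next I would convert this into a bound on the outer radius. For any convex body $L$ containing the origin,
\[R(L)=\max_{x\in L}|x|=\max_{x\in L}\max_{v\in S^{n-1}}\langle x,v\rangle=\max_{v\in S^{n-1}}h_L(v).\]
Applying this to $L=\Psi_2(K)$ yields $R(\Psi_2(K))\ls C\sqrt n$. There is no real obstacle here; the only point to check is that the case $m=1$ is genuinely covered by Theorem~\ref{th:vk-psi2-body}.

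In that proof, for $m=1$ the marginal $\pi_F(\mu_K)$ is a one-dimensional isotropic log-concave measure. Theorem~\ref{th:vol-ratio-psi2-body-measure} in dimension one then states that $\Psi_2(\pi_F(\mu_K))$ has length bounded by an absolute constant. This is clear directly as well, since in dimension one the supremum defining $\Psi_2$ is over $1\ls q\ls 1$, that is, only $q=1$.

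If one prefers to avoid the projection theorem entirely, a direct alternative is available. By \eqref{eq:stable} and \eqref{eq:zq-mu-inclusions},
\[\frac{\|\langle\cdot,v\rangle\|_p}{\sqrt p}\ls \frac{c\,p\,\|\langle\cdot,v\rangle\|_2}{2\sqrt p}\ls c\sqrt{n}\,L_K\quad\text{for } p\ls n,\]
and $L_K\ls C$. This gives the same conclusion via \eqref{eq:convex-hull}.
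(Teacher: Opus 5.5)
Your proof is correct and is the paper's argument. You take the case $m=1$ of Theorem~\ref{th:vk-psi2-body}, use symmetry to get ${\rm vrad}(P_F(\Psi_2(K)))=h_{\Psi_2(K)}(v)$, and conclude with $R(L)=\max_{v\in S^{n-1}}h_L(v)$. Your direct alternative, via $\|\langle\cdot,v\rangle\|_p\ls cp\,L_K$ for $p\ls n$ together with \eqref{eq:convex-hull}, is also valid; it is essentially the paper's second remark, which instead interpolates between $\|\langle\cdot,v\rangle\|_{\psi_1}\ls cL_K$ and $\|\langle\cdot,v\rangle\|_\infty\ls R(K)\ls cnL_K$.
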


The same fact can be also seen as follows: since $K$ is isotropic, standard estimates for isotropic convex bodies (that can be found e.g.~in \cite[Chapter~3]{BGVV-book}) give
$R(K)\ls c_1 nL_K$ and $\|\langle\cdot,v\rangle\|_{\psi_1}\ls c_2L_K$ for all $v\in S^{n-1}$, where
\[\|\langle\cdot,v\rangle\|_{\psi_1}=\inf\left\{t>0:\mathbb E\exp\!\left(|\langle \cdot,v\rangle |/t\right)\ls 2\right\}.\]
Consequently,
\[\|\langle\cdot,v\rangle\|_{\psi_2}\ls C\sqrt{\|\langle\cdot,v\rangle\|_{\psi_1}\|\langle\cdot,v\rangle\|_\infty}
\ls C_1\sqrt{n}L_K \ls C\sqrt{n}.\]

\medskip 

\noindent \textbf{\S~3.2. Construction of subgaussian bases.}
We next use Theorem~\ref{th:vk-psi2-body} to construct an orthonormal basis
consisting of directions with controlled subgaussian norms, following the
approach of~\cite{Letwin-Mikulincer-2026}.

\begin{proof}[Proof of Theorem~$\ref{th:onb-psi2}$]
Since $Z_2(K)=L_KB_2^n\approx B_2^n$, by Theorem~\ref{th:vol-ratio-psi2-body} we have that 
$r:={\rm vrad}(\Psi_2(K))\ls C$ for an absolute constant $C>0$. Note that 
\begin{equation}\label{eq:min}\min\{h_{\Psi_2(K)}(v):v\in S^{n-1}\}\ls r.\end{equation} 
Otherwise, there exists $s>r$ such that
$h_{\Psi_2(K)}(v)\gr s=h_{sB_2^n}(v)$ for all $v\in S^{n-1}$, which implies $\Psi_2(K)\supseteq sB_2^n$ and hence
${\rm vrad}(\Psi_2(K))\gr s>r$, a contradiction. 
From \eqref{eq:min} we see that there exists $v_1\in S^{n-1}$ such that
\[\|\langle\cdot,v_1\rangle\|_{\psi_2}\ls r={\rm vrad}(\Psi_2(K))\ls C .\]

We proceed inductively. Assume that $v_1,\ldots,v_{k-1}$ have been chosen so that they form an orthonormal sequence and satisfy
\[\|\langle\cdot,v_i\rangle\|_{\psi_2}\ls C\sqrt{\frac{n}{n-i+1}},\qquad i=1,\ldots,k-1 .\]
Let $F_k=({\rm span}\{v_1,\ldots,v_{k-1}\})^\perp $. Then $F_k\in G_{n,n-k+1}$. By Theorem~\ref{th:vk-psi2-body},
\[{\rm vrad}(P_{F_k}(\Psi_2(K)))\ls C\sqrt{\frac{n}{n-k+1}} .\]
Since $P_{F_k}(\Psi_2(K))$ is a convex body in $F_k$, there exists a vector $v_k\in S^{n-1}\cap F_k$ such that
\[h_{P_{F_k}(\Psi_2(K))}(v_k)\ls C\sqrt{\frac{n}{n-k+1}} .\]
Because $v_k\in F_k$,
\[h_{P_{F_k}(\Psi_2(K))}(v_k)=h_{\Psi_2(K)}(v_k),\]
and therefore
\[\|\langle\cdot,v_k\rangle\|_{\psi_2}\ls C\sqrt{\frac{n}{n-k+1}} .\]
By construction, the vectors remain orthonormal. Iterating the argument gives an orthonormal basis
$\{v_1,\ldots,v_n\}$ with the desired estimates.
\end{proof}

\medskip 

\noindent \textbf{\S~3.3. Mean width and mean norm.} Recall from \eqref{eq:Bizeul-M} that $w(K)\ls C\sqrt{n\ln(en)}$ for every isotropic
convex body $K$ in $\mathbb{R}^n$. We begin with a version of this result for isotropic log-concave probability measures.

\begin{lemma}\label{lem:measure-bizeul}Let $\nu$ be an isotropic log-concave probability measure on $\mathbb{R}^m$. Then
\begin{equation}\label{eq:width-measure}w(Z_m(\nu))\ls C\sqrt{m\ln(em)}.\end{equation}
\end{lemma}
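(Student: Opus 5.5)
We transfer Bizeul's estimate \eqref{eq:Bizeul-M} from isotropic convex bodies to the isotropic measure $\nu$. The tool is the body $T=T_\nu$ of Proposition~\ref{prop:Zq-control}, applied in dimension $m$. That proposition gives $Z_q(\nu)\subseteq c_2Z_q(T)$ for every $1\ls q\ls m$; in particular, taking $q=m$,
\[w(Z_m(\nu))\ls c_2\,w(Z_m(T)).\]
It therefore suffices to bound the mean width of $Z_m(T)$.

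\textbf{Step 1: reduce to the width of $T$.} By \eqref{eq:stable}, $Z_m(T)\subseteq Z_\infty(T)={\rm conv}\{T,-T\}$. Hence
\[w(Z_m(T))\ls w(T)+w(-T)=2w(T),\]
and it remains to bound $w(T)$.

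\textbf{Step 2: show that $T$ is close to isotropic.} Proposition~\ref{prop:Zq-control} with $q=2$ gives
\[c_1Z_2(T)\subseteq Z_2(\nu)+\sqrt2 B_2^m=(1+\sqrt2)B_2^m\subseteq c_2Z_2(T).\]
So $Z_2(T)\approx B_2^m$. Write $T=S(K')$, where $K'$ is isotropic and $S$ is a linear map, and choose $S$ positive definite, which is possible since the isotropic image is unique up to orthogonal maps. Then $Z_2(T)=S(Z_2(K'))=L_{K'}S(B_2^m)$. Since $L_{K'}\approx1$ by the resolution of the hyperplane conjecture, we get $S(B_2^m)\approx B_2^m$, that is, $\|S\|_{\rm op}\ls C$.

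\textbf{Step 3: apply Bizeul and conclude.} Using the property of mean width under linear maps recalled in Section~\ref{section:2} together with \eqref{eq:Bizeul-M},
\[w(T)\ls \|S\|_{\rm op}\,w(K')\ls C\sqrt{m\ln(em)}.\]
Chaining Steps 1–3 yields
\[w(Z_m(\nu))\ls 2c_2\,C\sqrt{m\ln(em)},\]
which is \eqref{eq:width-measure}.

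\textbf{Main obstacle.} The delicate point is Step 2, which must extract from Proposition~\ref{prop:Zq-control} that $T$ is an almost-isotropic position, with operator-norm control of the map to an isotropic body. The bound above gives $Z_2(T)\approx B_2^m$, while $T$ has volume $1$ as the normalized body $\overline{K_{m+1}}(\nu\ast\gamma_m)$. This yields a two-sided comparison of the covariance of $T$ with the identity, and so $\|S\|_{\rm op}\ls C$.

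If this comparison turns out to be awkward, there is a fallback that avoids $T$. Use $Z_m(\nu)\approx Z_m(\mu_K)$ for a convex body $K$ obtained from Ball's body $K_{m+1}(\nu)$ after an isotropic normalization; the comparison $Z_q(\nu)\approx Z_q(K_{m+q}(\nu))$ for $q\ls m$ is standard in \cite{BGVV-book}. Then argue exactly as in Steps 1 and 3.
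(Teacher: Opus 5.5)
Your argument is correct, but it goes through a different body than the paper's. Your fallback is essentially the paper's proof.

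\textbf{The paper's route.} It works directly with Ball's body $K_{m+1}(\nu)$. It normalizes $M=aK_{m+1}(\nu)$ to volume one and cites \cite[Proposition~2.5.12]{BGVV-book} for the fact that $M$ is almost isotropic. Bizeul's bound then gives $w(K_{m+1}(\nu))\ls C\sqrt{m\ln(em)}$. Finally it invokes the two-sided comparison $Z_m(\nu)\approx Z_m(M)$ from \cite[Theorem~5.1.7]{BGVV-book} together with $Z_m(M)\subseteq{\rm conv}\{M,-M\}$.

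\textbf{Your route.} You take the body $T=\overline{K_{m+1}}(\nu\ast\gamma_m)$ of Proposition~\ref{prop:Zq-control} and need only the one-sided inclusion $Z_m(\nu)\subseteq c_2Z_m(T)$ at $q=m$. You then prove almost-isotropicity of $T$ yourself from the $q=2$ instance: $Z_2(T)\subseteq\frac{1+\sqrt2}{c_1}B_2^m$ and $Z_2(T)=L_{K'}S(B_2^m)$ give $\|S\|_{\rm op}\ls C$. For this step only the classical lower bound $L_{K'}\gr c$ is needed, not the resolution of the hyperplane conjecture. Positive-definiteness of $S$ is also unnecessary.

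\textbf{What each buys.} Your version is economical within the paper: everything follows from a proposition already stated, and the almost-isotropic property takes two lines instead of a citation. The paper's version avoids the Gaussian convolution and yields the two-sided equivalence $Z_m(\nu)\approx Z_m(M)$. The same Ball-body machinery (including \cite[Theorem~5.1.7]{BGVV-book}) is of course hidden inside Proposition~\ref{prop:Zq-control}.

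\textbf{Two small corrections.}
\begin{itemize}
\item The inclusion $Z_m(T)\subseteq Z_\infty(T)$ is just monotonicity of $L_p$-norms for a probability measure. Citing \eqref{eq:stable} is misplaced, since that gives the reverse inclusion up to a constant.
\item Your $q=2$ step needs $m\gr2$. The case $m=1$ is trivial, because then $w(Z_1(\nu))=\|x\|_{L_1(\nu)}\ls\|x\|_{L_2(\nu)}=1$.
\end{itemize}
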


\begin{proof}Let $f_{\nu}$ denote the density of $\nu$. We shall use a number of known facts abour Ball's bodies $K_p(\nu)$ associated with $\nu$. 
Recall that, for any $p>0$, the convex body $K_p(\nu)$ has radial function
$$\rho_{K_p(\nu)}(x)=\left (\frac{1}{f_{\nu}(0)}\int_0^{\infty}pr^{p-1}f_{\nu}(rx)\,dr\right )^{1/p}$$ for $x\neq 0$.
We refer the reader to \cite{BGVV-book} for more details about the family $\{K_p(\nu)\}_{p>0}$. From \cite[Proposition~2.5.8]{BGVV-book} we have 
$$f_{\nu}(0) \vol_m(K_{m+1}(\nu)) \approx 1.$$
Then, \cite[Proposition~2.5.12]{BGVV-book} shows that if we set 
$$a:=\vol_m(K_{m+1}(\nu))^{-1/m}\approx f_{\nu}(0)^{1/m}\approx L_{\nu}\approx 1$$ 
then $M=aK_{m+1}(\nu)$ has volume one and is almost isotropic. 
This means (see \cite[Definition~2.5.11]{BGVV-book}) that if $T\in SL_m$ is chosen so that $T(M)$ is isotropic then $T(B_2^m)\approx B_2^m$.
By Bizeul's estimate,
\begin{equation}\label{eq:kball-width}w(K_{m+1}(\nu))\approx w(T(M))\ls C\sqrt{m\ln(em)}.\end{equation}
On the other hand, \cite[Theorem~5.1.7]{BGVV-book} and the fact that $f_{\nu}(0)^{1/m}\approx L_{\nu}\approx 1$ imply
$$Z_m(\nu)\approx Z_m(M).$$
Since $Z_m(M)\subseteq \operatorname{conv}\{M,-M\}$, we conclude that $w(Z_m(\nu))\ls C_1\,w(K_{m+1}(\nu))$, and the lemma follows from \eqref{eq:kball-width}.
\end{proof}

Using Lemma~\ref{lem:measure-bizeul} we obtain the following consequence of Bizeul's sharp bound. 

\begin{lemma}\label{lem:Zq-width}Let $K$ be an isotropic convex body in $\mathbb{R}^n$. For every $1\ls q\ls n$,
$$w(Z_q(K))\ls C\sqrt{q\ln(eq)}.$$
\end{lemma}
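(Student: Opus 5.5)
The plan is to reduce to Lemma~\ref{lem:measure-bizeul} by averaging over $q$-dimensional projections. Fix $1\ls q\ls n$; we may assume $q$ is an integer, since $Z_q(K)\approx Z_{\lceil q\rceil}(K)$ by \eqref{eq:zq-mu-inclusions}. Mean width is an average of one-dimensional widths, so it can be written as an average over the Grassmannian:
\[
w(Z_q(K))=\int_{G_{n,q}} w\bigl(P_F(Z_q(K))\bigr)\,d\nu_{n,q}(F)\cdot \frac{w(B_2^n)}{w(P_FB_2^n)}\quad\text{(with the appropriate normalization)}.
\]
Concretely, for $\xi\in S^{n-1}$ we have $h_{Z_q(K)}(\xi)=h_{P_F Z_q(K)}(\xi)$ whenever $\xi\in F$. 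Averaging $\xi$ uniformly on $S_F=S^{n-1}\cap F$ and then over $F\in G_{n,q}$ reproduces $\sigma$. Hence
\[
w(Z_q(K))=\int_{G_{n,q}}\int_{S_F} h_{P_F(Z_q(K))}(\xi)\,d\sigma_F(\xi)\,d\nu_{n,q}(F)=\int_{G_{n,q}} w_F\bigl(P_F(Z_q(K))\bigr)\,d\nu_{n,q}(F),
\]
where $w_F$ denotes the mean width computed inside $F$. This identity is exact; no dimensional factor enters, because both sides use the normalized measure on unit spheres.

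Next I apply \eqref{eq:marginal-Zq}: $P_F(Z_q(K))=L_K\,Z_q(\pi_F(\mu_K))$, and $\pi_F(\mu_K)$ is an isotropic log-concave probability measure on $F\cong\mathbb{R}^q$. Lemma~\ref{lem:measure-bizeul} in dimension $m=q$ gives
\[
w_F\bigl(Z_q(\pi_F(\mu_K))\bigr)\ls C\sqrt{q\ln(eq)}.
\]
Combined with $L_K\ls C$, this yields $w_F(P_F(Z_q(K)))\ls C'\sqrt{q\ln(eq)}$ uniformly in $F$. Integrating over $G_{n,q}$ then proves the lemma.

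The only points needing care are routine. The first is that $q\ls n$ is respected when we pass to the marginal: we use the centroid body of order equal to the dimension $q$, which is exactly the setting of Lemma~\ref{lem:measure-bizeul}. The second is the rounding of non-integer $q$, which costs only an absolute constant. I expect no real obstacle. The substantive input, Bizeul's bound transferred to measures, is already contained in Lemma~\ref{lem:measure-bizeul}, and the main step is simply recognizing that the mean width of $Z_q$ is an average of the mean widths of its $q$-dimensional projections.
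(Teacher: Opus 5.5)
Your proposal is correct and follows essentially the same route as the paper. The paper also reduces to integer $q$, writes $w(Z_q(K))$ as the Grassmannian average of $w_F(P_F(Z_q))$, applies \eqref{eq:marginal-Zq} together with Lemma~\ref{lem:measure-bizeul} to the isotropic marginal $\pi_F(\mu_K)$ in dimension $q$, and integrates. Your first display with the normalization factor is superfluous, since that factor equals $1$; the exact identity you state next is all you need.
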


\begin{proof}Without loss of generality, we may assume that $q$ is an integer. Let $\mu$ be the isotropic measure with density $L_K^n\mathds{1}_{\frac{K}{L_K}}$.
For any $F\in G_{n,q}$ we have that $\pi_F(\mu)$ is isotropic on the $q$-dimensional space $F$. Since, $P_F(Z_q(\mu))=Z_q(\pi_F(\mu))$, applying \eqref{eq:width-measure}
for $\pi_F(\mu)$ we get
$$w_F(P_F(Z_q(\mu)))=w_F(Z_q(\pi_F(\mu)))\ls C\sqrt{q\ln(eq)}.$$
Now, we use the identity
\begin{align*}w(A) &=\int_{G_{n,q}}\int_{S_F}h_A(\xi)\,d\sigma_F(\xi)\,d\nu_{n,q}(F)=\int_{G_{n,q}}\int_{S_F}h_{P_F(A)}(\xi)\,d\sigma_F(\xi)\,d\nu_{n,q}(F)\\
&=\int_{G_{n,q}}w_F(P_F(A))\,d\nu_{n,q}(F)\end{align*}
to write
$$w(Z_q(\mu))=\int_{G_{n,q}}w_F(P_F(Z_q(\mu)))\,d\nu_{n,q}(F)\ls C\sqrt{q\ln(eq)}$$
completing the proof.
\end{proof}

\begin{theorem}\label{th:1.3}Let $K$ be an isotropic convex body in ${\mathbb{R}}^n$. Then,
\[w(\Psi_2(K))\ls C\sqrt{\ln(en)},\]
where $C>0$ is an absolute constant.
\end{theorem}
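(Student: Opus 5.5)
The plan is to reduce the mean width of $\Psi_2(K)$ to an expected maximum of finitely many functions on the sphere, and to control that maximum by concentration of measure combined with a union-type summation. By \eqref{eq:convex-hull} and the dyadic reduction from the proof of Theorem~\ref{th:vol-ratio-psi2-body},
\[\Psi_2(K)\approx{\rm conv}\left\{\frac{Z_{2^k}(K)}{\sqrt{2^k}}:k=0,1,\ldots,\lfloor\log_2 n\rfloor\right\}.\]
The support function of a convex hull is the maximum of the support functions. Setting
\[f_k(\xi):=\frac{h_{Z_{2^k}(K)}(\xi)}{\sqrt{2^k}},\qquad \xi\in S^{n-1},\]
we therefore get
\[w(\Psi_2(K))\ls C\int_{S^{n-1}}\max_{0\ls k\ls \lfloor\log_2 n\rfloor} f_k(\xi)\,d\sigma(\xi).\]

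First we bound each mean. By Lemma~\ref{lem:Zq-width} with $q=2^k$,
\[\int_{S^{n-1}} f_k\,d\sigma=\frac{w(Z_{2^k}(K))}{\sqrt{2^k}}\ls C\sqrt{\ln(e2^k)}\ls C\sqrt{\ln(en)}.\]
Next we bound each Lipschitz constant. $f_k$ is Lipschitz on $S^{n-1}$ with constant $R(Z_{2^k}(K))/\sqrt{2^k}$. By \eqref{eq:zq-mu-inclusions}, $Z_p(K)\subseteq cpZ_2(K)=cpL_KB_2^n$, so this constant is at most $c\sqrt{2^k}$ (using $L_K\approx1$).

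Then we apply spherical concentration (Lévy's inequality) to each $f_k$. Writing $p=2^k$ and $m_k=\int f_k\,d\sigma$,
\[\sigma\bigl(\{\xi: f_k(\xi)>m_k+t\}\bigr)\ls 2\exp(-c\,n t^2/p).\]
We split the maximum as
\[\max_k f_k\ls \max_k m_k+\sum_k (f_k-m_k)_+ .\]
Integrating the tail bound gives, for each $k$,
\[\int_{S^{n-1}}(f_k-m_k)_+\,d\sigma\ls \int_0^\infty 2e^{-cnt^2/2^k}\,dt\ls C\sqrt{2^k/n}.\]
Summing over the dyadic range $2^k\ls n$ is a geometric series dominated by its last term, so the total is bounded by an absolute constant. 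Combined with $\max_k m_k\ls C\sqrt{\ln(en)}$, this yields $w(\Psi_2(K))\ls C\sqrt{\ln(en)}$.

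The substantive input is Lemma~\ref{lem:Zq-width}, which rests on Bizeul's sharp bound and is already available, so the remaining argument is routine. The point to double-check is the Lipschitz estimate $R(Z_p(K))\ls cp$. Without it the concentration term would not sum to $O(1)$. It is exactly the fact that the deviation scale $\sqrt{p/n}$ is geometric in $k$ that avoids any extra logarithmic loss from taking a maximum over $\log n$ functions.
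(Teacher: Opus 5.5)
Your proof is correct, but it handles the maximum over the dyadic scales differently from the paper.

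The paper dominates $\max_k f_k$ by an $\ell_s$-sum with $s=\ln(en)$ and integrates using H\"older's inequality, which gives $w(\Psi_2(K))\ls c\bigl(\sum_k (w_s(Z_{q_k}(K))/\sqrt{q_k})^s\bigr)^{1/s}$. It then quotes the Litvak--Milman--Schechtman estimate \eqref{eq:LMS}, $w_s(C)\approx\max\{w(C),R(C)\sqrt{s/n}\}$. That estimate is fed by the same two inputs you use: Lemma~\ref{lem:Zq-width} and $R(Z_q(K))\ls cq$. The number of scales then costs only the bounded factor $(\ln(en))^{1/s}$.

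You instead apply spherical concentration (as in Theorem~\ref{th:sphere-expectation}) to each $f_k$ and pay for the maximum with $\sum_k(f_k-m_k)_+$. This sum is $O(1)$ because the fluctuation scales $\sqrt{2^k/n}$ are geometric in $k$.

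Since \eqref{eq:LMS} is itself a consequence of concentration on the sphere, the two arguments share the same core. The paper's version is shorter because it cites a known theorem. Yours is self-contained and yields the slightly more informative bound $w(\Psi_2(K))\ls C\max_k w(Z_{2^k}(K))/\sqrt{2^k}+C'$, using only $R(Z_q(K))\ls cq$. There, taking the maximum over scales costs only an additive constant. In the paper's computation, with the same radius bound and $s=\ln(en)$, the term $\sqrt{q_ks/n}$ is already of order $\sqrt{\ln(en)}$ at the top scale, regardless of the mean widths.

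For the theorem as stated this makes no difference, since Lemma~\ref{lem:Zq-width} already contributes $\sqrt{\ln(en)}$ at $q\approx n$.
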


\begin{proof}We start from the formula
\begin{equation*}\Psi_2(K)\approx {\rm conv} \left\{\frac{Z_{2^k}(K )}{\sqrt{2^k}}: k=0,1,\ldots ,\lfloor\log_{2}n\rfloor \right\}. \end{equation*}
This implies that for any $s\gr 1$ we have
\[h_{\Psi_2(K)}(v)\approx \max_{1\ls k\ls \lfloor\log_2n\rfloor}\frac{h_{Z_{2^k}}(v)}{\sqrt{2^k}}
\ls c_5\left(\sum_{k=0}^{\lfloor\log_2n\rfloor}\frac{h^s_{Z_{2^k}}(v)}{(\sqrt{2^k})^s}\right)^{1/s}.\]
We set $q_k=2^k$. Integrating on $S^{n-1}$ and using H\"{o}lder's inequality, we obtain
\begin{equation}\label{eq:main-w}w(\Psi_2(K))\ls c_5\left(\sum_{k=0}^{\lfloor\log_2n\rfloor}\frac{1}{(\sqrt{q_k})^s}\int_{S^{n-1}}h^s_{Z_{q_k}}(v)\,d\sigma(v)\right)^{1/s}
=c_5\left(\sum_{k=0}^{\lfloor\log_2n\rfloor}\left(\frac{w_s(Z_{q_k}(K)}{\sqrt{q_k}}\right)^s\right)^{1/s},\end{equation}
where the $s$-mean width $w_s(C)$ of a symmetric convex body $C$ in $\mathbb{R}^n$ is defined in \eqref{eq:q-mean-width}.
These parameters were studied  by Litvak, V.~Milman and Schechtman who showed in \cite{Litvak-VMilman-Schechtman-1998} that
\begin{equation}\label{eq:LMS}w_s(C)\approx \max\bigg\{w(C),\frac{R(C)\sqrt{s}}{\sqrt{n}}\bigg\}
\end{equation} for all $1\ls s\ls n$.

We choose $s=\ln(en)$. We shall use the fact that for every $k$ we have $R(Z_{q_k}(K))\ls c_6q_k$ (see \cite[Chapter~5]{BGVV-book}).
From \eqref{eq:LMS} we get
\begin{equation}\label{eq:LMS-Zq}\frac{w_s(Z_{q_k}(K))}{\sqrt{q_k}}\ls c_8\max\bigg\{ \frac{w(Z_{q_k}(K))}{\sqrt{q_k}},\frac{\sqrt{q_ks}}{\sqrt{n}}\bigg\}
\ls c_9\max\bigg\{ \sqrt{\ln(eq_k)},\sqrt{s}\bigg\}\ls c_{10}\sqrt{\ln(en)}\end{equation}
for every $k=0,1,\ldots ,\lfloor\log_2n\rfloor$. 
Since there are only $O(\ln(en))$
dyadic indices and $s\approx \ln(en)$, the factor $\ln^{1/s}(en)$ is bounded by an absolute
constant. Going back to \eqref{eq:main-w} we conclude the proof.
\end{proof}

We also observe that $\Psi_2(K)$ has bounded  $M$-parameter. From $\Psi_2(K)\supseteq c\,Z_2(K)$ we readily see that, for every isotropic convex body $K$ in $\mathbb{R}^n$,
\[M(\Psi_2(K))=\int_{S^{n-1}}\|\xi\|_{\Psi_2(K)}\,d\sigma(\xi)\ls C.\]
Combining this estimate with Theorem~\ref{th:1.3}, we obtain
\begin{equation}\label{eq:ell-psi}M(\Psi_2(K))\,w(\Psi_2(K))\ls C\sqrt{\ln(en)},\end{equation}
which means that the subgaussian body of an isotropic convex body is essentially in the $\ell$-position.

\begin{remark}\rm Similar arguments extend Lemma~\ref{lem:Zq-width} and Theorem~\ref{th:1.3} to isotropic log-concave probability measures.
\end{remark}

\begin{proposition}\label{prop:width-measures}Let $\mu$ be an isotropic log-concave probability measure on $\mathbb{R}^n$. Then
\begin{equation}\label{eq:Zq-width-measure}w(Z_q(\mu))\ls C\sqrt{q\ln(eq)},\qquad 1\ls q\ls n\end{equation}
and 
\[w(\Psi_2(\mu))\ls C\sqrt{\ln(en)},\]
where $C>0$ is an absolute constant.
\end{proposition}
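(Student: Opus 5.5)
The plan is to repeat the proofs of Lemma~\ref{lem:Zq-width} and Theorem~\ref{th:1.3} with the convex body $K$ replaced by the measure $\mu$.

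\emph{Step 1: the centroid bodies.} Fix $1\ls q\ls n$. As in the proof of Lemma~\ref{lem:Zq-width}, we may assume that $q$ is an integer. For $q\ls n$ not an integer, replace $q$ by $\lceil q\rceil\ls n$; this changes $Z_q(\mu)$ only by an absolute factor, by \eqref{eq:zq-mu-inclusions}.

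For every $F\in G_{n,q}$ the marginal $\pi_F(\mu)$ is an isotropic log-concave probability measure on the $q$-dimensional space $F$. By \eqref{eq:marginal-Zq} we have $P_F(Z_q(\mu))=Z_q(\pi_F(\mu))$. Lemma~\ref{lem:measure-bizeul}, applied in dimension $m=q$, then gives
\[w_F(P_F(Z_q(\mu)))=w_F(Z_q(\pi_F(\mu)))\ls C\sqrt{q\ln(eq)}.\]
We average this over $G_{n,q}$ using the identity $w(A)=\int_{G_{n,q}}w_F(P_F(A))\,d\nu_{n,q}(F)$ from the proof of Lemma~\ref{lem:Zq-width}. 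This yields \eqref{eq:Zq-width-measure}.

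The only genuinely new input here is Lemma~\ref{lem:measure-bizeul}, which is already stated for measures. Step 1 is therefore routine.

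\emph{Step 2: the subgaussian body.} By definition, $h_{\Psi_2(\mu)}(v)=\sup_{1\ls q\ls n}\|\langle\cdot,v\rangle\|_{L_q(\mu)}/\sqrt q$. By \eqref{eq:zq-mu-inclusions}, the supremum may be restricted to dyadic $q_k=2^k$, $0\ls k\ls\lfloor\log_2n\rfloor$, at the cost of an absolute constant. We then dominate the maximum by an $\ell_s$-sum with $s=\ln(en)$, integrate over $S^{n-1}$ and apply H\"older's inequality, exactly as in \eqref{eq:main-w}. This gives
\[w(\Psi_2(\mu))\ls c\Bigl(\sum_k\bigl(w_s(Z_{q_k}(\mu))/\sqrt{q_k}\bigr)^s\Bigr)^{1/s}.\]

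\emph{Step 3: bounding each term.} We apply the Litvak--Milman--Schechtman estimate \eqref{eq:LMS} to each term. This requires two inputs:
\begin{itemize}
\item the mean-width bound $w(Z_{q_k}(\mu))\ls C\sqrt{q_k\ln(eq_k)}$ from Step 1;
\item a radius bound $R(Z_q(\mu))\ls cq$ for isotropic log-concave $\mu$.
\end{itemize}
The radius bound is the one point to check. It follows from \eqref{eq:zq-mu-inclusions}: for $\xi\in S^{n-1}$,
\[\|\langle\cdot,\xi\rangle\|_{L_q(\mu)}\ls c\,\tfrac q2\,\|\langle\cdot,\xi\rangle\|_{L_2(\mu)}=c\,q/2,\]
so $Z_q(\mu)\subseteq cq\,Z_2(\mu)=cqB_2^n$. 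This is exactly where isotropicity, $Z_2(\mu)=B_2^n$, is used.

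With both inputs, each term is bounded by
\[c\max\bigl\{\sqrt{\ln(eq_k)},\sqrt{q_ks/n}\bigr\}\ls c'\sqrt{\ln(en)}.\]
There are $O(\ln(en))$ terms and $s\approx\ln(en)$, so the factor $(\ln(en))^{1/s}$ is bounded by an absolute constant. This gives $w(\Psi_2(\mu))\ls C\sqrt{\ln(en)}$.

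I do not expect a real obstacle. The body-to-measure transfer that one might worry about is already handled inside Lemma~\ref{lem:measure-bizeul}, via Ball's bodies and Bizeul's estimate. The remaining work is confirming that the radius bound and \eqref{eq:LMS} hold verbatim for measures, and both are general facts: the first from \eqref{eq:zq-mu-inclusions}, and \eqref{eq:LMS} is a statement about arbitrary symmetric convex bodies.
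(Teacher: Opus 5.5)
Your proposal is correct and follows the route the paper intends: the paper gives no separate proof, only the remark that the arguments of Lemma~\ref{lem:Zq-width} and Theorem~\ref{th:1.3} carry over to measures, which is exactly what you do. Lemma~\ref{lem:measure-bizeul} supplies the marginal width bound, and $Z_q(\mu)\subseteq cq\,Z_2(\mu)=cqB_2^n$ replaces $R(Z_q(K))\ls cq$; for $q\ls 2$ this inclusion is just the monotonicity $Z_q(\mu)\subseteq Z_2(\mu)$.
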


\medskip 

\noindent \textbf{\S~3.4. Large sets of subgaussian directions.}
In this subsection we study the distribution of the $\psi_2$-norm over directions. 
Let $K$ be an isotropic convex body in $\mathbb{R}^n$. Brazitikos and Hioni~\cite{Brazitikos-Hioni-2015} obtained
logarithmic estimates for the $\psi_2$-norm of most directions. More precisely, they proved that, for every $a>1$, there exists a set
$V_a\subseteq S^{n-1}$ with $\sigma(V_a)\gr 1-n^{-a}$ such that
\[\|\langle\cdot,v\rangle\|_{\psi_2}\ls C\ln^{3/2}(en)\max\{\sqrt{\ln(en)},\sqrt a\}\]
for every $v\in V_a$.

In view of the bound $w(\Psi_2(K))\ls C\sqrt{\ln(en)}$ of Theorem~\ref{th:1.3}, the next theorem yields that a random orthonormal basis of $\mathbb{R}^n$ consists of
$\psi_2$-directions with better behavior. Theorem~\ref{th:partial-onb} is an immediate consequence of Proposition~\ref{prop:spherical} below, applied e.g. with $\delta =1/2$.

\begin{theorem}\label{th:spherical}Let $K$ be an isotropic convex body in $\mathbb{R}^n$. For every $t>0$ there exists a
subset $V_t\subseteq S^{n-1}$ with
\[\sigma(V_t)\gr 1-\exp(-t^2/2)\]
such that, for every $v\in V_t$,
\begin{equation}\label{eq:onb-bound}
\|\langle\cdot,v\rangle\|_{\psi_2}=h_{\Psi_2(K)}(v)\ls w(\Psi_2(K))+Ct\ls C\big(\sqrt{\ln(en)}+t\big),
\end{equation}
where $c,C>0$ are absolute constants. 
\end{theorem}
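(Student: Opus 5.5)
The plan is to apply concentration of measure on the sphere to the support function $v\mapsto h_{\Psi_2(K)}(v)$. Since $\Psi_2(K)$ is a symmetric convex body, $h_{\Psi_2(K)}$ is a norm on $\mathbb{R}^n$. For $u,v\in\mathbb{R}^n$ the triangle inequality and symmetry give
\[|h_{\Psi_2(K)}(u)-h_{\Psi_2(K)}(v)|\ls h_{\Psi_2(K)}(u-v)\ls R(\Psi_2(K))\,|u-v|.\]
Hence its restriction to $S^{n-1}$ is Lipschitz with constant $b:=R(\Psi_2(K))$. By Corollary~\ref{cor:radius}, $b\ls C\sqrt n$.

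Next I would invoke the standard concentration inequality for Lipschitz functions on $S^{n-1}$ about their mean, in the form
\[\sigma\bigl(\{v:f(v)-\textstyle\int f\,d\sigma> s\}\bigr)\ls \exp(-c\,n s^2/b^2).\]
The inequality is usually stated about the median, but passing from the median to the mean costs only an additive $O(b/\sqrt n)$, which can be absorbed into the constant $C$. Applying this to $f=h_{\Psi_2(K)}$, whose mean is $\int f\,d\sigma=w(\Psi_2(K))$, I would take $s=Cbt/\sqrt n$. The bound $b\ls C\sqrt n$ then makes $s\ls C't$, which is independent of $n$.

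With this choice the exceptional set has measure at most $\exp(-t^2/2)$, once the constants are adjusted so that $cC^2\gr 1/2$. Define $V_t$ as the complement of the exceptional set. Then $\sigma(V_t)\gr 1-\exp(-t^2/2)$, and every $v\in V_t$ satisfies $h_{\Psi_2(K)}(v)\ls w(\Psi_2(K))+Ct$. Combining this with Theorem~\ref{th:1.3}, which gives $w(\Psi_2(K))\ls C\sqrt{\ln(en)}$, yields the second inequality in \eqref{eq:onb-bound}.

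The only point requiring care is the precise form of the concentration inequality (the constant in the exponent and the median-versus-mean issue). I would use the Gaussian-type version stated directly for the mean, for instance via the Gaussian measure and polar integration, or via the log-Sobolev inequality on the sphere. Either gives $\sigma(f-\mathbb{E}f>s)\ls e^{-(n-1)s^2/(2b^2)}$, which suffices after enlarging $C$. No other obstacle is expected: the key input is the $O(\sqrt n)$ Lipschitz constant, which is exactly Corollary~\ref{cor:radius}.
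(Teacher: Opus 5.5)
Your proposal is correct and follows essentially the same route as the paper: the Lipschitz bound $R(\Psi_2(K))\ls C\sqrt n$ from Corollary~\ref{cor:radius}, concentration about the mean on $S^{n-1}$ (the paper cites exactly this mean form, Theorem~\ref{th:sphere-expectation}), $V_t$ defined as the complement of the exceptional set, and Theorem~\ref{th:1.3} for the final inequality. Your derivation $|h(u)-h(v)|\ls h(u-v)\ls R|u-v|$ spells out a step the paper leaves implicit, and using the mean version directly, as you end up proposing, is the right choice: in the median-to-mean detour, the additive $O(1)$ loss cannot be absorbed into $Ct$ for small $t$ without a further argument.
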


The proof of Theorem~\ref{th:spherical} relies on the classical deviation inequality for Lipschitz functions $f:S^{n-1}\to {\mathbb R}$.
(see~\cite[Proposition~5.20]{Aubrun-Szarek-book} for the exact formulation below), which is a consequence of the spherical isoperimetric inequality.

\begin{theorem}\label{th:sphere-expectation} Let $f:S^{n-1}\to {\mathbb R}$ be a Lipschitz continuous function with constant $b$. Then, for every $s>0$,
\begin{equation*}\sigma\left ( \{v\in S^{n-1}:f(v)\gr {\mathbb E}(f)+bs\}\right )\ls \exp (-s^2n/2),\end{equation*}
where ${\mathbb E}(f)$ is the expectation of $f$.
\end{theorem}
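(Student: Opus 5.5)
The statement is a classical consequence of spherical isoperimetry, so the plan is to reduce it to two ingredients: a sharp exponential moment bound for $f$, and Chebyshev's exponential inequality. By homogeneity we may take $b=1$, replacing $f$ by $f/b$ and $s$ by $s$. It therefore suffices to prove the Laplace-transform bound
\[\int_{S^{n-1}}e^{\lambda(f-{\mathbb E}f)}\,d\sigma\ls e^{\lambda^2/(2n)},\qquad \lambda>0 .\]
Given this, Markov's inequality gives $\sigma(f\gr {\mathbb E}f+s)\ls \exp(\lambda^2/(2n)-\lambda s)$, and the choice $\lambda=ns$ yields exactly $\exp(-s^2n/2)$.

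To obtain the Laplace bound with the constant $n$ rather than $n-1$, I would not use intrinsic curvature on $S^{n-1}$, since Bakry--\'Emery or the spherical log-Sobolev constant only give $n-1$ or $n-2$. Instead I would derive it from the Gaussian measure through the polar decomposition. If $G$ is a standard Gaussian vector in $\mathbb{R}^n$, then $\theta=G/|G|$ is distributed according to $\sigma$ and is independent of $|G|$.

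\begin{itemize}
\item[(a)] Extend $f-{\mathbb E}f$ to a function $F$ on $\mathbb{R}^n$ that is $1$-Lipschitz and satisfies ${\mathbb E}[F(G)\mid \theta]\gr c_n\,(f-{\mathbb E}f)(\theta)$ for a factor $c_n$ close to ${\mathbb E}|G|$. The natural candidate is the radial extension $F(x)=\min(|x|,R)\,(f-{\mathbb E}f)(x/|x|)$, truncated at a suitable radius $R$.
\item[(b)] Apply the Gaussian Herbst/log-Sobolev bound ${\mathbb E}e^{\mu F(G)}\ls e^{\mu^2/2}$.
\item[(c)] Use Jensen's inequality in the radial variable, conditionally on $\theta$, to pass from $F(G)$ to $c_n(f-{\mathbb E}f)(\theta)$.
\end{itemize}

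Since $({\mathbb E}|G|)^2=n-1+O(1/n)$ rather than $n$, a direct transfer again loses slightly. I would therefore run the transfer in dimension $n+1$ or apply it to the projection $S^{n}\to S^{n-1}$, tuning it so that the effective variance parameter is exactly $1/n$.

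The main obstacle is this sharp constant in the exponent. The strategy above easily gives $\exp(-(n-1)s^2/2)$ or $\exp(-cns^2)$, and squeezing out exactly $n$ is delicate. If the Gaussian transfer does not close, my fallback is Lévy's isoperimetric inequality in its cap form. For a cap of angular radius $\pi/2-t$ one has the elementary sharp estimate $\sigma(\{x_1\gr \sin t\})\ls e^{-nt^2/2}$, which can be proved by comparing with the Gaussian tail or by direct integration of $(1-u^2)^{(n-3)/2}$. Isoperimetry then gives deviation from the median $M_f$ with the factor $e^{-ns^2/2}$. One must then replace $M_f$ by ${\mathbb E}f$: integrating the two-sided median bound shows $|{\mathbb E}f-M_f|\ls C/\sqrt n$, but this perturbs $s$. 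For that reason I would prefer the Laplace-transform route and would spend the main effort making step (a)--(c) sharp.

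If neither route gives the exact constant cleanly, the remaining option is to invoke the formulation in \cite[Proposition~5.20]{Aubrun-Szarek-book}, as the paper does. That is legitimate here, because the subsequent application in Theorem~\ref{th:spherical} only needs the bound up to absolute constants in the exponent.
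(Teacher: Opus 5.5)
The paper gives no proof of this statement; it is quoted from Aubrun--Szarek (Proposition~5.20). Your last fallback is that same citation, so on that point you agree with the paper. However, none of your self-contained routes proves the inequality as stated, and the main one aims at something that is false in general.

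\textbf{The Laplace-transform reduction.} You require $\int e^{\lambda(f-\mathbb E f)}\,d\sigma\ls e^{\lambda^2b^2/(2n)}$ for all $\lambda>0$. Letting $\lambda\to0$, this forces ${\rm Var}_\sigma(f)\ls b^2/n$. If \emph{Lipschitz} refers to the geodesic distance, this fails. Take $f(x)=\arcsin x_1=\frac{\pi}{2}-d(x,e_1)$: it is $1$-Lipschitz, has mean zero, and ${\rm Var}_\sigma(f)=\int\arcsin^2x_1\,d\sigma>\int x_1^2\,d\sigma=1/n$. So in that generality the exponent $n$ cannot come from any Laplace bound, and no tuning of (a)--(c) will produce it. For the Euclidean metric, your scheme would have to exploit the chordal metric somewhere, and it does not. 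The Euclidean metric is all that Theorem~\ref{th:spherical} needs, since $|h_{\Psi_2(K)}(u)-h_{\Psi_2(K)}(v)|\ls R(\Psi_2(K))\,|u-v|$.

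\textbf{Step (a) and the L\'evy route.} Step (a) is false as stated. With $g=f-\mathbb E f$, the function $F(r\theta)=r\,g(\theta)$ satisfies $|\nabla F|^2=g(\theta)^2+|\nabla_S g(\theta)|^2$. Hence $F$ is only $\sqrt{1+\|g\|_\infty^2}$-Lipschitz, and $\|g\|_\infty$ is genuinely of order $1$; it is close to $\sqrt2$ for the centered chordal distance to a point. The truncation at $R$ does not affect this. Carried out honestly, (a)--(c) give $\int e^{\lambda g}\,d\sigma\ls\exp\bigl(\lambda^2L^2/(2(\mathbb E|G|)^2)\bigr)$ with $L$ an absolute constant, that is, a tail $\exp(-c\,ns^2)$. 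Also, $(\mathbb E|G|)^2=n-\frac12+O(1/n)$, not $n-1+O(1/n)$. Saying you would \emph{run the transfer in dimension $n+1$} is not an argument. The L\'evy route controls deviations from the median. The shift $|\mathbb Ef-M_f|\ls C/\sqrt n$ cannot be reabsorbed into the exact bound $\exp(-s^2n/2)$.

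\textbf{What you actually establish.} Your arguments prove $\sigma(f\gr\mathbb Ef+bs)\ls\exp(-c\,ns^2)$ with an absolute $c>0$ and no prefactor. You are right that this suffices for the paper: substituting $s=t/\sqrt{2cn}$ gives Theorem~\ref{th:spherical} verbatim with a larger $C$. Hence Proposition~\ref{prop:spherical} and Theorem~\ref{th:partial-onb} follow. The precise form, with exponent $s^2n/2$ about the mean, rests on the citation alone, in your proposal just as in the paper.
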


\begin{proof}[Proof of Theorem~$\ref{th:spherical}$]Corollary~\ref{cor:radius} implies that $f(v)=\|\langle \cdot, v\rangle\|_{\psi_2}$ 
is Lipschitz continuous on $S^{n-1}$ with constant $C\sqrt{n}$.
Note that $\mathbb{E}(f)=w(\Psi_2(K))$. Therefore, Theorem~\ref{th:sphere-expectation} implies that
\begin{equation*}\sigma\left ( \{v\in S^{n-1}:\|\langle\cdot,v\rangle\|_{\psi_2}\gr w(\Psi_2(K))+Ct
\}\right )\ls \exp (-t^2/2)\end{equation*}
for every $t>0$.

Define
\[V_t=\big\{v\in S^{n-1}:\|\langle \cdot, v\rangle\|_{\psi_2}\ls w(\Psi_2(K))+Ct\big\}.\]
Then, $\sigma(V_t)\gr 1-\exp(-t^2/2)$, which is precisely \eqref{eq:onb-bound}.
\end{proof}

The next proposition shows that a random orthonormal basis of $\mathbb{R}^n$ consists entirely of subgaussian directions,
with subgaussian constants growing at most logarithmically in the dimension.

\begin{proposition}\label{prop:spherical}
There exists an absolute constant $C_1>0$ such that, for every isotropic convex body $K$ in $\mathbb{R}^n$, 
$$\mathbb{E} \max_{1\ls i\ls n} \|\langle \cdot,U(e_i)\rangle \|_{\psi_2}  \ls C_1\sqrt{\ln (en)}.$$
Moreover, for any $\delta\in (0,1)$, a random orthonormal basis $\{v_1,\ldots ,v_n\}$ of $\mathbb{R}^n$ satisfies with probability at least $1-\delta$ 
\begin{equation}\label{eq:random-basis}
\|\langle\cdot,v_i\rangle\|_{\psi_2}\ls w(\Psi_2(K))+C_1\sqrt{\ln(2n/\delta)}\ls C_2\sqrt{\ln(2n/\delta)},\qquad i=1,\ldots ,n.
\end{equation}
where $C_2>0$ is an absolute constant. 
\end{proposition}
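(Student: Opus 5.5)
The plan is to combine the concentration estimate of Theorem~\ref{th:spherical} (really its underlying Lipschitz deviation bound) with a union bound over the $n$ columns of a Haar-random orthogonal matrix. The key observation is that for $U$ distributed according to $\nu$, each vector $U(e_i)$ is uniformly distributed on $S^{n-1}$. So the marginal law of each column is $\sigma$, and only a union bound is needed; no independence between the columns is required.

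\textbf{Step 1: tail bound for the maximum.} Set $f(v)=\|\langle\cdot,v\rangle\|_{\psi_2}=h_{\Psi_2(K)}(v)$. By Corollary~\ref{cor:radius}, $f$ is $C\sqrt n$-Lipschitz, and $\mathbb E f=w(\Psi_2(K))$. As in the proof of Theorem~\ref{th:spherical}, Theorem~\ref{th:sphere-expectation} with $b=C\sqrt n$ and $s=t/\sqrt n$ gives
\[\sigma\bigl(\{v:f(v)\gr w(\Psi_2(K))+Ct\}\bigr)\ls e^{-t^2/2}.\]
Taking the union bound over the $n$ columns yields
\[\nu\Bigl(\max_{i\ls n} f(U e_i)\gr w(\Psi_2(K))+Ct\Bigr)\ls n e^{-t^2/2}.\]

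\textbf{Step 2: the high-probability statement.} Choose $t=\sqrt{2\ln(n/\delta)}\ls \sqrt{2\ln(2n/\delta)}$. The right-hand side of the tail bound is then at most $\delta$, which gives the first inequality of \eqref{eq:random-basis} with $C_1=\sqrt2\,C$. For the second inequality, Theorem~\ref{th:1.3} gives $w(\Psi_2(K))\ls C\sqrt{\ln(en)}$. Since $\delta<1$, we have $\ln(en)\ls 1+\ln n\ls c\ln(2n/\delta)$ (using $\ln 2>0$ for $n=1$), and the second inequality follows.

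\textbf{Step 3: the expectation bound.} Write $X=\max_i f(Ue_i)$ and $W=w(\Psi_2(K))$. Then
\[\mathbb E X\ls W+C\sqrt{2\ln n}+\int_{C\sqrt{2\ln n}}^\infty \mathbb P(X\gr W+u)\,du.\]
The tail integral is at most $\int_{\sqrt{2\ln n}}^\infty C n e^{-t^2/2}\,dt$. Substituting $t=\sqrt{2\ln n}+r$ and using $e^{-t^2/2}\ls n^{-1}e^{-r^2/2}$, this is bounded by an absolute constant. Together with $W\ls C\sqrt{\ln(en)}$, this gives $\mathbb E X\ls C_1\sqrt{\ln(en)}$.

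\textbf{Main obstacle.} None of these steps is genuinely difficult, because the Lipschitz constant $C\sqrt n$ from Corollary~\ref{cor:radius} is exactly cancelled by the $n$ in the concentration exponent. The only points requiring care are the bookkeeping in the tail integral for small $n$ (for $n=1$ the bound is trivial) and making sure the constants in the union bound match the form $\sqrt{\ln(2n/\delta)}$.
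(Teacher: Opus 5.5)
Your proposal is correct and follows essentially the same route as the paper. Each column $U(e_i)$ is uniform on $S^{n-1}$, so the Lipschitz deviation bound behind Theorem~\ref{th:spherical} (with constant $C\sqrt{n}$ from Corollary~\ref{cor:radius}) plus a union bound over the $n$ columns gives the high-probability statement, and integrating the tail $n e^{-t^2/2}$ beyond a threshold of order $\sqrt{\ln(en)}$ gives the expectation bound. Your shift $t=\sqrt{2\ln n}+r$ in the tail integral is slightly cleaner bookkeeping than the paper's threshold $t_0\gr w(\Psi_2(K))+2C\sqrt{2\ln(en)}$, but the argument is the same.
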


\begin{proof}
Let $U \in O(n)$ and define $f_i(U)=\|\langle \cdot,U(e_i)\rangle \|_{\psi_2}$.  Then 
\begin{align*}
\mathbb{E}\max_{1\ls i\ls n} \|\langle \cdot,U(e_i)\rangle \|_{\psi_2}&=\int_0^{\infty} \nu(\{U \in O(n) : \max_{1\ls i\ls n}f_i(U)\geqslant t\})\, dt \\& \ls t_0+n\int_{t_0}^{\infty} \nu(\{U \in O(n) :f_1(U)\geqslant t\}) \, dt \\& = t_0+n\int_{t_0}^{\infty} \sigma(\{v\in S^{n-1} : h_{\Psi_2 (K)}(v)\geqslant t\}) \, dt,
\end{align*}
where $t_0$ will be chosen later. Assume that $$t_0\geqslant 2C\sqrt{2\ln(en)}+ w(\Psi_2 (K)),$$ where  $C>0$ is the absolute constant in Theorem~\ref{th:spherical}. By the change of variables $t=w(\Psi_2(K))+Cs$
we obtain  
$$\int_{t_0}^{\infty} \sigma(\{v\in S^{n-1} : h_{\Psi_2 (K)}(v)\geqslant t\}) \, dt= C\int_{\frac{1}{C}(t_0-w(\Psi_2(K)))}^{\infty} \sigma(\{v\in S^{n-1} : h_{\Psi_2 (K)}(v)\geqslant w(\Psi_2(K))+Cs\}) \, ds.$$
 Then, for every $s>0$ there exists $V_s\subset S^{n-1}$ such that $\sigma(V_s)\geqslant 1-\exp{(-s^2/2)}$ 
and for every $v \in V_s$ we have that $h_{\Psi_2(K)}(v)\ls w(\Psi_2(K))+Cs$. Thus 
$$\sigma(\{v\in S^{n-1} : h_{\Psi_2 (K)}(v)\geqslant w(\Psi_2(K))+Cs\}) \ls \sigma(S^{n-1}\setminus V_s)\ls \exp{(-s^2/2)}$$
Using the change of variables $t=2s\sqrt{2\ln (en)}$ we obtain
\begin{align*}
\int_{\frac{1}{C}(t_0-w(\Psi_2(K)))}^{\infty}\exp{(-t^2/2)}\,dt &\ls \int_{2\sqrt{2 \ln(en)}}^{\infty}\exp{(-t^2/2)}\,dt\\
&= 2\sqrt{2\ln (en)}\int_1^{\infty} \exp{(-4s^2\ln (en))} \, ds\\
&\ls 2\sqrt{2\ln (en)}\int_1^{\infty} \exp{(-4s\ln (en))}\, ds \\
& \ls \frac{2}{e^3}\sqrt{2\ln (en)}\, n^{-3}\int_1^{\infty} e^{-s} \, ds = \frac{2}{e^4}n^{-3}\sqrt{2\ln (en)},
\end{align*}
where we used the inequality $\exp{(-4s\ln (en))}\ls e^{-s}\exp{(-3\ln (en))}$ which holds for every $s\geqslant 1$.  Finally, since $w(\Psi_2(K))\ls C\sqrt{\ln (en)}$, we may choose $t_0=C_0\sqrt{\ln (en)}$ where $C_0>0$ is a sufficiently large absolute constant.
Therefore, combining the above estimates, we obtain
$$\mathbb{E}\max_{1\ls i\ls n} \|\langle\cdot,U(e_i)\rangle\|_{\psi_2} \ls C_1\sqrt{\ln (en)}.$$    

For the second claim, let $t>0$ and consider the set $V_t$ from Theorem~\ref{th:spherical}. Fix an orthonormal basis $\{e_1,\ldots ,e_n\}$ and observe that, for every $i=1,\ldots ,n$,
\[\nu(\{U\in O(n):U(e_i)\notin V_t\})\ls \exp(-t^2/2).\]
If $n\exp(-t^2/2)<\delta$ then $\{U(e_1),\ldots ,U(e_n)\}\subset V_t$ with probability greater than $1-\delta$. This shows that
\[\nu(\{U\in O(n):\max_{1\ls i\ls n}\|\langle \cdot,U(e_i)\rangle \|_{\psi_2}\ls w(\Psi_2(K))+(C/\sqrt{c})\sqrt{\ln(2n/\delta)}\})\gr 1-\delta.\]
In other words, a random orthonormal basis $\{v_1,\ldots ,v_n\}$ of $\mathbb{R}^n$ satisfies \eqref{eq:random-basis} with probability at least $1-\delta$.
\end{proof}

We can also show that if $K$ is isotropic then, for any $1\ls k\ls n-1$, a random $F\in G_{n,k}$ is ``subgaussian with constant $\sqrt{k}$ for $K$" in the sense
that $\|\langle\cdot ,v\rangle\|_{\psi_2}\ls C\max\{w(\Psi_2(K)),\sqrt{k}\}$ {\it for all} $v\in S^{n-1}\cap F$. 

\begin{proposition}\label{prop:1.2}Let $K$ be an isotropic convex body in $\mathbb{R}^n$. For every $1\ls k\ls n-1$ there exists a subset $\Gamma_k\subseteq G_{n,k}$ with
$\nu_{n,k}(\Gamma_k)\gr 1-e^{-ck}$ such that for every $F\in\Gamma_k$ and every $v\in S^{n-1}\cap F$,
\[\|\langle\cdot ,v\rangle\|_{\psi_2}\ls C\max\{w(\Psi_2(K)),\sqrt{k}\}\ls C_1\max\{\sqrt{\ln(en)},\sqrt{k}\}.\]
\end{proposition}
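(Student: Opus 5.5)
This is the standard Dvoretzky–Milman / low $M^*$-estimate type argument for the support function of $\Psi_2(K)$ restricted to a random subspace. Write $L=\Psi_2(K)$, $w=w(L)$ and $R=R(L)$. By Corollary~\ref{cor:radius} we have $R\ls C\sqrt n$, and by Theorem~\ref{th:1.3} we have $w\ls C\sqrt{\ln(en)}$. For $F\in G_{n,k}$ we need to control
\[\max_{v\in S^{n-1}\cap F}h_L(v)=R(P_F(L)).\]
The classical estimate I would invoke is the following (see e.g.\ \cite[Chapter~5]{AGA-book}, Milman's/Pajor–Tomczak's estimate for the diameter of random projections). There is a set $\Gamma_k\subseteq G_{n,k}$ with $\nu_{n,k}(\Gamma_k)\gr 1-e^{-ck}$ such that for every $F\in\Gamma_k$,
\[R(P_F(L))\ls C\Bigl(w(L)+R(L)\sqrt{k/n}\Bigr).\]
Plugging in the bounds on $w$ and $R$ gives
\[R(P_F(L))\ls C\bigl(w+\sqrt{k}\bigr)\ls C'\max\{w,\sqrt k\},\]
and then the second inequality follows from Theorem~\ref{th:1.3}.

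To make the argument self-contained, I would prove that estimate directly with a net argument, using the concentration of Theorem~\ref{th:sphere-expectation} for the function $f(v)=h_L(v)$. This function is $R$-Lipschitz with $R\ls C\sqrt n$.

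1. Fix $F_0\in G_{n,k}$ and a $1/2$-net $N$ of $S^{n-1}\cap F_0$ with $|N|\ls 5^k$.
2. For a random $U\in O(n)$ and each $x\in N$, the point $Ux$ is uniform on $S^{n-1}$. Apply Theorem~\ref{th:sphere-expectation} with deviation $bs=Ct$, where $b=C\sqrt n$ and $s=t/\sqrt n$:
\[\nu\bigl(h_L(Ux)\gr w+Ct\bigr)\ls e^{-t^2/2}.\]
3. Choose $t=A\sqrt k$ with $A$ large and take a union bound over the net. With probability at least $1-5^ke^{-A^2k/2}\gr 1-e^{-ck}$, every point of the net satisfies $h_L(Ux)\ls w+CA\sqrt k$.
4. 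Pass from the net to the whole sphere by the standard convexity argument. Since $h_L$ is a norm on $\R^n$ (as $L$ is symmetric), every $v\in S^{n-1}\cap F_0$ can be written as $v=\sum_j \epsilon_j x_j$ with $x_j\in N$ and $\sum_j|\epsilon_j|\ls 2$. Hence
\[\max_{v\in S^{n-1}\cap UF_0}h_L(v)\ls 2\bigl(w+CA\sqrt k\bigr).\]
5. Define $\Gamma_k=\{UF_0\}$ for those $U$ satisfying the bound, and push forward the Haar measure on $O(n)$ to $\nu_{n,k}$.

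I do not expect any step to be a real obstacle. The only point needing care is that the Lipschitz constant from Corollary~\ref{cor:radius} is $\sqrt n$. This is exactly what converts the $e^{-s^2n/2}$ tail into deviation $t$ at cost $e^{-t^2/2}$, and the union bound over $5^k$ points then forces the threshold $t\approx\sqrt k$. That is why the bound is $\max\{w,\sqrt k\}$ rather than $w$ alone.
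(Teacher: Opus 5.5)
Your proposal is correct and follows the paper's proof. The paper likewise combines $R(\Psi_2(K))\ls C\sqrt n$ from Corollary~\ref{cor:radius} with V.~Milman's bound for the diameter of random projections (Proposition~\ref{prop:diam-rdm-proj}), and then applies Theorem~\ref{th:1.3}. Your net-plus-concentration argument is a correct self-contained proof of that cited proposition, which the paper simply quotes. It is cleanest to define $\Gamma_k$ directly as the set of $F$ with $\max_{v\in S^{n-1}\cap F}h_L(v)\ls 2(w+CA\sqrt k)$, since the preimage of this set under $U\mapsto UF_0$ contains your good event.
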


The proof of Proposition~\ref{prop:1.2} relies on a result of V.~Milman~\cite{VMilman-1990}; see also
\cite[Proposition~5.7.1]{AGA-book}, which gives a bound for the diameter of
random low-dimensional projections of a symmetric convex body.

\begin{proposition}\label{prop:diam-rdm-proj}
Let $K$ be a symmetric convex body in $\mathbb{R}^n$ satisfying $K\subseteq RB_2^n$.
For every $1\ls k\ls n-1$, there exists a subset $\Gamma_k\subseteq G_{n,k}$ with $\nu_{n,k}(\Gamma_k)\gr 1-e^{-ck}$
such that for every $F\in\Gamma_k$,
\[P_F(K)\subseteq C\max\left\{w(K),R\sqrt{k/n}\right\}B_2^n\cap F ,\]
where $c>0$ and $C>0$ are absolute constants.
\end{proposition}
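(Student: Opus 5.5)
The plan is the classical Milman argument for the diameter of random projections. The tool is concentration of Lipschitz functions on the sphere (Theorem~\ref{th:sphere-expectation}), combined with a net argument on a fixed $k$-dimensional sphere and the rotation invariance of $\nu_{n,k}$.

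First, by rotation invariance it suffices to fix $E=\mathbb{R}^k\subset\mathbb{R}^n$ and consider random images $U(E)$ with $U\sim\nu$. Since $K$ is symmetric, $P_{U(E)}(K)\subseteq \rho B_2^n$ if and only if $h_K(U\theta)\ls\rho$ for all $\theta\in S^{k-1}\subset E$. Let $N$ be a $1/2$-net of $S^{k-1}$ with $|N|\ls 5^k$. For a fixed $\theta$, the vector $U\theta$ is uniformly distributed on $S^{n-1}$. The function $h_K$ is $R$-Lipschitz, because $K\subseteq RB_2^n$, and its mean on the sphere is $w(K)$. Theorem~\ref{th:sphere-expectation} then gives
\[\nu\bigl(\{U: h_K(U\theta)\gr w(K)+Rs\}\bigr)\ls e^{-s^2n/2}.\]
Choose $s=C'\sqrt{k/n}$ with $C'$ large, so that $5^k e^{-C'^2k/2}\ls e^{-ck}$. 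A union bound over $N$ then shows that, outside a set of measure at most $e^{-ck}$,
\[h_K(U\theta)\ls w(K)+C'R\sqrt{k/n}\qquad\text{for all }\theta\in N.\]

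Second, pass from the net to the whole sphere by the standard successive approximation. Let $\rho=\max_{\theta\in S^{k-1}}h_K(U\theta)$, which is a norm maximum because $h_K\circ U$ is a seminorm on $E$. Every $\theta\in S^{k-1}$ can be written as $\theta=\theta_0+\lambda\theta'$ with $\theta_0\in N$, $\theta'\in S^{k-1}$ and $0\ls\lambda\ls 1/2$. Subadditivity of $h_K$ gives $\rho\ls\max_N h_K(U\cdot)+\rho/2$, hence $\rho\ls 2\bigl(w(K)+C'R\sqrt{k/n}\bigr)$. This is bounded by $C\max\{w(K),R\sqrt{k/n}\}$.

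Finally, let $\Gamma_k$ be the image in $G_{n,k}$ of the good set of $U$'s. Since the pushforward of $\nu$ under $U\mapsto U(E)$ is $\nu_{n,k}$, we get $\nu_{n,k}(\Gamma_k)\gr 1-e^{-ck}$, and for every $F\in\Gamma_k$ we have $P_F(K)\subseteq C\max\{w(K),R\sqrt{k/n}\}B_2^n\cap F$. The only delicate point is the choice of constants. Balancing the net cardinality $5^k$ against the concentration exponent $s^2n/2$ forces the deviation term to be exactly of order $R\sqrt{k/n}$, and that term is where the second entry in the maximum comes from. Measurability of the good set is clear, since it is defined by a continuous function of $U$.
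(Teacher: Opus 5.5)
Your proof is correct. Concentration of the $R$-Lipschitz function $h_K$ at each point of a $\tfrac12$-net, a union bound with $s\approx\sqrt{k/n}$, and the net-to-sphere bootstrap give the claimed inclusion with probability at least $1-e^{-ck}$; taking $\Gamma_k$ to be the image of the good set of $U$'s also correctly handles the fact that this set is not a union of fibres of $U\mapsto U(E)$. The paper does not prove this proposition itself but cites it from V.~Milman and from Proposition~5.7.1 of the Artstein-Avidan--Giannopoulos--Milman book, and your concentration-plus-net argument is a standard proof of that result.
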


\begin{proof}[Proof of Proposition~$\ref{prop:1.2}$]
Recall from the proof of Theorem~\ref{th:spherical} that
\[R(\Psi_2(K)) \ls C\sqrt n.\] 
Given $k$, we apply Proposition~\ref{prop:diam-rdm-proj} to the symmetric convex body
$\Psi_2(K)$ to find a subset $\Gamma_k\subseteq G_{n,k}$ with
\[\nu_{n,k}(\Gamma_k)\gr 1-e^{-ck}\]
such that for every $F\in\Gamma_k$,
\[P_F(\Psi_2(K))\subseteq C\max\left\{w(\Psi_2(K)),R(\Psi_2(K))\sqrt{k/n}\right\}B_2^n\cap F .\]
Using the estimate for the radius of $\Psi_2(K)$, we have
\[R(\Psi_2(K))\sqrt{k/n}\ls C_1\sqrt{k}.\]
Therefore,
\[P_F(\Psi_2(K))\subseteq C\max\{w(\Psi_2(K)),\sqrt{k}\}B_2^n\cap F\]
for every $F\in\Gamma_k$. Equivalently, for every $F\in\Gamma_k$ and any $v\in S^{n-1}\cap F$,
\[h_{P_F(\Psi_2(K))}(v)\ls C\max\{w(\Psi_2(K)),\sqrt{k}\}.\]
Since $h_{P_F(\Psi_2(K))}(v)=h_{\Psi_2(K)}(v)=\|\langle\cdot ,v\rangle\|_{\psi_2}$, the result follows.
\end{proof}

\medskip 

\noindent \textbf{\S~3.5. The case of unconditional bodies.}
Recall that a convex body $K$ in $\mathbb{R}^n$ is called unconditional if $x=\sum_{i=1}^n x_i e_i\in K$ implies that
$\sum_{i=1}^n\varepsilon_i x_i e_i\in K$ for every choice of signs $\varepsilon_i\in\{-1,1\}$, where
$\{e_1,\ldots,e_n\}$ is the standard orthonormal basis of $\mathbb{R}^n$.
We conclude this article by showing that unconditionality yields a particularly strong form of the previous results: one can construct an explicit orthonormal basis all of whose vectors are uniformly subgaussian directions.

We begin with the model example of the unit ball of $\ell_1^n$ equipped with the uniform probability measure $\mu_n$, whose density is
\[\frac{d\mu_n}{dx}=\frac{n!}{2^n}\mathds{1}_{B_1^n}.\]
A direct computation (see \cite{Bobkov-Nazarov-2003}) shows that, for every $v\in\mathbb{R}^n$,
\begin{equation}\label{eq:mu-n}
c_1\|v\|_\infty\ls\sqrt n\,\|\langle\cdot,v\rangle\|_{\psi_2(\mu_n)}\ls c_2\|v\|_\infty,
\end{equation}
where one may take $c_1=1/\sqrt6$ and $c_2=2\sqrt2$.

Bobkov and Nazarov \cite{Bobkov-Nazarov-2003} proved a comparison theorem implying, in particular, that the upper estimate in \eqref{eq:mu-n} extends to 
arbitrary isotropic unconditional convex bodies.

\begin{theorem}[Bobkov--Nazarov]\label{th:unco-psi-2}
Let $K$ be an isotropic unconditional convex body in $\mathbb{R}^n$. Then, for every $v\in\mathbb{R}^n$,
\[\|\langle\cdot,v\rangle\|_{\psi_2}\ls C\sqrt n\,\|v\|_\infty.\]
\end{theorem}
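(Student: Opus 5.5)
The plan is to compare the moments of $\langle\cdot,v\rangle$ on $K$ with the moments of a linear form in independent symmetric exponential variables. We then apply the representation $\|\langle\cdot,v\rangle\|_{\psi_2}\approx\sup_{1\ls p\ls n}\|\langle\cdot,v\rangle\|_p/\sqrt p$ from \eqref{eq:convex-hull}, which is valid for every centered convex body. Let $Y=(Y_1,\ldots,Y_n)$ have i.i.d.\ coordinates with density $\frac12e^{-|s|}$. We aim for the comparison
\[\|\langle\cdot,v\rangle\|_{L_p(K)}\ls C\,\|\langle Y,v\rangle\|_{p},\qquad p\gr 1,\ v\in\mathbb{R}^n .\]
Combined with the Gluskin--Kwapie\'n estimate $\|\sum v_iY_i\|_p\approx p\|v\|_\infty+\sqrt p\,|v|$, this gives $\|\langle\cdot,v\rangle\|_p/\sqrt p\ls C(\sqrt p\|v\|_\infty+|v|)$. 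For $1\ls p\ls n$ the right-hand side is at most $2C\sqrt n\|v\|_\infty$, because $|v|\ls\sqrt n\|v\|_\infty$. This is the claim.

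\textbf{Step 1: orthant tail estimate.} This is the geometric input and the step we expect to be the main obstacle. We want to show that there is an absolute constant $c>0$ such that for every isotropic unconditional $K$ and all $t_1,\ldots,t_n\gr 0$,
\[\lambda_K\bigl(\{x:|x_i|\gr t_i\ \text{for all }i\}\bigr)\ls \exp\Bigl(-c\sum_{i=1}^n t_i\Bigr).\]
The first attempt is to use $L_K\approx1$ together with unconditionality and convexity of $K_+=K\cap\mathbb{R}_+^n$. The set $\{x\in K_+:x\gr t\}$ sits inside $K_+$ in a "corner", so convexity should force its relative volume to decay in each coordinate direction. Concretely, the function $t\mapsto \vol_n(K_+\cap\{x\gr t\})$ is log-concave on $\mathbb{R}_+^n$ and coordinatewise decreasing. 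Its partial derivative at $0$ in direction $e_i$ is controlled by the $(n-1)$-dimensional section $K\cap e_i^\perp$, whose volume is $\approx 1/L_K\approx 1$ for isotropic bodies. Log-concavity then converts this slope bound at the origin into the exponential bound $\exp(-\langle a,t\rangle)$ with all $a_i\gr c$. This is essentially the Bobkov--Nazarov lemma, and we would reproduce their argument in this form.

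\textbf{Step 2: moment comparison.} By unconditionality, $x\mapsto(\varepsilon_i|x_i|)_i$ with independent random signs $\varepsilon_i$ has the same law as $x$ under $\lambda_K$. Fix an even integer $p=2q$ and expand $\mathbb{E}_\varepsilon|\sum\varepsilon_iv_i|x_i||^{2q}$. The result is a polynomial in $v_i^2x_i^2$ with nonnegative coefficients, which depend only on the exponents and are identical for $K$ and for $Y$. For each monomial the layer-cake formula gives
\[\int_K\prod_i|x_i|^{m_i}\,dx=\int_{\mathbb{R}_+^n}\prod_i m_it_i^{m_i-1}\,\lambda_K(|x_i|\gr t_i\ \forall i)\,dt .\]
Step 1 bounds this by $\prod_i\mathbb{E}|Y_i/c|^{m_i}$. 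Summing the monomials yields $\|\langle\cdot,v\rangle\|_{2q}\ls c^{-1}\|\langle Y,v\rangle\|_{2q}$. Non-even $p$ are handled by monotonicity of $L_p$-norms, at the cost of a constant.

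\textbf{Step 3: conclusion.} Insert the Gluskin--Kwapie\'n bound for sums of independent symmetric exponentials and take $\sup_{1\ls p\ls n}$, as described in the first paragraph.
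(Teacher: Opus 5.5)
Your proposal is correct and is essentially the Bobkov--Nazarov argument that the paper cites without proof. It has the same two ingredients: an orthant-tail estimate obtained from concavity of $t\mapsto\lambda_K(\{x\gr t\})$ combined with the lower bound on the coordinate sections, and a moment comparison for unconditional vectors obtained by expanding even moments into monomials with nonnegative coefficients. The only difference is the comparison measure: Bobkov and Nazarov use $1/n$-concavity (Brunn--Minkowski) to get the sharper bound $\bigl(1-\sum_i t_i/(cn)\bigr)_+^n$, i.e.\ a comparison with the normalized cross-polytope followed by \eqref{eq:mu-n}, whereas your log-concave exponential bound plus Gluskin--Kwapie\'n (or simply Bernstein) loses boundedness for $p>n$, which is harmless because you truncate at $p\ls n$.
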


The next elementary lemma provides a convenient orthonormal basis adapted to the $\ell_\infty$-norm.

\begin{lemma}\label{lem:fourier-basis}
For every $n\ge1$, there exists an orthonormal basis $\{v_0,\ldots,v_{n-1}\}$ of $\mathbb{R}^n$ such that
\[\|v_i\|_\infty\ls\frac{\sqrt2}{\sqrt n},\qquad i=0,\ldots,n-1.\]
\end{lemma}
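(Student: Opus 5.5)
We will use a real version of the discrete Fourier basis. The natural candidates are the vectors
\[
\frac{1}{\sqrt n}\bigl(\cos(2\pi jt/n)\bigr)_{t=0}^{n-1},\qquad \frac{1}{\sqrt n}\bigl(\sin(2\pi jt/n)\bigr)_{t=0}^{n-1}.
\]
These are the real and imaginary parts of the complex characters $\chi_j(t)=e^{2\pi i jt/n}/\sqrt n$, which form an orthonormal basis of $\mathbb{C}^n$. A cleaner option, which avoids separating parities of $n$, is the Hartley basis:
\[
v_j(t)=\frac{1}{\sqrt n}\Bigl(\cos\frac{2\pi jt}{n}+\sin\frac{2\pi jt}{n}\Bigr),\qquad j,t=0,\ldots,n-1.
\]
Since $|\cos\theta+\sin\theta|=\sqrt2\,|\sin(\theta+\pi/4)|\ls\sqrt2$, the sup-norm bound $\|v_j\|_\infty\ls\sqrt2/\sqrt n$ is immediate. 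So the whole task reduces to proving orthonormality.

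To prove orthonormality we compute $\langle v_j,v_k\rangle$. With $\theta_j=2\pi j t/n$, the product expands as
\[
(\cos\theta_j+\sin\theta_j)(\cos\theta_k+\sin\theta_k)=\cos(\theta_j-\theta_k)+\sin(\theta_j+\theta_k).
\]
Summing over $t$, we then use the standard identities
\[
\sum_{t=0}^{n-1}e^{2\pi i m t/n}=\begin{cases} n, & m\equiv0 \pmod n,\\ 0, & \text{otherwise.}\end{cases}
\]
Taking real and imaginary parts gives $\sum_t\cos(2\pi mt/n)=n\,\mathds{1}_{m\equiv0}$ and $\sum_t\sin(2\pi mt/n)=0$ for every integer $m$. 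The sine sum vanishes even when $m\equiv 0$, because then every term is $\sin$ of a multiple of $2\pi$. Hence
\[
\langle v_j,v_k\rangle=\frac1n\Bigl(n\,\mathds{1}_{j\equiv k}+0\Bigr)=\delta_{jk}
\]
for $0\ls j,k\ls n-1$. This gives $n$ orthonormal vectors in $\mathbb{R}^n$, and therefore a basis.

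The only point requiring care is that the Hartley trick $\cos+\sin$ really yields a real orthonormal system for all $n$, including the self-conjugate frequencies $j=0$ and $j=n/2$. The computation above handles these uniformly, since it never pairs $j$ with $n-j$.

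As a fallback, one can build the basis from the pairs $\sqrt{2/n}\cos$ and $\sqrt{2/n}\sin$ for $1\ls j<n/2$, together with $1/\sqrt n$ times the constant vector and, when $n$ is even, $1/\sqrt n$ times the alternating vector. These also satisfy the bound $\sqrt2/\sqrt n$.

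In the paper's application, combining this lemma with Theorem~\ref{th:unco-psi-2} gives $\|\langle\cdot,v_i\rangle\|_{\psi_2}\ls C\sqrt2$. That application is not part of the present statement.
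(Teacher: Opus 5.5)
Your proof is correct, but it uses a different basis from the paper's. The paper takes $v_0=u_0$, $v_{2k-1}=\sqrt2\,\mathrm{Re}(u_k)$ and $v_{2k}=\sqrt2\,\mathrm{Im}(u_k)$ for $1\ls k\ls\lfloor (n-1)/2\rfloor$, plus the alternating vector $u_{n/2}$ when $n$ is even. This is exactly your fallback. The paper then simply cites ``the standard orthogonality relations'' for the discrete Fourier basis. Checking these properly requires keeping one frequency from each pair $\{k,n-k\}$ and handling the self-conjugate frequencies $k=0$ and $k=n/2$ separately, which is why the paper splits by parity.

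Your Hartley basis $\frac{1}{\sqrt n}\,\mathrm{cas}(2\pi jt/n)$ avoids this. The identity $(\cos\theta_j+\sin\theta_j)(\cos\theta_k+\sin\theta_k)=\cos(\theta_j-\theta_k)+\sin(\theta_j+\theta_k)$ reduces orthonormality to the single fact $\sum_t e^{2\pi i mt/n}\in\{0,n\}$, uniformly in $j$, $k$ and $n$. The sup-norm bound then comes from $|\cos\theta+\sin\theta|\ls\sqrt2$ rather than from the normalization factor $\sqrt2$.

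The two constructions are rotations of each other. For $1\ls j<n/2$, your $v_j$ and $v_{n-j}$ equal $\frac{1}{\sqrt2}(c_j\pm s_j)$, where $c_j=\sqrt{2/n}\,\cos(2\pi jt/n)$ and $s_j=\sqrt{2/n}\,\sin(2\pi jt/n)$ are the paper's cosine and sine vectors. For $j=0$ and $j=n/2$ the two bases give the same vectors. So your version gives a self-contained, case-free orthonormality check with the same constant.
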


\begin{proof}
Let $\{u_0,\ldots,u_{n-1}\}$ denote the standard discrete Fourier basis of $\mathbb{C}^n$, given by
\[(u_k)_m=\frac1{\sqrt n}e^{2\pi i km/n},\qquad 0\ls k,m\ls n-1.\]
For $1\ls k\ls\lfloor(n-1)/2\rfloor$, define
\[v_{2k-1}=\sqrt2\,\mathrm{Re}(u_k),\qquad v_{2k}=\sqrt2\,\mathrm{Im}(u_k),\]
and set
\[v_0=u_0.\]
If $n$ is even, define in addition
\[v_{n-1}=u_{n/2}=\frac1{\sqrt n}(1,-1,1,-1,\ldots,-1).\]

The standard orthogonality relations for the discrete Fourier basis imply that these vectors form an orthonormal basis of $\mathbb{R}^n$.

Moreover,
\[(v_{2k-1})_m=\sqrt{\frac2n}\cos\!\left(\frac{2\pi km}{n}\right),\qquad (v_{2k})_m=\sqrt{\frac2n}\sin\!\left(\frac{2\pi km}{n}\right),\]
so every coordinate of $v_{2k-1}$ and $v_{2k}$ has absolute value at most $\sqrt{2/n}$. Since
\[\|v_0\|_\infty=\frac1{\sqrt n},\]
and the same estimate holds for $v_{n-1}$ when $n$ is even, every basis vector satisfies
\[\|v_i\|_\infty\ls\frac{\sqrt2}{\sqrt n},\]
as claimed.
\end{proof}

Combining the previous two results immediately yields the following consequence.

\begin{corollary}
Let $K$ be an isotropic unconditional convex body in $\mathbb{R}^n$. Then there exists an explicit orthonormal basis
$\{v_0,\ldots,v_{n-1}\}$ of $\mathbb{R}^n$ such that
\[\|\langle\cdot,v_i\rangle\|_{\psi_2}\ls C,\qquad i=0,\ldots,n-1.\]
\end{corollary}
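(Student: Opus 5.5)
The plan is to combine the Bobkov--Nazarov estimate (Theorem~\ref{th:unco-psi-2}) with the flat orthonormal basis of Lemma~\ref{lem:fourier-basis}. The key point is that Theorem~\ref{th:unco-psi-2} is stated for the standard coordinate system in which $K$ is unconditional. So the basis must be taken with respect to those coordinates, not in some rotated frame. Since $K$ is assumed both isotropic and unconditional in the same coordinates, no further normalization is needed.

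First, we fix the standard basis $\{e_1,\ldots,e_n\}$ with respect to which $K$ is unconditional. We then take the basis $\{v_0,\ldots,v_{n-1}\}$ from Lemma~\ref{lem:fourier-basis}, written in these coordinates. This basis is explicit: the normalized real and imaginary parts of the discrete Fourier vectors, together with $u_0$ and, when $n$ is even, the alternating vector $u_{n/2}$. It satisfies $\|v_i\|_\infty\ls\sqrt{2/n}$ for every $i$.

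Second, we apply Theorem~\ref{th:unco-psi-2} to each $v_i$:
\[\|\langle\cdot,v_i\rangle\|_{\psi_2}\ls C\sqrt n\,\|v_i\|_\infty\ls C\sqrt n\cdot\frac{\sqrt2}{\sqrt n}=\sqrt2\,C.\]
This bound is uniform in $i$ and in $n$, which gives the corollary with constant $\sqrt2\,C$.

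There is no real obstacle here; all the substance is in Theorem~\ref{th:unco-psi-2}. The only point to check is the one noted above, namely that the $\ell_\infty$-norm in the Bobkov--Nazarov bound refers to the coordinates of unconditionality, and that is exactly where we apply Lemma~\ref{lem:fourier-basis}. It is also worth remarking that, since $\|\langle\cdot,v_i\rangle\|_2=L_K\approx1$, these directions are subgaussian with an absolute constant in the sense of \eqref{eq:subgaussian}. This gives the upper half of the Letwin--Mikulincer conjecture for unconditional isotropic bodies.
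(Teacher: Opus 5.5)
Your proof is correct and is exactly the paper's argument: you take the Fourier-type basis of Lemma~\ref{lem:fourier-basis}, written in the coordinates of unconditionality, and apply Theorem~\ref{th:unco-psi-2} to get $\|\langle\cdot,v_i\rangle\|_{\psi_2}\ls C\sqrt n\,\|v_i\|_\infty\ls \sqrt2\,C$. You are also right to describe this as only the upper half of the Letwin--Mikulincer conjecture, since the corollary establishes only the $\psi_2$ upper bound and not the lower moment estimate.
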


\begin{proof}
Let $\{v_0,\ldots,v_{n-1}\}$ be the basis from Lemma~\ref{lem:fourier-basis}. By Theorem~\ref{th:unco-psi-2},
\[\|\langle\cdot,v_i\rangle\|_{\psi_2}\ls C_1\sqrt n\,\|v_i\|_\infty\ls C,\]
which proves the claim.
\end{proof}

Thus, for isotropic unconditional convex bodies, one obtains an explicit orthonormal basis of uniformly subgaussian directions with no logarithmic loss. This contrasts with the general isotropic case, where the methods developed in this work yield bounds depending on $w(\Psi_2(K))$ and $\sqrt{\ln(en)}$.

%%%%%%%%%%%%%%%%%%%%%%%%%%%%%%%%%%%%%%%%%%%%%%%%%%%%%%%%%%%%%%%%%%%%%%%%%%%%%%%%%%%%%%%%%%%%%%%%%%%%%%%%%%%%%%%%%%%%%%%%%%%%%%%
\section{Further remarks and questions}\label{section:4}
%%%%%%%%%%%%%%%%%%%%%%%%%%%%%%%%%%%%%%%%%%%%%%%%%%%%%%%%%%%%%%%%%%%%%%%%%%%%%%%%%%%%%%%%%%%%%%%%%%%%%%%%%%%%%%%%%%%%%%%%%%%%%%%

\noindent \textbf{\S~4.1. Covering numbers.} Recall that if $A$ and $B$ are two convex bodies in $\mathbb{R}^n$, then the covering number $N(A,B)$ of $A$ by $B$ 
is the least integer $N$ for which there exist $N$ translates of $B$ whose union covers $A$. Sudakov's inequality (see~\cite[Chapter~4]{AGA-book}) asserts that 
if $K$ is a symmetric convex body in $\mathbb{R}^n$, then for every $t>0$,
\begin{equation*}\ln N(K,tB_2^n) \ls n\,\frac{w(K)^2}{t^2},\end{equation*}
where $c>0$ is an absolute constant. Theorem~\ref{th:mean-width} shows that if $K$ is isotropic, then
\begin{equation*}\ln N(\Psi_2(K),tB_2^n)\ls c n\,\frac{\ln(en)}{t^2}\end{equation*}
for all $t>0$. Note that the interesting range for $t$ is $[c,C\sqrt{n}]$ because
$cB_2^n \subseteq \Psi_2(K) \subseteq C\sqrt{n}\,B_2^n$. 

Sch\"utt~\cite{Schutt-1984} has determined the covering numbers $N(\sqrt{n}B_1^n,tB_2^n)$. He showed that
\begin{equation}\label{eq:schutt}\ln N(\sqrt{n}B_1^n,tB_2^n)\ls cn\,\frac{\ln(2t)}{t^2},\qquad 1\ls t\ls \sqrt{n}/2.
\end{equation}
Since the subgaussian body of the isotropic cross-polytope is $\approx\sqrt{n}B_1^n$, this estimate suggests the question to determine if
\begin{equation}\label{eq:optimal-covering}\ln N(\Psi_2(K),tB_2^n)\ls  cn\,\frac{\ln(c_0t)}{t^2},\qquad 1\ls t\ls C\sqrt{n},\end{equation}
for every isotropic convex body $K$ in $\mathbb{R}^n$.

Let us note that, recently, Paouris and Pathak~\cite{Paouris-Pathak-2026} proved that every convex body $K$ in $\mathbb{R}^n$ has an affine image $TK$ with
$\operatorname{vol}_n(TK)=\operatorname{vol}_n(\sqrt{n}B_1^n)$ that satisfies \eqref{eq:schutt}.

\medskip

%%%%%%%%%%%%%%%%%%%%%%%%%%%%%%%%%%%%%%%%%%%%%%%%%%%%%%%%%%%%%%%%%%%%%%%%%%%%%%%%%%%%%%%%%%%%%%%%%%%%%%%%%%%%%%%%%%%%%%%%%%%%%%%
\noindent \textbf{\S~4.2. Distribution of subgaussian directions.} For every $t\gr  1$, we define
$$S_t(K):= \left\{v\in S^{n-1}:\|\langle\cdot,v\rangle\|_{\psi_2}\ls  t\right\}.$$ Consider the function
$$\psi_K(t)=\sigma(S_t(K)).$$
Set $r:=r(\Psi_2(K))$ and $w:=w(\Psi_2(K))$. An interesting question is to give a lower bound for $\psi_K(t)$, especially when $r\ll w$  
and $t\in(r,w)$.

From the proof of Theorem~\ref{th:spherical}, we see that if $t>w(\Psi_2(K))$, then
\begin{equation*}\sigma\left(\left\{v\in S^{n-1}:\|\langle\cdot,v\rangle\|_{\psi_2}>t\right\}\right)
\ls\exp\left(-\frac{(t-w(\Psi_2(K)))^2}{2C^2}\right).\end{equation*}
Hence,
$$\psi_K(t)\gr 1-\exp\left(-c_1(t-w(\Psi_2(K)))^2\right),$$
where $c_1>0$ is an absolute constant. In particular, if $t\gr  2w(\Psi_2(K))$, then $t-w(\Psi_2(K))\gr t/2$, which implies
$$\psi_K(t)\gr 1-\exp(-c_2t^2),$$
where $c_2=c_1/4$.

A subtle question is to study $\psi_K(t)$ in the case $t<w(\Psi_2(K))$. It is instructive to examine the asymptotics for the isotropic cross-polytope
$K_1={\rm vol}_n(B_1^n)^{-1/n}B_1^n$. Then, $\|\langle\cdot,v\rangle\|_{\psi_2(K_1)}\approx\sqrt{n}\|v\|_{\infty}$.  
Let
$$M_n=\sqrt{n}\,\|V\|_\infty,$$
where $V$ is uniformly distributed on the Euclidean sphere $S^{n-1}$, and define
$$\psi(t)=\mathbb{P}(M_n\ls  t).$$
One can check that
$$\left|\mathbb{E}M_n-\sqrt{2\ln n}\right|\ls C\left(\frac{\ln\ln n}{\sqrt{\ln n}}\right),$$
where $C>0$ is an absolute constant.

Let $G=(g_1,\ldots,g_n)$ be a standard Gaussian random vector. Then $V=G/|G|$ is uniformly distributed on the sphere. Hence
$M_n=\frac{\sqrt{n}\,\max_i|g_i|}{|G|}$. Direct computation shows that if $t\gg 1$, then
$$\psi(t)\gr\left(1-\frac{c}{t}e^{-t^2/2}\right)^n,$$
and hence
$$\psi(t)\gr \exp\left(-\frac{cn}{t}e^{-t^2/2}\right).$$
Writing $t=\delta\sqrt{2\ln n}$, where $\frac{1}{\sqrt{2\ln n}}<\delta<1$, we obtain
$$\psi(\delta\sqrt{2\ln n})\gr\exp\!\left(-\frac{c}{\delta\sqrt{2\ln n}}n^{1-\delta^2}\right).$$
This extremal example raises the question whether, for any isotropic convex body $K$, one can obtain an analogous lower bound for $\psi_K(t)$ 
in the regime $r(\Psi_2(K))<t<w(\Psi_2(K))$ under the assumption that $r\ll w$. 

Note that if $K$ is the isotropic Euclidean ball $D_n$ then $r=w$ and the question becomes trivial; for every $t<w$ we have $S_t(D_n)=\varnothing$.

\medskip

%%%%%%%%%%%%%%%%%%%%%%%%%%%%%%%%%%%%%%%%%%%%%%%%%%%%%%%%%%%%%%%%%%%%%%%%%%%%%%%%%%%%%%%%%%%%%%%%%%%%%%%%%%%%%%%%%%%%%%%%%%%%%%%
\noindent \textbf{\S~4.3. Gaussian width.} In this paragraph we make some observations about the Gaussian mean width of $S_t(K)$ for values of $t$ that are below $w(\Psi_2(K))$.

If $S$ is a closed subset of $S^{n-1}$, we set
$$w_G(S):=\mathbb{E}\left(\max_{x\in S}\sum_{j=1}^n x_jg_j\right),$$
where $g_j$, $1\ls  j\ls  n$, are independent standard Gaussian random variables.

Define
$$a_k:=\mathbb{E}\left[\left(\sum_{i=1}^k g_i^2\right)^{1/2}\right]=\sqrt{2}\,\frac{\Gamma((k+1)/2)}{\Gamma(k/2)}=\sqrt{k}\left(1-\frac{1}{4k}+\frac{1}{32k^2}
+O(k^{-3})\right)$$
for $k=1,\ldots,n$. Gordon's escape theorem states that if
$$a_k>w_G(S)$$
for some $k$, then there exists a subspace $F\in G_{n,n-k}$ such that
$F\cap S=\varnothing$.

\begin{proposition}\label{prop:gaussian-width}
There exists an absolute constant $C_0>0$ such that the following holds. Let $K$ be an isotropic convex body in $\mathbb{R}^n$ and define
$S_t:=S_t(K):=\left\{v\in S^{n-1}:\|\langle\cdot,v\rangle\|_{\psi_2}\ls  t\right\}$,\; $t>0$. Assume that $t\gr  2C_0$. If
$$k\gr  k_t:=\frac{4}{3}\,\left\lceil C_0^2\frac{n}{t^2}\right\rceil,$$
then, for every $E\in G_{n,k}$,
$$w_G(S_t\cap E)\gr  \frac{1}{2}\sqrt{k}.$$
In particular,
$$w_G(S_t)\gr  \frac{1}{2}\sqrt{n}.$$
\end{proposition}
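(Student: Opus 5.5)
The plan is to argue by contradiction through Gordon's escape theorem, applied inside the subspace $E$. The contradiction will come from Theorem~\ref{th:vk-psi2-body}: a subspace $F$ that misses $S_t$ entirely forces $P_F(\Psi_2(K))$ to contain a large Euclidean ball, which is incompatible with the volume radius bound once $\dim F$ is large.

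Let $C_*$ be the constant of Theorem~\ref{th:vk-psi2-body}. We fix $C_0$ to be a suitable multiple of $C_*$, say $C_0=2C_*$, so that we have slack against the strict inequalities and rounding below.

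\textbf{Step 1 (escape inside $E$).} Fix $E\in G_{n,k}$ with $k\gr k_t$. Identify $E$ with $\mathbb{R}^k$ through an orthonormal basis. For $x\in E$ we have $\sum_j x_jg_j=\langle x,P_EG\rangle$, and $P_EG$ is a standard Gaussian vector in $E$. Hence $w_G(S_t\cap E)$ coincides with the Gaussian width of $S_t\cap E$ computed inside $E$.

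Suppose, towards a contradiction, that $w_G(S_t\cap E)<\frac12\sqrt{k}$. Choose $j\approx k/4$ so that $a_j>\frac12\sqrt k$. If $k$ is small, or the asymptotic expansion of $a_j$ does not quite reach $\frac12\sqrt k$, we take $j$ slightly larger than $k/4$ and absorb the difference into the choice of $C_0$. Gordon's escape theorem in $E\cong\mathbb{R}^k$ then gives a subspace $F\subseteq E$ with $\dim F=m:=k-j$ and $F\cap S_t=\varnothing$. By the choice of $j$ and the assumption $k\gr \frac43\lceil C_0^2n/t^2\rceil$, we have
\[m\gr \tfrac34 k-O(1)\gr \lceil C_0^2 n/t^2\rceil,\]
up to this bookkeeping.

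\textbf{Step 2 (a ball inside the projection).} The condition $F\cap S_t=\varnothing$ says that
\[h_{P_F(\Psi_2(K))}(v)=h_{\Psi_2(K)}(v)=\|\langle\cdot,v\rangle\|_{\psi_2}>t\qquad\text{for every } v\in S^{n-1}\cap F.\]
As in the argument for \eqref{eq:min}, this gives $P_F(\Psi_2(K))\supseteq t\,(B_2^n\cap F)$, and therefore ${\rm vrad}(P_F(\Psi_2(K)))\gr t$.

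\textbf{Step 3 (contradiction).} Theorem~\ref{th:vk-psi2-body} gives ${\rm vrad}(P_F(\Psi_2(K)))\ls C_*\sqrt{n/m}$. Combined with Step 2, this yields $m\ls C_*^2n/t^2<C_0^2n/t^2$, which contradicts the lower bound on $m$ from Step 1. Hence $w_G(S_t\cap E)\gr\frac12\sqrt k$.

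\textbf{Admissibility and the final claim.} The hypothesis $t\gr 2C_0$ gives $C_0^2n/t^2\ls n/4$, so $k_t\ls n$ for $n$ not too small, and the range of $k$ is nonempty. The claim $w_G(S_t)\gr\frac12\sqrt n$ follows by taking $k=n$ and $E=\mathbb{R}^n$.

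\textbf{Main obstacle.} The only delicate point is the arithmetic in Step 1. We need $j$ with $a_j>\frac12\sqrt k$ while $k-j$ still exceeds $C_0^2n/t^2$, and this is exactly where the factor $\frac43$ in $k_t$ and the freedom in choosing $C_0$ are used. I would first try $j=\lceil k/4\rceil+1$ together with a lower bound of the form $a_j\gr\sqrt{j-1/2}$, and enlarge $C_0$ if necessary to cover small values of $k$.
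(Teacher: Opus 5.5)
Your argument is correct and is the paper's proof in contrapositive form: the paper sets $m=\lceil C_0^2n/t^2\rceil$, uses Theorem~\ref{th:vk-psi2-body} and the reasoning behind \eqref{eq:min} to show that every $m$-dimensional subspace meets $S_t$, and then applies Gordon's escape theorem inside $E$ to get $w_G(S_t\cap E)\gr a_{k-m}$. Your slack $C_0=2C_*$ is genuinely useful, because $a_j<\sqrt{j}$ by Jensen's inequality, so the paper's step $a_{k-m}\gr\sqrt{k-m}$ is not literally true; with $a_j>\sqrt{j-1/2}$ and the choice $j=\lceil k/4+1/2\rceil$ your bookkeeping closes for every admissible $k\gr 2$, whereas your first guess $j=\lceil k/4\rceil+1$ would leave $m=0$ when $k=2$, and no enlargement of $C_0$ excludes that case.
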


\begin{proof}
We know that for every $1\ls  m\ls  n$ and any $F\in G_{n,m}$,
\begin{equation*}\operatorname{vrad}(P_F(\Psi_2(K)))\ls  C_0\sqrt{\frac{n}{m}},
\end{equation*}
where $C_0>0$ is an absolute constant. Let $t\gr  2C_0$ and define $m$ to be the smallest positive integer such that
$$C_0\sqrt{\frac{n}{m}} \leq t.$$
More precisely, set 
$$m=m(t)=\left\lceil C_0^2\frac{n}{t^2}\right\rceil.$$ Then $S_t\cap F\neq\varnothing$ for every $F\in G_{n,m}$. 

Let $k$ be large enough so that $k-m\gr  \frac{k}{4}$. Equivalently,
$$k\gr  k_t:=\frac{4}{3}\,\left\lceil C_0^2\frac{n}{t^2}\right\rceil.$$
Consider any $E\in G_{n,k}$. Then $(S_t\cap E)\cap F\neq\varnothing$ for every $m$-dimensional subspace $F$ of $E$. Therefore, Gordon's theorem yields
$$w_G(S_t\cap E)\gr  a_{k-m}\gr  \sqrt{k-m}\gr  \frac{1}{2}\sqrt{k}.$$
This proves the proposition.
\end{proof}

Combining this information with Dudley's bound (see
\cite[Exercise~8.6]{Vershynin-book}), we obtain the next fact.

\begin{proposition}\label{prop:distant-points}
With the notation of Proposition~\ref{prop:gaussian-width}, assume that
$t\gr  2C_0$ and
$$k\gr  k_t:=\frac{4}{3}\,\left\lceil C_0^2\frac{n}{t^2}\right\rceil.$$
Then, for every $E\in G_{n,k}$,
$$N(S_t\cap E,\epsilon_0 B_2^n)\gr  \exp(c_0k),$$
where $\epsilon_0,c_0$ are absolute positive constants. In particular, for every $E\in G_{n,k}$ we may find a set
$D_E\subseteq S_t\cap E$ such that
$$|D_E|\gr  \exp(c_0k)\quad\text{and}\quad |v-w|\gr \epsilon_0$$
for any pair of distinct $v,w\in D_E$.
\end{proposition}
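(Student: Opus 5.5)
The plan is to combine the lower bound $w_G(S_t\cap E)\gr\frac12\sqrt{k}$ from Proposition~\ref{prop:gaussian-width} with Dudley's entropy integral, applied to the Gaussian process $X_x=\sum_j x_jg_j$ indexed by $T:=S_t\cap E$. Two facts make this work: $T$ lies in the $k$-dimensional sphere $S^{n-1}\cap E$, and it has diameter at most $2$. Hence every covering number $N(T,\epsilon B_2^n)$ is bounded both by $(3/\epsilon)^k$ and by $1$ once $\epsilon\gr 2$. If $N(T,\epsilon_0B_2^n)$ were small, the Dudley integral would be small, which contradicts the lower bound.

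First I would write Dudley's bound in the form
\[w_G(T)\ls C_D\int_0^{2}\sqrt{\ln N(T,\epsilon B_2^n)}\,d\epsilon .\]
Note that covering $T$ by Euclidean balls in $\mathbb{R}^n$ centered anywhere is equivalent, up to a factor $2$ in $\epsilon$, to covering with centers in $T$.

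Next I would split the integral at a small parameter $\epsilon_0$ to be chosen.

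\textbf{Small scales $\epsilon<\epsilon_0$.} Use the volumetric estimate $N(T,\epsilon B_2^n)\ls N(B_2^n\cap E,\epsilon B_2^n)\ls(3/\epsilon)^k$. This gives
\[\int_0^{\epsilon_0}\sqrt{k\ln(3/\epsilon)}\,d\epsilon\ls C\sqrt{k}\,\epsilon_0\sqrt{\ln(3/\epsilon_0)},\]
which is at most $\frac{1}{8C_D}\sqrt{k}$ once $\epsilon_0$ is a small absolute constant.

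\textbf{Large scales $\epsilon\gr\epsilon_0$.} Monotonicity gives $N(T,\epsilon B_2^n)\ls N(T,\epsilon_0B_2^n)=:N_0$. So this part of the integral is at most $2\sqrt{\ln N_0}$.

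Combining the two ranges gives
\[\frac12\sqrt k\ls C_D\Bigl(\tfrac{1}{8C_D}\sqrt k+2\sqrt{\ln N_0}\Bigr),\]
hence $\sqrt{\ln N_0}\gr\sqrt k/(16C_D)$, that is, $N_0\gr\exp(c_0k)$ with $c_0=(16C_D)^{-2}$.

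For the separated set, I would take a maximal $\epsilon_0$-separated subset $D_E\subseteq T$. By maximality, the balls of radius $\epsilon_0$ around the points of $D_E$ cover $T$. Therefore $|D_E|\gr N(T,\epsilon_0B_2^n)\gr\exp(c_0k)$. Here $\epsilon_0$ must match the one used in the covering statement, possibly after halving to pass between the two conventions.

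The main point requiring care is the small-scale term. The crude bound $\sqrt{\ln N}\ls\sqrt{k\ln(3/\epsilon)}$ has to be integrable with a prefactor tending to $0$ as $\epsilon_0\to0$, so that it can be absorbed into the constant $\frac12$. Since $\epsilon\sqrt{\ln(3/\epsilon)}\to0$, this is fine. All other steps are routine.
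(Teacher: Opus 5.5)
Your proof is correct and follows the paper's route. The paper feeds $w_G(S_t\cap E)\gr\frac12\sqrt k$ into Dudley's entropy integral, bounds the integral over scales above a constant $\epsilon_0$ by $2\sqrt{\ln N(S_t\cap E,\epsilon_0B_2^n)}$ using monotonicity, and gets the separated set from $N\ls\mathsf{M}$. The one difference is that the paper quotes Dudley's bound with the lower limit $c_1w_G(S_t\cap E)/\sqrt k$ already built in (Vershynin's exercise), whereas you justify the small-scale truncation yourself through the volumetric estimate $N(S_t\cap E,\epsilon B_2^n)\ls(3/\epsilon)^k$ in $E$, which makes your argument self-contained.
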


\begin{proof}
Let $k\gr  k_t$ and consider any $E\in G_{n,k}$. We use the fact that
$$w_G(S_t\cap E)\ls C_2\int_a^b\sqrt{\ln N(S_t\cap E,\epsilon B_2^n)}\,d\epsilon,$$
where
$$a=\frac{c_1w_G(S_t\cap E)}{\sqrt{k}}\qquad\text{and}\qquad b=\operatorname{diam}(S_t\cap E).$$
Since $w_G(S_t\cap E)\gr \frac{1}{2}\sqrt{k}$ and $\operatorname{diam}(S_t\cap E)=2$, we get
$$a\gr  \epsilon_0:=\frac{c_1}{2}\qquad\text{and}\qquad b= 2.$$
It follows that
$$\frac{1}{2}\sqrt{k}\ls w_G(S_t\cap E)\ls 2C_2\sqrt{\ln N(S_t\cap E,\epsilon_0B_2^n)}.$$
This shows that
$$N(S_t\cap E,\epsilon_0B_2^n)\gr \exp(c_0k),$$
with $c_0=\frac{1}{16C_2^2}$. The second claim follows from the fact that
$$N(A,B)\ls  \mathsf{M}(A,B)$$
for any bounded $A\subset\mathbb{R}^n$ and any symmetric convex body $B$, where
$\mathsf{M}(A,B)$ is the largest cardinality of a set $D\subseteq A$ such that
$v-w\notin B$ for distinct $v,w\in D$.
\end{proof}

Thus, even when the set of subgaussian directions is potentially small in spherical measure, it has substantial Gaussian width and contains 
exponentially large separated subsets in every sufficiently large-dimensional subspace.

%%%%%%%%%%% End of paper body %%%%%%%%%%%%%%%%%%%%%%%%%%%%%%%
\bigskip

\noindent {\bf Acknowledgement.} The third named author acknowledges support by a PhD scholarship
from the National Technical University of Athens. We would like to thank Silouanos Brazitikos and Antonios Hmadi for their constructive feedback.

\bigskip

%%%%%%%%%%%%%%%%%%%%%%%%%%%%%%%%%%%%%%%%%%%%%%%%%%%%%%%%%%%%%%%%%%%%%%%%%
%%%%%%%%%%%%%%%%%%%%%%%%%%%%%%%%%%%%%%%%%%%%%%%%%%%%%%%%%%%%%%%%%%%%%%%%%

\footnotesize
\bibliographystyle{amsplain}

\bigskip

\bigskip 

\thanks{\noindent {\bf Keywords:} Hyperplane conjecture; log-concave measures; isotropic convex bodies; subgaussian directions; volume distribution in high dimensions.

\smallskip

\thanks{\noindent {\bf 2020 MSC:} Primary 52A40; Secondary  46B06, 52A23, 60D05.}

\bigskip

\bigskip

\bigskip

\noindent \textsc{Apostolos \ Giannopoulos}: School of Applied Mathematical and Physical Sciences, National Technical University of Athens, Department of Mathematics, Zografou Campus, GR-157 80, Athens, Greece.

\smallskip

\noindent \textit{E-mail:} \texttt{apgiannop@math.ntua.gr}

\bigskip

\noindent \textsc{Minas \ Pafis}: Department of Mathematics, National and Kapodistrian University of Athens, Panepistimioupolis 157-84,
Athens, Greece.

\smallskip

\noindent \textit{E-mail:} \texttt{mipafis@math.uoa.gr}

\bigskip

\noindent \textsc{Natalia \ Tziotziou}: School of Applied Mathematical and Physical Sciences, National Technical University of Athens, Department of Mathematics, Zografou Campus, GR-157 80, Athens, Greece.

\smallskip

\noindent \textit{E-mail:} \texttt{tziotziounatalia@mail.ntua.gr}

\end{document}